\newtheorem{thm}{Theorem}
\newtheorem{cor}[thm]{Corollary}
\newtheorem{lem}[thm]{Lemma}
\newtheorem{prop}[thm]{Proposition}
\theoremstyle{definition}
\newtheorem{defn}{Definition}
\theoremstyle{remark}
\newtheorem*{rem}{Remark}
\newtheorem*{example}{Example}
\numberwithin{equation}{section}
\newcommand{\set}[1]{\left\{#1\right\}}
\newcommand{\Too}{\longrightarrow}
\newcommand{\M}{S}
\newcommand{\Or}{\mathfrak{O}}
\newcommand{\K}{\mathcal{B}}
\newcommand{\fs}{\footnotesize}
\newcommand{\sds}{separated dihedral surgery\ }
\newcommand{\covcyl}{\widetilde{\Sigma\!\times\![0,1]}}
\newcommand{\Jh}{\mathfrak{X}}
\newcommand{\genusoneknot}[6]{\left(\rule{0pt}{18pt}\begin{pmatrix} #1 & #2\\ #3 & #4 \end{pmatrix}\raisebox{-5.5pt}{\huge{,}}\normalsize \begin{pmatrix} #5\\ #6 \end{pmatrix}\right)}
\newcommand{\pZ}{\mathds{Z}_n}
\newcommand{\Dn}{D_{2n}}
\newcommand{\Cn}{\mathcal{C}_n}
\newcommand{\seq}{\Longleftrightarrow}
\def\ass{\mathrel{\mathop:}=}
\def\co{\colon\thinspace}
\begin{document}

\title[Dihedral covering links]{Surgery presentations of \\ coloured knots and of their covering links}
\author{Andrew Kricker}%
\address{Division of Mathematical Sciences, School of Mathematical and Physical Sciences, Nanyang Technological University, Singapore, 637616}%
\email{ajkricker@ntu.edu.sg}%
\urladdr{http://www.ntu.edu.sg/home/AJKricker/}

\author{Daniel Moskovich}%
\address{Osaka City University Advanced Mathematical Institute,
3-3-138 Sugimoto-cho, Sumiyoshi-ku
Osaka 558-8585 JAPAN}%
\email{dmoskovich@gmail.com}%
\urladdr{http://www.sumamathematica.com/}

\thanks{The authors would like to thank Tomotada Ohtsuki and Dror Bar-Natan for their support.}%
\subjclass{57M12, 57M25}%
\date{13th of May, 2008}%
\begin{abstract}
We consider knots equipped with a representation of their knot
groups onto a dihedral group $\Dn$ (where $n$ is odd). To each such
knot there corresponds a closed $3$--manifold, the (irregular)
dihedral branched covering space, with the branching set over the
knot forming a link in it. We report a variety of results relating
to the problem of passing from the initial data of a $\Dn$-coloured
knot to a surgery presentation of the corresponding branched
covering space and covering link. In particular, we describe
effective algorithms for constructing such presentations. A
by-product of these investigations is a proof of the conjecture that
two $\Dn$-coloured knots are related by a sequence of surgeries
along $\pm1$--framed unknots in the kernel of the representation if
and only if they have the same coloured untying invariant 
(a $\mathds{Z}_n$-valued algebraic invariant of $\Dn$-coloured
knots).
\end{abstract}
\keywords{dihedral covering, covering space, covering linkage, Fox n-colouring, surgery presentation}%
\maketitle
\section{Introduction}
The starting point for this work was the authors' desire to explore
the quantum topology of covering spaces as a means of acquiring a
deeper understanding of how quantum invariants actually encode
topological information. Recent results in the case of cyclic
covering spaces (see \textit{e.g.} \cite{GK03b,GK04}) suggest the
existence of such a theory.\par

Having understood the cyclic case, the natural next step is to
consider the branched dihedral covering spaces. These spaces have
long played an important role in knot theory, dating back to
Reidemeister's use of the linking matrix of a knot's dihedral
covering link to distinguish knots with the same Alexander
polynomial (\cite{Rei29}, see also \textit{e.g.} \cite{Per74}). More
recently they have also been used in investigations of knot
concordance (\textit{e.g.} \cite{Gil93}). In addition, branched
dihedral covers are useful in $3$--manifold topology: for example,
it turns out that every $3$--manifold is a $3$--fold branched
dihedral covering space over some knot (see \textit{e.g.}
\cite[Theorem 11.11]{BZ03}).\par

Quantum invariants for $3$--manifolds are typically constructed
using surgery presentations. To investigate the quantum topology of
covering spaces, then, it seems we need a combinatorial theory of
surgery presentations of covering spaces.\par

The cyclic case is well-known. Recall that there is a famous trick
for obtaining surgery presentations of $n$--fold cyclic covers for
any natural number $n$ (see \textit{e.g.} \cite[Chapter 6D]{Rol90}).
We wish to generalize this trick to dihedral covers, so we'll begin
by reviewing how it goes.\par

One first performs crossing changes to untie the knot by introducing
$\pm1$--framed unknots along which surgery is carried out. The
unknots are chosen to have linking number zero with the knot. After
this step, we have a surgery presentation of the given knot as a
$\pm1$--framed link $L$ lying in the complement of an unknot $U$,
where each component of $L$ has linking number zero modulo $n$ with
$U$. For the purpose of generalization, this last condition can be
restated: every component of $L$ lies in the kernel of the mod $n$
linking homomorphism $\mathrm{Link}_n\co
H_1\left(\overline{S^3-N(U)}\right)\twoheadrightarrow \mathds{Z}_n$.
Because this condition is satisfied the construction of a cyclic
cover can now be completed by lifting $L$ to the $n$--fold cyclic
cover of $S^3$ branched over $U$, which is of course again
$S^3$.\par

We would like analogous procedures for classes of covering spaces
corresponding to other groups, in particular to the dihedral groups.
The key feature which permitted construction in the cyclic case was
the existence of a knot (the unknot) which every other knot could be
transformed into via surgeries in the kernel of the mod $n$ linking
homomorphism, and whose branched cyclic cover could be constructed
explicitly.\par

To discuss how this generalizes it's worth introducing a few
definitions.

\begin{defn}[$G$--coloured knots]
For a finite group $G$ and a closed orientable $3$--manifold $M$,
define a \emph{$G$--coloured knot in $M$} to be a pair $(K,\rho)$ of
an oriented knot $K\subset M$ and a surjective representation
$\rho\co \pi_1\left(\overline{M-N(K)}\right)\twoheadrightarrow G$.
Unless otherwise specified it will be assumed that $M=S^3$.
\end{defn}

\begin{defn}[Surgery in $\ker\rho$]
Let $(K,\rho)$ be a $G$--coloured knot in a $3$--manifold $M$. If
$L\subset M-K$ is an integer--framed link each of whose components
is specified by a curve lying in $\ker\rho$ then we can perform
surgery along $L$ to obtain a new $G$--coloured knot
$(K^\prime,\rho^\prime)$ in a $3$--manifold $M^\prime$, as follows:
\begin{itemize}
\item  Remove tubular neighbourhoods
$N(L_i)$ of the components $L_i$ of $L$, and reattach them to
$\overline{M-\bigcup
N(L_i)}$ so as to match the meridional discs to the framing curves.
\item To specify the induced representation $\rho^\prime$, we must state the value it takes for an arbitrary curve $\gamma$ in $M^\prime-K^\prime$.
Homotope $\gamma$ into $M-\bigcup N(L_i)$, then evaluate it in the
restriction of $\rho$. This value is well-determined because the
components of $L$ lie in the kernel of $\rho$.
\end{itemize}
Such surgery is called \emph{surgery in $\ker\rho$}.
\end{defn}

\begin{defn}[Complete set of base-knots]
A \emph{complete set of base-knots}\footnote{The term base-knot
imitates base-point.} for a group $G$ is a set $\Psi$ of
$G$--coloured knots $(K_i,\rho_i)$ in $3$--manifolds $M_i$, such
that any $G$--coloured knot $(K,\rho)$ in $S^3$ can be obtained from
some $(K_i,\rho_i)\in \Psi$ by surgery in $\ker\rho_i$.
\end{defn}

To generalize the procedure from the cyclic case to some other group
$G$, we must find a complete set of base-knots whose desired
covering spaces (and covering links) we know how to construct
explicitly, and into whose covering spaces we know how to lift
surgery presentations for any $G$--coloured knot.\par

This paper deals with the case when $G$ is the dihedral group
$D_{2n}$ with $n$ any odd integer--- the group of permutations of
the vertices of a regular polygon with $n$ sides. Its presentation
is
\[
\Dn\ass\ \left\{t,s\left|\rule{0pt}{9.5pt}\ t^{2}=s^{n}=1,\
tst=s^{-1}\right.\right\}.
\]
\noindent As permutations on the set of vertices of the regular
polygon, these generators correspond to
\[ t = \left(
\begin{array}{rrcccr}
1 &  2 & 3 & \ldots & n-1 & n \\
1 & n  & n-1 & \ldots & 3 & 2
\end{array}
\right)
\]
\noindent and
\[
s = \left(
\begin{array}{rrrccr}
1 &  2 & 3 & \ldots & n-1 & n \\
2 & 3  & 4 & \ldots & n & 1
\end{array}
\right).
\]

\noindent Elements in $\Dn$ of the form $s^a$ are called
\emph{rotations}, and elements of the form $ts^a$ are called
\emph{reflections}. The cyclic group of rotations $\Cn\ass \langle
s\rangle$ is a normal subgroup in $\Dn$.

We'll present a $\Dn$--colouring $\rho$ of a knot $K\subset S^3$ by
labeling every arc of a knot diagram for $K$ by the image under
$\rho$ of the corresponding Wirtinger generator\footnote{\thinspace
Note that this is a different convention from the one normally used
for a Fox $n$--colouring of a knot, in which an arc which we would
label by $ts^a$ is labeled simply by $a$ (see \cite{Fox62}).}. More
generally, we can present a $\Dn$-colouring of a knot $K$ in a
closed $3$--manifold $M$ by a diagram of a link $L\cup K_1$ in $S^3$
where:
\begin{itemize}
\item $L$ is integer--framed, and surgery along $L$ turns
$(S^3,K_1)$ into $(M,K)$.
\item Every arc of the diagram is labeled by an element of $\Dn$.
\item Wirtinger relations are satisfied.
\item When the framing curve of any component of $L$ is expressed as
a product of Wirtinger generators of
$\pi_1\left(\overline{S^3-(L\cup K_1)}\right)$, the product of the
corresponding labels is $1\in\Dn$.
\end{itemize}

Our goal in this paper is to give a combinatorial procedure for
constructing surgery presentations of the irregular dihedral
branched covering space corresponding to some $\Dn$-coloured knot,
together with the covering link it contains. In Section
\ref{S:definitions} we'll recall exactly what these phrases refer
to.\par

Roughly speaking, we'll describe two approaches to this problem,
corresponding to two different complete sets of base-knots for
$\Dn$. The sets of base-knots will be introduced shortly. The
construction of their corresponding dihedral covering spaces,
covering links, and how to lift surgery presentations in the
complement of the base-knot, will be discussed in detail in
Sections \ref{S:untyingapproach} and \ref{S:Seifertapproach}.\par

\subsection*{The untying approach}

This first approach begins with exactly the same procedure for
untying knots as is used when constructing surgery presentations of
cyclic covers. It may be viewed as an adaptation of that approach to
the case of $\Dn$.

\begin{thm}\label{T:UnKnotBaseKnot}
Consider the following diagram, which depicts a $\Dn$-coloured
unknot in the $3$--manifold that results from surgery on the
disjoint $kn$--framed unknot (recall that this is the
$(kn,1)$--lens-space).
$$
\psfrag{s}[c]{$s$}\psfrag{t}[c]{$t$}\psfrag{k}[c]{$U$}\psfrag{f}[r]{framing\thinspace$=kn$}
\includegraphics[width=3in]{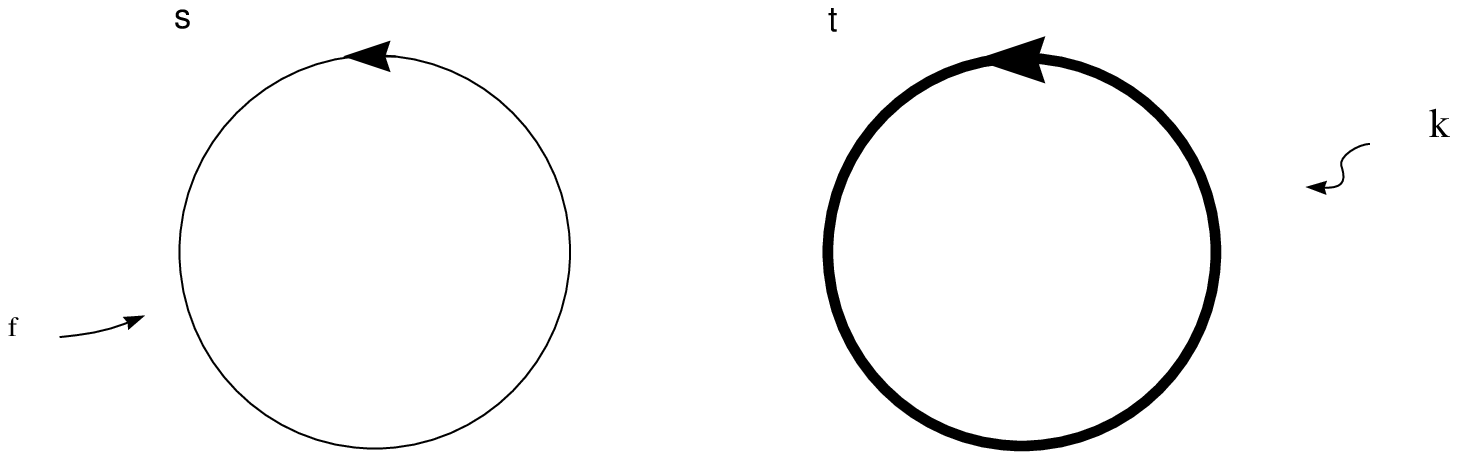}
$$
The set of these $\Dn$-coloured knots for $k=0,1,\ldots,n-1$ is a
complete set of base-knots for $\Dn$.
\end{thm}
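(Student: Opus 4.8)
The plan is to begin exactly as in the cyclic untying trick recalled above and to adapt the bookkeeping to $\Dn$. Present $(K,\rho)$ by a Fox-coloured diagram. Since $n$ is odd and $\rho$ is onto $\Dn$, all meridians are conjugate and hence map to reflections, so every arc is labelled by a reflection $ts^{c}$ and the Wirtinger relations reduce to the colouring rule $c_{\mathrm{out}}=2c_{\mathrm{over}}-c_{\mathrm{in}}\bmod n$ at each crossing. I would then run the usual unknotting procedure, recording each crossing change as a blow-down of a $\pm1$-framed, linking-number-zero unknot $c_i$ encircling the two strands of the crossing.

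The first real point is to decide which of these $c_i$ lie in $\ker\rho$. If $c_i$ encircles two (oppositely oriented) strands coloured $ts^{a}$ and $ts^{b}$, then, using that reflections are involutions, $\rho(c_i)=s^{\,b-a}$, which is trivial precisely when the two strands share a colour. Thus a same-colour crossing change is a genuine surgery in $\ker\rho$ and keeps us inside $S^3$, while a different-colour change is not. This is the crux and is genuinely new to the dihedral setting: one cannot untie $K$ using kernel surgeries alone, because the colouring would then survive onto an unknot in $S^3$, and no such $\Dn$-colouring exists ($\mathds{Z}$ cannot surject onto the non-cyclic $\Dn$). The colour must therefore be carried by curves that change the ambient manifold, and this is exactly what pushes the base-knots into the lens spaces $L(kn,1)$.

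Accordingly, my strategy is to separate the colour-carrying curves from the genuinely-kernel ones and to concentrate all of the colour onto a single standard curve. Concretely, I would introduce one rotation-coloured unknot $U$ with framing a multiple of $n$ --- the framing is forced to be $\equiv 0\bmod n$ precisely so that the framing-curve condition $s^{kn}=1$ can hold --- and use a meridian of $U$ as a reservoir of the rotation $s$. Banding each different-colour crossing-change unknot to a loop that runs an appropriate number of times around $U$ replaces $\rho(c_i)=s^{\,b-a}$ by $s^{\,b-a+r}=1$, turning every crossing change into a surgery in $\ker\rho$. Carrying this out untangles $K$ to the unknot sitting in the drawn position relative to $U$, leaving the remaining (same-colour) components as a $\pm1$-framed link $L$ in $\ker\rho$; the resulting pair is the displayed base-knot $B_k$, and surgery along $L$ recovers $(K,\rho)$.

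The main obstacle I expect is the consolidation and its bookkeeping: showing that all the separate colour-carrying unknots can be handle-slid and cancelled, compatibly with the colouring, down to the single standard $kn$-framed unknot $U$; that the accumulated framing collects to exactly $kn$ with $k$ reduced into $\{0,1,\dots,n-1\}$; and that the kernel surgeries simultaneously restore the ambient $S^3$ while producing precisely the drawn colouring rather than some other coloured knot. I would control this by an induction on the number of colour-carrying curves, each step contributing one unit to the count $k$ modulo $n$ --- the count being the coloured untying invariant, an element of $\pZ$. Since every residue is realised by the corresponding $B_k$, this exhibits an arbitrary $(K,\rho)$ as obtained from some base-knot by surgery in $\ker\rho$, proving completeness.
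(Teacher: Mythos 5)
Your overall strategy matches the paper's: untie $K$ by $\pm1$--framed crossing-change circles, observe that these cannot all lie in $\ker\rho$ (your remark that otherwise a surjection $\mathds{Z}\twoheadrightarrow\Dn$ would exist is exactly the right reason the base-knots must live in lens spaces), and concentrate all the colour onto a single rotation-labelled curve whose framing is forced to be a multiple of $n$. The genuine gap is in your mechanism for transferring the colour. Literally banding a crossing-change circle $c_i$ to a loop winding $r$ times around the distinguished curve does change its label to $s^{b-a+r}$, but it also changes the curve and hence what blowing it down does: the disc it bounds now meets the distinguished component $r$ extra times, so the surgery is no longer the intended crossing change but that change together with extra twisting of strands about the distinguished curve. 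After all such modified surgeries you do not recover $(K,\rho)$, and nothing in the proposal repairs this. The paper's device is the handleslide (Lemma \ref{L:handleslidelemma}): sliding the distinguished component $C_1$ over $C_i$ is a Kirby move, so it preserves the presented coloured knot while shifting the meridian label of $C_i$ by $s^{\mp1}$; iterating kills the label of $C_i$. The direction matters --- sliding $c_i$ over $C_1$ would change $C_1$'s label, not $c_i$'s --- and your banding is neither of these moves.

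A second gap is the production and standardization of the distinguished curve itself. It must be realized as an actual curve in the knot complement mapping to $s$ and having linking number zero with $K$ (this uses surjectivity of $\rho$ and the fact that a curve mapping to a rotation passes under an even number of arcs); it must afterwards be unknotted and unlinked from the untied $K$ by further surgeries in $\ker\rho$ (possible because its arcs carry $s^{\pm1}$, so the circles encircling its crossings evaluate to $1$); and the framing coefficient must be reduced into $\{0,\dots,n-1\}$, which the paper does by coiling the component into $n$ parallel strands and encircling them with a $\pm1$--framed kernel unknot, changing the framing by $n^2$. You flag these as bookkeeping, but your proposed induction --- each colour-carrying curve contributing one unit to $k$, identified with the coloured untying invariant --- is not how $k$ arises and would not close the argument; no appeal to $\mathrm{cu}$ is needed for this theorem.
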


It follows from the previous theorem that every $\Dn$-coloured knot
$(K,\rho)$ has a presentation of the following form, for some $0\leq
k<n$:
$$
\psfrag{s}[c]{$s$}\psfrag{t}[c]{$t$}\psfrag{k}[c]{$U$}\psfrag{f}[r]{framing\thinspace$=kn$}\psfrag{L}[c]{\large$T$}
\includegraphics[width=3.2in]{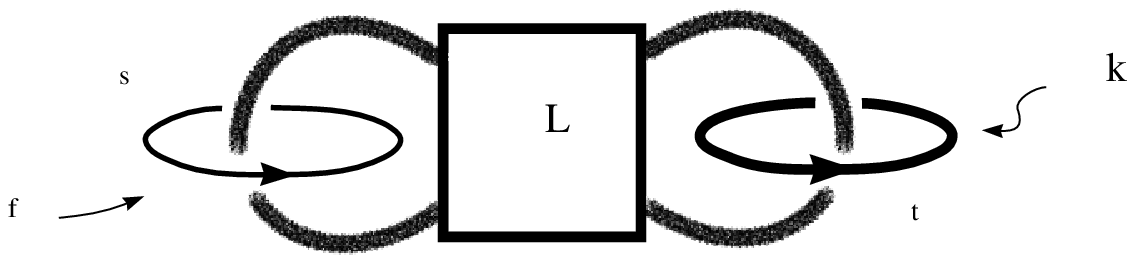}
$$

\noindent Here, the thick lines
``\raisebox{1.5pt}{\includegraphics[width=30pt]{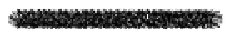}}''
denote parallel strands, $T$ is some tangle, and every component of
the framed link which results lies in $\ker\rho$ and has linking
number zero with $U$.\par

As an example, see the surgery presentation for a $D_{14}$-coloured
$5_2$ knot given in Figure \ref{F:firstfirst}.

\begin{figure}
\begin{minipage}{1.5in}
\psfrag{a}[c]{\small$t$}\psfrag{b}[c]{\small$ts^5$}\psfrag{c}[c]{\small$ts^3$}\psfrag{d}[c]{\small$ts$}\psfrag{e}[c]{\small$ts^2$}
\includegraphics[width=1.5in]{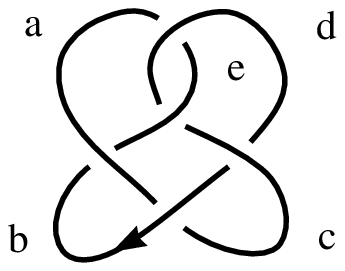}
\end{minipage}
\begin{minipage}{0.5in}
\centering
\includegraphics[width=25pt]{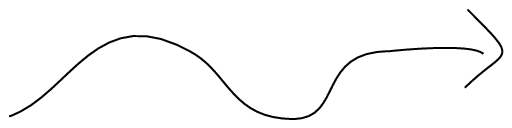}
\end{minipage}\qquad
\begin{minipage}{2.2in}
\psfrag{S}[c]{$s$}\psfrag{t}[c]{$t$}\psfrag{O}[c]{$U$}\psfrag{f}[c]{\tiny
fr\thinspace$=-7$}
\includegraphics[width=2.2in]{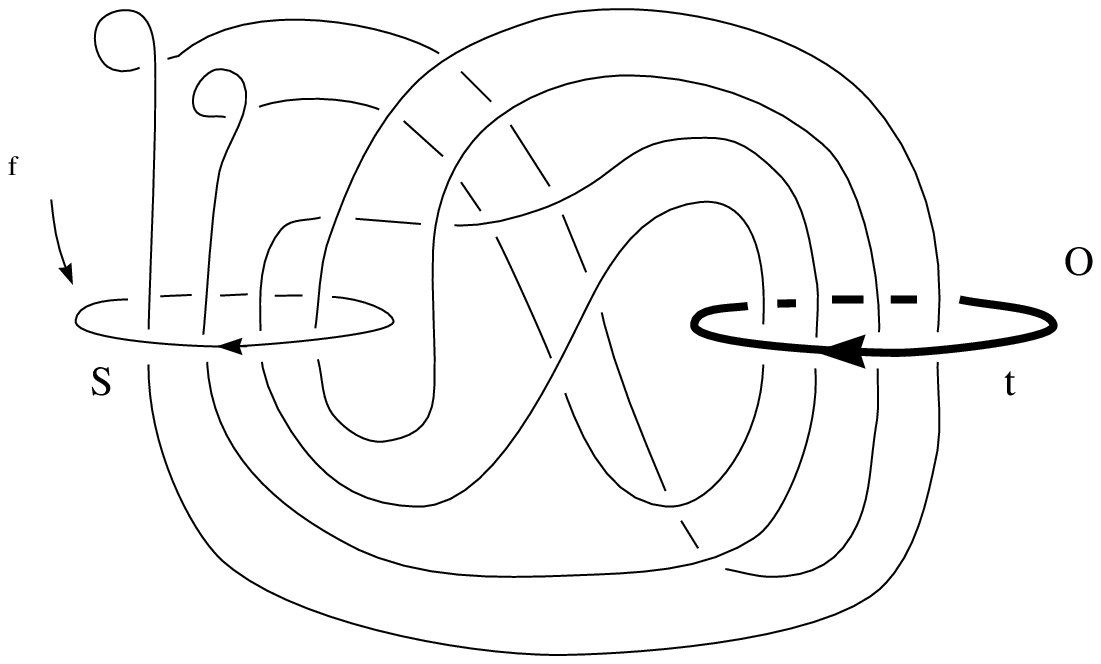}
\end{minipage}
\caption{\label{F:firstfirst}A surgery presentation for a
$D_{14}$-coloured $5_2$ knot.}
\end{figure}

\begin{thm}\label{T:untyingmethod}
A surgery presentation for the irregular dihedral branched covering
space $M$ determined by the $\Dn$-coloured knot $(K,\rho)$, and for
the covering link $\tilde{K}$ of $K$ sitting inside $M$ is as shown
in Figure \ref{F:thelink}. In that figure, a small zero near an
introduced surgery component means it has zero framing and
$\tilde{U}_1\cup\cdots\cup \tilde{U}_{\frac{n+1}{2}}$ is the
covering link of $U$, becoming $\tilde{K}$ after the surgeries are
performed.
\end{thm}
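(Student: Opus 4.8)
The plan is to lift the surgery presentation guaranteed by Theorem \ref{T:UnKnotBaseKnot} sheet-by-sheet through the $n$-fold irregular dihedral covering, using the hypothesis that the introduced surgery curves lie in $\ker\rho$ to control how they lift. First I would record the covering data of the base-knot itself. The irregular dihedral cover is the $n$-fold cover associated to the index-$n$ subgroup $\rho^{-1}(\langle t\rangle)\subset\pi_1$, so its sheets are indexed by the $n$ cosets of a reflection subgroup, and the monodromy of a loop $\gamma$ is the permutation by which $\rho(\gamma)$ acts on these cosets. Over the branch unknot $U$ a meridian maps to a reflection; since $n$ is odd, a reflection in $\Dn$ fixes exactly one of the $n$ vertices and interchanges the remaining ones in $(n-1)/2$ transpositions. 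Its preimage therefore has $1+(n-1)/2=(n+1)/2$ components, which accounts precisely for $\tilde{U}_1\cup\cdots\cup\tilde{U}_{(n+1)/2}$: one component covering $U$ homeomorphically (the genuinely branched sheet, over the fixed vertex) and $(n-1)/2$ components each double-covering $U$. I would then exhibit this piece of Figure \ref{F:thelink} explicitly, together with the lift of the framed unknot that produces the lens-space base and the zero-framed components that realise the cut-and-reglue along the branching locus.

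The second ingredient is the behaviour of the surgery link under the cover. Because each component $L_i$ lies in $\ker\rho$ its monodromy $\rho(L_i)=1$ fixes every sheet, so $L_i$ lifts to $n$ disjoint parallel copies, one in each sheet; and because $L_i$ has linking number zero with $U$ it stays away from the branch locus, where the covering map is an honest local diffeomorphism along each lift, so each copy inherits the framing of $L_i$. By naturality of the covering construction under \emph{surgery in $\ker\rho$}—the content of that definition is exactly that the representation, and hence the covering, is transported unambiguously—performing these lifted surgeries in the cover of the base-knot produces precisely the irregular dihedral branched cover $M$ of $(K,\rho)$. The clasps recorded by the tangle $T$ (whose strands are meridians of the branch circle and so lift according to the reflection monodromy) lift alongside the copies of $L_i$, and carrying out the lifted surgeries carries $\tilde{U}_1\cup\cdots\cup\tilde{U}_{(n+1)/2}$ to the covering link $\tilde{K}$. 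Assembling the lifted base data with the lifts of $T$ and of $L$ yields the picture of Figure \ref{F:thelink}.

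The main obstacle is the explicit, honest surgery description of the branched cover of the base-knot in the first step: identifying how the $kn$-framed unknot and the regluing along the Seifert data of the branch circle appear as framed components in the cover, and verifying that the zero framings advertised in Figure \ref{F:thelink} are correct. This is where the geometry of the lens-space base and the handle-by-handle bookkeeping of framings under the sheet-mixing (non-cyclic) part of the monodromy must be done with care. Once that reference computation is pinned down, the lifting of $L$ and of $T$ is essentially routine, since those curves avoid the branch set and lie in $\ker\rho$, and the framings transport unchanged.
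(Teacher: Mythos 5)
Your setup is sound and agrees in spirit with the paper: the count of $\frac{n+1}{2}$ components over $U$ (one single-sheeted, $\frac{n-1}{2}$ double-sheeted, since a reflection in $\Dn$ with $n$ odd fixes one vertex and transposes the rest) is exactly the paper's analysis of $\partial\widetilde{X}_\rho$, and the observation that each $L_i\in\ker\rho$ lifts to $n$ closed copies with framings preserved is correct and is used in the paper. (One small slip: the component over the fixed vertex is the one covered \emph{homeomorphically and without branching}; the branching of index $2$ occurs along the double-covered components, not the single-sheeted one.)

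The genuine gap is that the entire substance of the theorem is contained in the step you defer as ``the main obstacle.'' The conclusion of Theorem \ref{T:untyingmethod} is a specific framed link diagram in $S^3$ --- an $\frac{n+1}{2}$--component unlink clasped by zero-framed components, with the $n$ lifted copies of $T$ arranged in a particular chain --- and nothing in your outline produces that diagram or verifies the zero framings. The paper gets there by concrete handle calculus: it presents $X=\overline{S^3-N(K)}$ as a genus-two handlebody $H$ containing $L$ with two $2$--handles attached along curves $A$, $B$ and a $3$--ball; builds $\widetilde{H}_\rho$ explicitly by cutting $H$ into a cell and reassembling $n$ copies according to the monodromy (this is what determines how the $n$ lifts of $T$ interleave, which your phrase ``lift alongside the copies of $L_i$'' glosses over); lifts the attaching circles to $\{A_i\}$, $\{B_i\}$ and the peripheral markings to $m_i$, $l_i$; slides $A_i$ over $A_{n-i+2}$ so that each two-sheeted boundary torus $T_i$ can be filled by matching $m$ to $m_im_{n-i+2}$; invokes the Sch\"onflies theorem to discard the redundant handles $B_2,\dots,B_n$; and finally inserts auxiliary $2$--handles $E_1,\dots,E_{\frac{n+1}{2}}$ cutting the complement of the handlebody into solid tori, which is what converts the handle description into surgery on an unlink and is where the advertised zero framings come from. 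Appealing to ``naturality of the covering construction under surgery in $\ker\rho$'' correctly identifies the resulting manifold abstractly as $M$, but it does not yield the surgery presentation that the theorem asserts; without steps (iv)--(vi) above the proof is incomplete.
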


\begin{figure}
\psfrag{1}[l]{\fs$\tilde{U}_1$}\psfrag{2}[c]{\fs$\tilde{U}_2$}\psfrag{3}[c]{\fs$\tilde{U}_3$}\psfrag{4}[l]{\fs$\tilde{U}_{\frac{n+1}{2}}$}
\psfrag{M}[c]{\rotatebox{90}{$T$}}
\psfrag{a}[c]{\fs$0$}\psfrag{b}[c]{\fs$0$}\psfrag{c}[c]{\fs$0$}\psfrag{f}[c]{framing\thinspace$=k$}
\psfrag{L}[c]{\rotatebox{270}{\large$T$}}\psfrag{C}[c]{\Huge$\cdots$}
\includegraphics[width=4.2in]{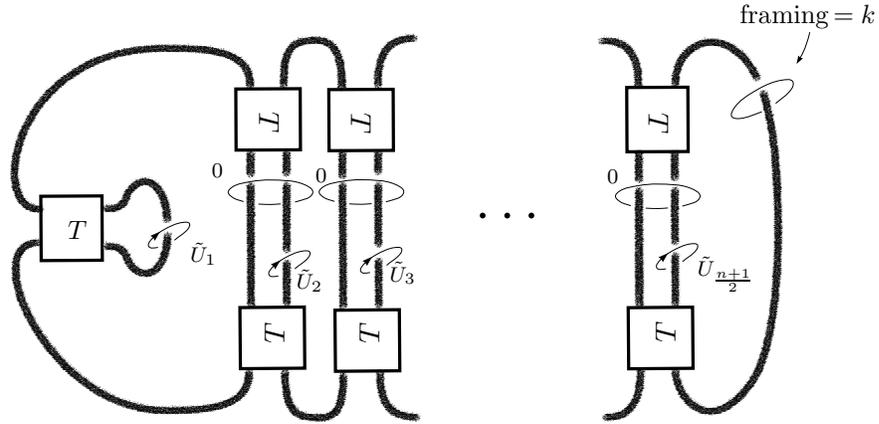}
\caption{\label{F:thelink}The surgery presentation for the covering
space in the untying approach.}
\end{figure}

\subsection*{Band projection approach}

This approach is based on a choice of band projection for a Seifert
surface $F$ of the $\Dn$-coloured knot. To a basis for $H_1(F)$
there corresponds a Seifert matrix and a \emph{colouring vector} (to
be defined in Section \ref{SSS:ColouringVector}). The colouring
vector determines the $\Dn$-colouring of the knot. The heart of this
approach will be realizing algebraic operations on the Seifert
matrix and colouring vector by sliding bands and performing
$\pm1$--framed surgeries on unknots in $\ker\rho$. While this method
seems to be less efficient in practice, it is a stronger theoretical
result because it arises from an equivalence relation on
$\Dn$-coloured knots in $S^3$ whose corresponding equivalence
classes can be detected with a certain algebraic invariant: the
coloured untying invariant.

\begin{defn}
We say that two $\Dn$-coloured knots $(K_1,\rho_1)$ and
$(K_2,\rho_2)$ in $S^3$ are $\rho$--equivalent if one can be
obtained from the other by a sequence of surgeries on $\pm1$--framed
unknots in $\ker\rho$.
\end{defn}

We alert the reader that we are restricting to surgeries along
$\pm1$--framed unknots, so that this is an equivalence relation on
$\Dn$-coloured knots in $S^3$.

This equivalence can be defined as an equivalence relation on
coloured knot diagrams without reference to surgery in the following
way:

$$
\begin{minipage}{60pt}
\psfrag{l}[c]{\Huge$\cdots$}\psfrag{a}[c]{$g_1$}\psfrag{b}[c]{$g_2$}\psfrag{c}[c]{$g_r$}
\includegraphics[height=75pt]{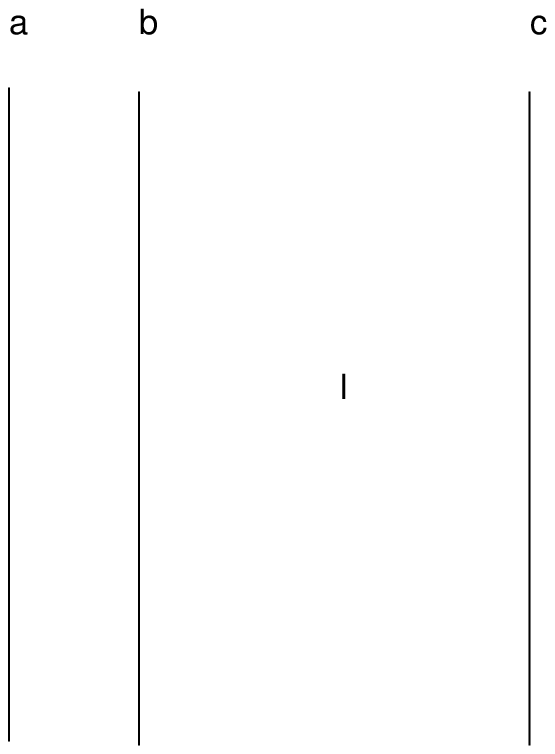}
\end{minipage}\quad\Longleftrightarrow\quad
\begin{minipage}{100pt}
\psfrag{l}[c]{\Huge$\cdots$}\psfrag{r}[c]{\Huge$\cdots$}\psfrag{a}[c]{$g_1$}\psfrag{b}[c]{$g_2$}\psfrag{c}[c]{$g_r$}
\psfrag{p}[c]{$2\pi$ twist}
\includegraphics[height=80pt]{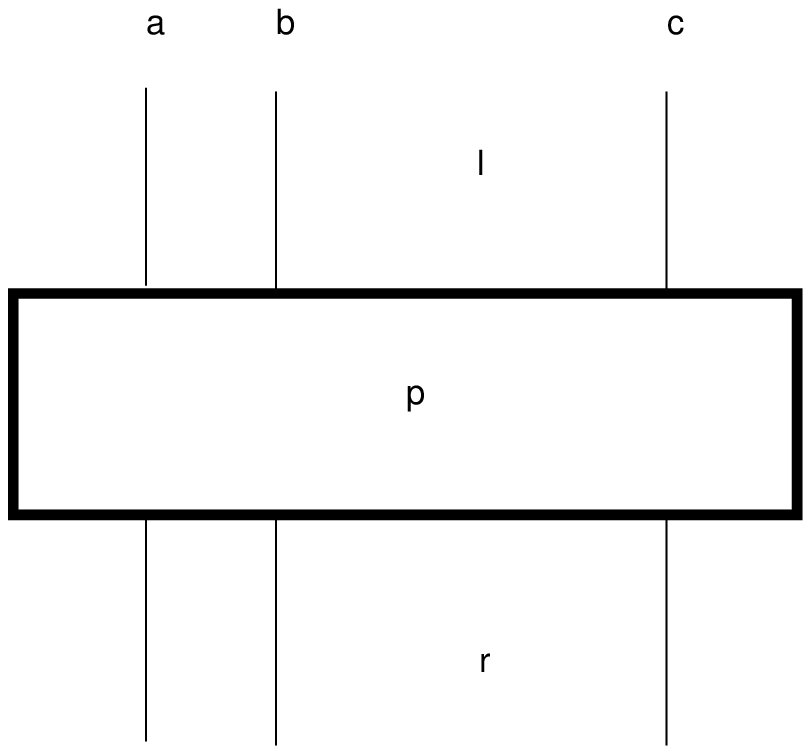}
\end{minipage}
\qquad\ \mbox{with $\prod_{i=1}^r g_i=1\in\Dn$.}
$$

\begin{thm}\label{T:PretzBaseKnot}
Any $\Dn$-coloured knot $(K,\rho)$ is $\rho$--equivalent to one of
the $\Dn$-coloured knots of Figure \ref{F:PretzBaseKnot} for
$k=0,1,\ldots,n-1$. This implies that this set of knots (the pretzel
knots $p\left(\rule{0pt}{8pt}2kn+1,1,-n\right)$ for
$k=0,1,\ldots,n-1$ with the specified colouring) is a complete set
of base-knots for $\Dn$.
\end{thm}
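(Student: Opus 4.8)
The plan is to work through a band projection of a Seifert surface $F$ for $(K,\rho)$ and to translate the $\rho$--equivalence moves into elementary operations on the associated pair $(V,\mathbf{c})$, where $V$ is the Seifert matrix in a disc--with--bands basis of $H_1(F)$ and $\mathbf{c}\in\mathds{Z}_n^{2g}$ is the colouring vector of Section \ref{SSS:ColouringVector}. The colouring condition forces a compatibility relation between $V$ and $\mathbf{c}$ modulo $n$, and the target pretzel knots $p(2kn+1,1,-n)$ of Figure \ref{F:PretzBaseKnot} are exactly the genus--one normal forms of this data, indexed by a residue $k\in\{0,1,\dots,n-1\}$. Since $\rho$--equivalence is symmetric and is a special case of surgery in $\ker\rho$, reducing an arbitrary $(K,\rho)$ to one of these forms immediately yields that they constitute a complete set of base--knots; so the whole content lies in the first sentence of the statement.

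First I would set up the dictionary between geometry and algebra. Three families of moves are available. (i) Band slides, handle slides and basis reorderings realise a congruence $V\mapsto PVP^{T}$ together with the matched change $\mathbf{c}\mapsto P\mathbf{c}$ for $P\in GL(2g,\mathds{Z})$. (ii) A stabilisation adds a cancelling pair of bands, replacing $(V,\mathbf{c})$ by $\left(V\oplus\bigl(\begin{smallmatrix}0&1\\0&0\end{smallmatrix}\bigr),(\mathbf{c},0,0)\right)$; this changes only the chosen surface, not $(K,\rho)$. (iii) The full--twist move of the diagrammatic description inserts a $2\pi$ twist on a collection of bands threading a small disc with multiplicity vector $\mathbf{w}$, changing $V\mapsto V\pm\mathbf{w}\mathbf{w}^{T}$ and leaving $\mathbf{c}$ fixed, subject to the constraint $\prod_{i}g_i=1\in\Dn$, which I would show is exactly the linear condition $\mathbf{w}\cdot\mathbf{c}\equiv0\pmod n$.

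With the dictionary in hand the argument becomes a normal--form computation. Because $\rho$ surjects onto $\Dn$, the rotation part of the colouring generates $\Cn\cong\mathds{Z}_n$, so $\mathbf{c}$ is primitive modulo $n$ and a congruence from (i) brings it to $\mathbf{c}=\mathbf{e}_1$. The twisting constraint then reads $w_1\equiv0\pmod n$: twists are unrestricted on the coordinates $2,\dots,2g$ but permitted on the first coordinate only with multiplicity divisible by $n$. I would use the free twists on the uncoloured coordinates, together with congruences fixing $\mathbf{e}_1$, to clear the off--diagonal interactions and cancel bands in pairs, driving the genus down to one. What then survives is a single residual parameter coupling the coloured band to its dual; the legal twists (taking $\mathbf{w}$ with $w_1=n$ and its remaining entries free) can alter this parameter only in steps of $n$, so its class modulo $n$ is well defined. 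This class is precisely the \emph{coloured untying invariant}, and selecting the representative that yields $2kn+1$ puts $(V,\mathbf{c})$ into the genus--one form of the pretzel knot $p(2kn+1,1,-n)$ with its specified colouring.

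The main obstacle is the genus--reduction step under the constrained move (iii): one may add a rank--one symmetric piece $\mathbf{w}\mathbf{w}^{T}$ only for $\mathbf{w}$ annihilating $\mathbf{c}$ modulo $n$, so the reduction is not the usual unrestricted diagonalisation but one in which the single coloured direction is frozen except for $n$--fold twists. The crux is to verify that, away from that one direction, the available operations still suffice to trivialise the Seifert form and to eliminate handles in cancelling pairs, leaving a $2\times2$ block. I expect this to reduce to a careful bookkeeping argument showing that any interaction with the coloured band can be absorbed into a legal $\mathbf{w}\mathbf{w}^{T}$ correction followed by a basis change, so that the only genuine obstruction to reaching an unknotted model is the $\mathds{Z}_n$--residue $k$.
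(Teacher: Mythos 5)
Your overall strategy coincides with the paper's: band projection, surface data $(V,\mathbf{c})$, the compatibility relation of Lemma \ref{L:VMV}, and a normal-form computation ending in a genus-one block indexed by the coloured untying invariant. But the step you dispose of in one clause --- bringing $\mathbf{c}$ to $\mathbf{e}_1$ ``by a congruence from (i)'' --- is a genuine gap, and it is exactly where the paper has to work hardest. The moves you list do not realise arbitrary $P\in GL(2g,\mathds{Z})$: the band slides of a band projection act by transvections between \emph{twin} bands only, and any realizable $P$ must in any case preserve the symplectic form $V-V^{T}$, so at best one hopes for $Sp(2g,\mathds{Z})$. Surjectivity of $\rho$ gives only that the entries $v_1,\dots,v_{2g}$ \emph{jointly} generate $\Cn$. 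When $n$ is composite this is strictly weaker than primitivity of any usable sub-vector: for $n=15$ and $\mathbf{v}=(s^{3},1,s^{5},1)$ no twin pair contains a generator, and twin-band slides keep each pair inside the proper subgroup it started in, so they can never produce $v_1=s$. You must either prove that enough of the symplectic group is geometrically realizable on band projections (which the paper does not attempt), or do what the paper does in Section \ref{SSS:Step1}: abandon band slides and construct a new cut system on the same Seifert surface outright, via Lemmas \ref{L:AndrewCut1} and \ref{L:AndrewCut2}, whose proof uses surjectivity through an explicit family of cuts rather than through linear algebra on $\mathbf{c}$. Your proposal contains no substitute for this.

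The second gap is the genus reduction, which you correctly identify as the crux and then leave as something you ``expect'' to follow from bookkeeping. The algebra does work out: with $\mathbf{c}=\mathbf{e}_1$ the legal additions $\pm\mathbf{w}\mathbf{w}^{T}$ (those with $w_1\equiv 0\bmod n$) let you alter $V_{ij}$ freely for $i,j\ge 2$, alter $V_{1j}+V_{j1}$ by multiples of $n$, and alter $V_{11}$ by multiples of $n^{2}$, which together with Lemma \ref{L:VMV} isolates $V_{11}/n \bmod n$ as the surviving residue $k$. But two things remain to be proved before this becomes a proof of the theorem: first, that each such algebraic operation is realised by an honest $\pm1$-framed \emph{unknot} in $\ker\rho$ encircling the appropriate strands (not merely by an abstract matrix move); second, that a band whose row and column have been cleared can actually be cancelled geometrically, which requires additionally unlinking, untwisting and unknotting it by further surgeries in $\ker\rho$ --- possible precisely because its colouring-vector entries are trivial, and carried out explicitly in the paper's Step 3. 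Until these are supplied, the proposal is a plan for the paper's argument rather than an argument.
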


\begin{figure}
\begin{minipage}{2in}
\psfrag{l}[c]{\Huge$\vdots$}\psfrag{r}[c]{\Huge$\vdots$}\psfrag{k}[r]{\parbox{0.7in}{$2kn+1$\\[0.1cm]
half--twists}\ \ $\left\{\rule{0pt}{0.6in}\right.$}\psfrag{1}[l]{$\left.\rule{0pt}{0.6in}\right\}$\ \ \parbox{0.7in}{\ \quad$-n$\\[0.1cm]
half--twists}}
\psfrag{a}[c]{$ts$}\psfrag{b}[c]{$t$}\psfrag{c}[c]{$t$}\psfrag{d}[c]{$ts$}
\includegraphics[width=2in]{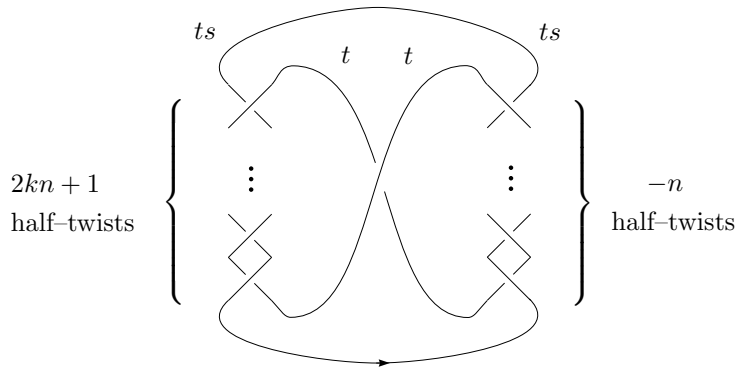}
\end{minipage}\rule{0pt}{1.1in}
\caption{\label{F:PretzBaseKnot}The base-knots in the band
projection approach.}
\end{figure}

Because too much extra notation would need to be introduced at this
point in order to explain it, an explicit construction of the
irregular branched dihedral covering spaces corresponding to the set
of base-knots of Figure \ref{T:PretzBaseKnot} is pushed off to
Section \ref{SS:SeifertLift}.\par

Having just defined a new equivalence relation, several questions
immediately arise. How many equivalence classes are there? Can they
be detected with algebraic information?\par

What we would really like is a theorem characterizing these classes
in terms of a readily computable algebraic invariant. There are many
prototypes for this in the recent literature. One example is the
result of Murakami--Nakanishi \cite{MN89}, which is closely related
to results of Matveev \cite{Mat87}, which characterizes
$\Delta$--equivalence classes of links in terms of their linking
matrices. Another is the result of Habiro \cite{Hab00} classifying
knots, all of whose finite-type invariants up to a certain degree
are equal, via surgery along tree claspers. Yet another is the work
of Naik--Stanford \cite{NS03} which links $S$--equivalence classes
of knots to double-delta moves. The influence of this point of view
on recent research should be clear.\par

In \cite[Section 6]{Mos06b} a non-trivial function from
$\Dn$--coloured knots to $\pZ$ was defined. Its value for a
$\Dn$-coloured knot in $S^3$, in terms of a Seifert matrix $\M$ and
a vector $\vec{w}$ which determines the $\Dn$-colouring $\rho$, is
given by the formula:

$$
\mathrm{cu}(K,\rho)=\frac{2(\vec{w}^{\thinspace T}\cdot \M\cdot
\vec{w})}{n}\bmod n.
$$

The value $\mathrm{cu}(K,\rho)\in\pZ$ was called the \emph{coloured
untying invariant} of $(K,\rho)$. It was proven there that
$\mathrm{cu}$ is invariant under surgery in $\ker\rho$, and so, in
particular, is constant function on $\rho$--equivalence classes (see
also \cite{LiWal08}). It was also shown there that every possible
value is realized by some $\Dn$-coloured knot. These facts imply
that the number of $\rho$--equivalence classes is at least $n$. On
the other hand, because the complete sets of base-knots in Theorem
\ref{T:UnKnotBaseKnot} and in Theorem \ref{T:PretzBaseKnot} both
have cardinality $n$, it follows that $n$ is also an upper bound for
the number of $\rho$--equivalence classes. Thus a by-product of our
constructions is:

\begin{cor}\label{C:RhoCorollary}
Two $\Dn$-coloured knots have the same coloured untying invariant if
and only if they are $\rho$--equivalent. In particular, the number
of $\rho$--equivalence classes of $\Dn$-coloured knots is $n$.
\end{cor}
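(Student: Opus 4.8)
The plan is to deduce both implications of the ``if and only if'', together with the count of equivalence classes, from a single pigeonhole argument that packages three facts already in hand: the invariance of $\mathrm{cu}$ under surgery in $\ker\rho$, the surjectivity of $\mathrm{cu}$ onto $\mathds{Z}_n$, and the existence (Theorem \ref{T:PretzBaseKnot}, or equally Theorem \ref{T:UnKnotBaseKnot}) of a complete set of base-knots of cardinality $n$. None of the topological work is redone here; the corollary is a bookkeeping consequence of those inputs.

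First I would dispose of the easy implication. If $(K_1,\rho_1)$ and $(K_2,\rho_2)$ are $\rho$-equivalent, then one is obtained from the other by a sequence of surgeries on $\pm1$-framed unknots in $\ker\rho$. Such surgeries are a special case of surgery in $\ker\rho$, and $\mathrm{cu}$ was shown in \cite{Mos06b} to be invariant under surgery in $\ker\rho$; hence the two knots share the same coloured untying invariant. This shows $\mathrm{cu}$ is constant on $\rho$-equivalence classes, so it descends to a well-defined map $\overline{\mathrm{cu}}\co\mathcal{E}\to\mathds{Z}_n$, where $\mathcal{E}$ denotes the set of $\rho$-equivalence classes of $\Dn$-coloured knots.

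Next I would bound $\lvert\mathcal{E}\rvert$ from both sides. For the upper bound, Theorem \ref{T:PretzBaseKnot} asserts that every $\Dn$-coloured knot is $\rho$-equivalent to one of the $n$ base-knots indexed by $k=0,1,\ldots,n-1$, so $\lvert\mathcal{E}\rvert\leq n$. For the lower bound, \cite{Mos06b} shows that every value in $\mathds{Z}_n$ is realised by some $\Dn$-coloured knot, so $\overline{\mathrm{cu}}$ is surjective and therefore $\lvert\mathcal{E}\rvert\geq n$. Combining the two gives $\lvert\mathcal{E}\rvert=n$, which is the assertion that there are exactly $n$ equivalence classes.

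The remaining implication then falls out. Since $\overline{\mathrm{cu}}\co\mathcal{E}\to\mathds{Z}_n$ is a surjection between two finite sets of the same cardinality $n$, it is automatically a bijection, hence injective; and injectivity of $\overline{\mathrm{cu}}$ says precisely that two $\Dn$-coloured knots with equal coloured untying invariant lie in the same $\rho$-equivalence class. I do not expect a genuine obstacle in the corollary itself, as all the difficulty is absorbed into the base-knot theorems and the cited properties of $\mathrm{cu}$. The only point deserving care is the matching of inputs: the upper bound requires the \emph{completeness} of the base-knot set, so that nothing escapes the $n$ classes, while the lower bound requires the \emph{surjectivity} of $\mathrm{cu}$, and it is the resulting equality of the two bounds that forces the two a priori distinct notions to coincide.
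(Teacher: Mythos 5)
Your proposal is correct and is essentially the paper's own argument: the lower bound of $n$ on the number of $\rho$--equivalence classes comes from the invariance and surjectivity of $\mathrm{cu}$, the upper bound from a complete set of base-knots of cardinality $n$, and the equivalence follows by pigeonhole. One small caution: the upper bound on \emph{$\rho$--equivalence} classes really needs Theorem \ref{T:PretzBaseKnot} (where the reduction is by $\pm1$--framed unknots in $S^3$), since Theorem \ref{T:UnKnotBaseKnot} only bounds classes under the coarser relation of general surgery in $\ker\rho$ into lens spaces --- but your main line of reasoning already uses the right theorem.
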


For $n$ prime this was \cite[Conjecture 1]{Mos06b}, where it was
proved for $n=3$ and for $n=5$.  This conjecture was also the
subject of \cite{LiWal08}, where bordism theory was used to put an
upper bound of $2n$ on the number of $\rho$--equivalence
classes.\par

\subsection*{The view from here}
As stated at the beginning of the introduction, our motivation is to
develop a theory of quantum topology for dihedral covering spaces
and covering links. How to proceed? Many tantalizing hints can be
found in the literature.

One possible route would be to generalize recent results in the
cyclic case \cite{GK03,GK03b} due to Garoufalidis and Kricker. The
results culminate in a universal formula for the LMO invariant of a
cyclic branched cover in terms of the rational lift of the loop
expansion of the Kontsevich invariant \cite{GK04}. This rational
lift may be viewed as a version of the Kontsevich invariant coloured
by the canonical representation
$\pi_1\left(\overline{S^3-N(K)}\right)\to\mathds{Z}$.

Using the surgery presentations in this paper, one should be able to
obtain a version of these constructions where the colouring group is
$\Dn$ instead of $\mathds{Z}$. Taking the `$1$--loop part' will give
an analogue to the Alexander polynomial. The $2$--loop part should
determine the Casson(--Walker--Lescop) invariant for an irregular
dihedral covering space $M$ (which can be any $3$--manifold).

Another clue for the shape of such a theory is a mysterious formula
for the Rohlin invariant of a dihedral branched covering space that
was discovered in the seventies by Cappell and Shaneson
\cite{CS75,CS84}. Recall that the Rohlin invariant is the mod $2$
reduction of the Casson--Walker invariant, which is the unique
finite type invariant of degree $1$.\par

The theory of knot concordance has long been a blind spot for
`traditional' quantum topology. The classical invariants which
access this type of information are typically constructed from
systems of covering spaces. One of our longer term goals is to
develop sufficient technology to make contact with these
constructions.

\subsection*{Odds and ends}
The paper concludes in Section \ref{S:oddsandends} with a variety of
odds and ends which are immediate corollaries of the constructions
in the previous sections. First, the choice of a complete set of
base-knots in Theorem \ref{T:PretzBaseKnot}, which was made after
trial and error, is of course not the only one possible. Some other
choices are also worth mentioning. By choosing the twist knots in
Figure \ref{F:twistknot} as a complete set of base-knots, we can
prove that the surgery link in Theorem \ref{T:UnKnotBaseKnot} can be
chosen to have linking number zero with the component labeled by
$s$. There $m=1-\frac{(n+1)^2}{2}$ if $\frac{n+1}{2}$ is even, while
if $\frac{n+1}{2}$ is odd then $m=2-\frac{n^2+1}{2}$. Choosing the
torus knots of Figure \ref{F:torusknot} gives a picture that is easy
to lift (see \cite{Mos06b} for the $n=3$ and $n=5$ cases) but is not
a natural end-point for our algorithms. Using it we can prove that a
$3$--manifold with $\Dn$-symmetry has a surgery presentation with
$\Dn$-symmetry, extending a `visualization' result of Przytycki and
Sokolov \cite{PrzS01} and of Sakuma \cite{Sak01}. Finally, we may
choose a complete set of base-knots which differ only by the choice
of their $\Dn$-colouring, as shown in Figure \ref{F:1knot8col}.

\begin{figure}
\psfrag{l}[c]{\Huge$\vdots$}
\begin{minipage}{2in}
\psfrag{t}[c]{$t$}\psfrag{s}[c]{$ts$}\psfrag{k}[r]{\parbox{0.65in}{\tiny$kn+m$\\[0.1cm]
\ \ \
$\frac{1}{2}$--twists}\normalsize$\left\{\rule{0pt}{0.55in}\right.$}
\includegraphics[width=1.1in]{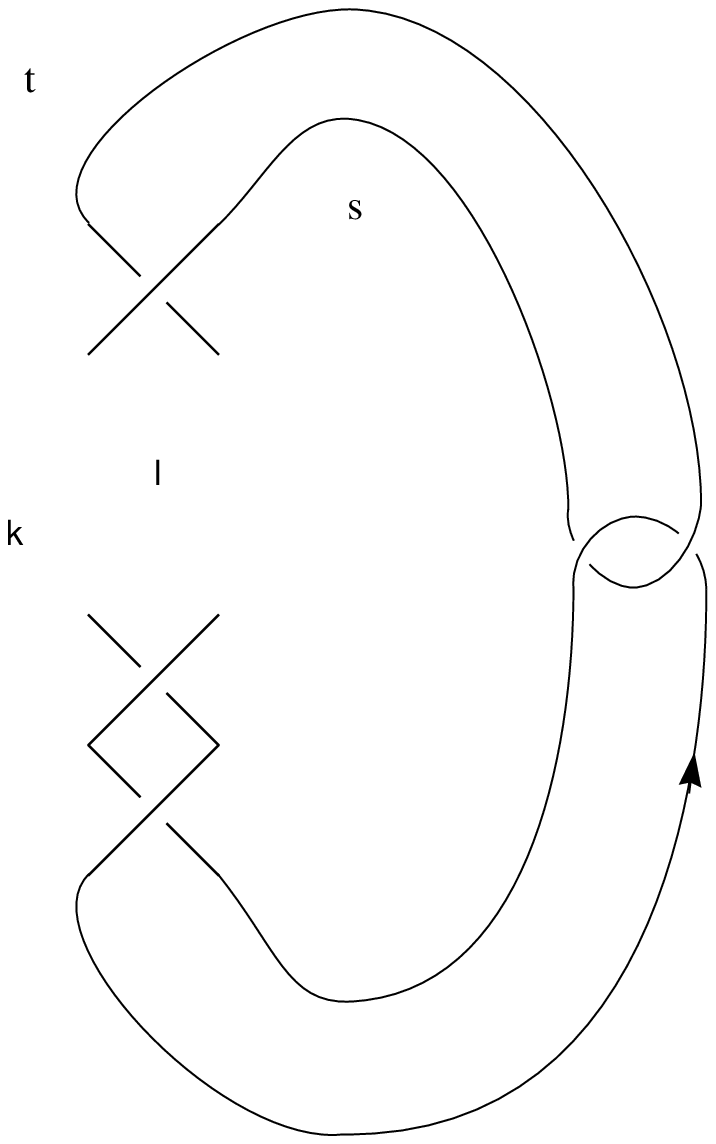}
\caption{\label{F:twistknot}}
\end{minipage}
\qquad\
\begin{minipage}{2in}
\psfrag{t}[c]{$t$}\psfrag{s}[c]{$ts$}
\psfrag{k}[r]{\parbox{0.42in}{\tiny$(2k+1)n$\\[0.1cm]
$\frac{1}{2}$--twists}\normalsize$\left\{\rule{0pt}{0.55in}\right.$}
\includegraphics[width=1.1in]{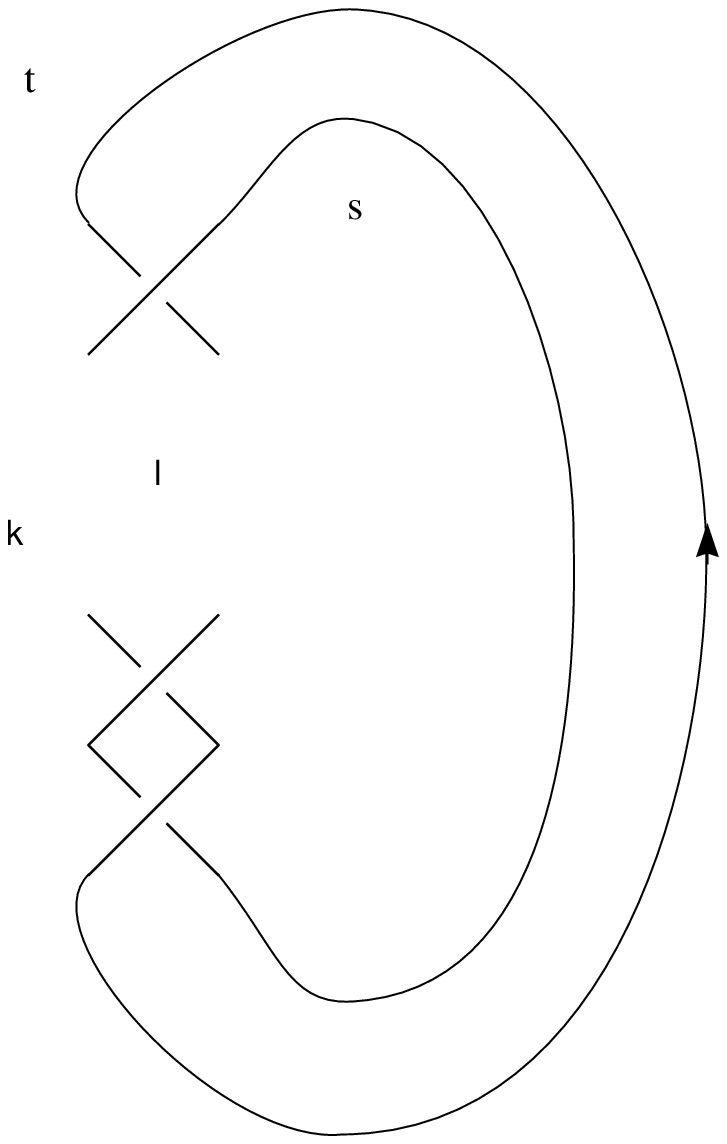}
\caption{\label{F:torusknot}}
\end{minipage}\\[0.3in]
\begin{minipage}{2in}
\psfrag{k}[r]{\parbox{0.42in}{\tiny$\ \ \ \ n$\\[0.1cm]
$\frac{1}{2}$--twists}\normalsize
$\left\{\rule{0pt}{0.55in}\right.$}
\psfrag{t}[c]{$s$}\psfrag{s}[c]{$s^k$}
\includegraphics[width=1.1in]{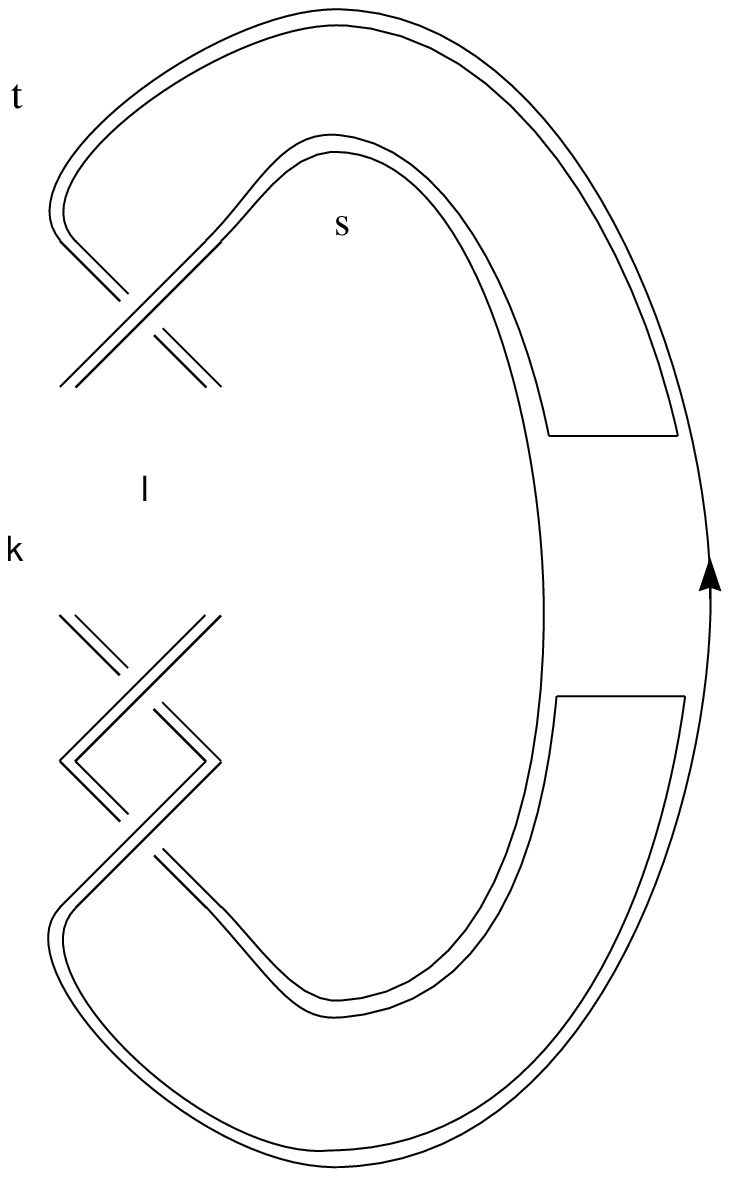}
\caption{\label{F:1knot8col}}
\end{minipage}
\end{figure}

Although the methods in this paper are elementary, the results
appear to be new. Swenton \cite{Swe04}, and independently Yamada
\cite{Yam02} for $n=3$, give quite different algorithms for
translating from dihedral covering presentations to surgery
presentations, `forgetting' the knot.


\subsection*{Some further problems}
\begin{itemize}
\item Explore the relationship between the untying approach and the
band projection approach. In particular, how can one calculate the
coloured untying invariant of a $\Dn$-coloured knot in a
$3$-manifold other than $S^3$?
\item Explore the possibility of using Goeritz surfaces instead of
Seifert surfaces in the band projection approach, giving the torus
knots as a complete set of base knots directly.
\item Find minimal complete sets of base-knots for groups other than the
dihedral group. Use these to find presentations for other classes of
covering spaces.
\item Extend the results of this paper to $\Dn$-coloured algebraically
split links (the extension to boundary links is straightforward).
\end{itemize}

\section{Dihedral branched covering spaces}\label{S:definitions}

In this section we recall the way in which $(K,\rho)$ presents a
closed $3$--manifold $M$.\par

We first recall how a \textit{monodromy representation}
characterizes an (unbranched) covering space. Let $\mathrm{pr}\co
\widetilde{X}\twoheadrightarrow X$ be an $n$--fold (unbranched)
covering space of a closed $3$--manifold $X$ with basepoint $\ast$.
An oriented loop $\ell\subset X$ based at $\ast$ lifts to a
collection of distinct loops $\ell_1,\ldots,\ell_n$ each starting
and ending at one of the $n$ preimages $\ast_1,\ldots,\ast_n$ of
$\ast$ in $\widetilde{X}$. Sending the initial point of each of
these paths to its endpoint gives a permutation of
$\ast_1,\ldots,\ast_n$, inducing a representation

$$\pi_1(X,\ast)\rightarrow \mathrm{Sym}\left(\mathrm{pr}^{-1}(\ast)\right)$$

\noindent which is unique up to relabeling lifts of the basepoint.
Choosing a different basepoint in $X$ modifies the representation
via some bijection $\mathrm{pr}^{-1}(\ast)\simeq
\mathrm{pr}^{-1}(\ast^\prime)$.\par

The theory of covering spaces tells us that two covering spaces are
equivalent (that is, homeomorphic by a homeomorphism respecting the
covering map) if and only if their monodromy representations are the
the same (after some relabeling). Thus we can specify a covering
space by giving a representation $\pi_1(X,\ast)\rightarrow
\mathrm{Sym}\left(\mathrm{pr}^{-1}(\ast)\right)$.\par

From an intuitive cut-and-paste point of view it is natural to
present a covering space by means of its monodromy. This allows one
to construct it by cutting the base space into cells, taking the
appropriate number of copies of each cell, and gluing them together
according to the representation. The following example, which plays
a part in the proof of Theorem \ref{T:untyingmethod}, is a good
illustration of this.\par

\begin{example}\label{surfexamp} Consider a genus two handlebody
equipped with base point and a representation $\rho$ from its
fundamental group onto $D_{14}$. To construct the covering space
whose monodromy group is given by this representation, we begin by
cutting the handlebody into a cell:
$$
\begin{minipage}{150pt}
\includegraphics[width=150pt]{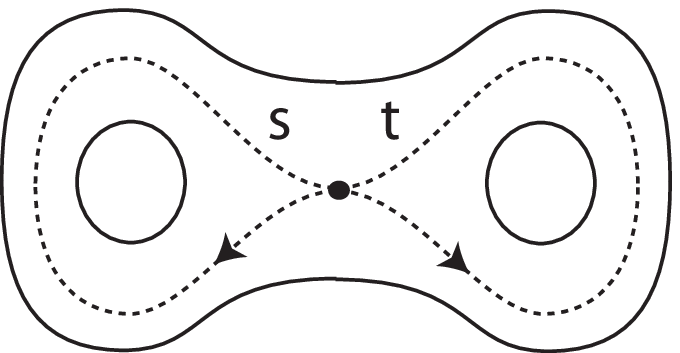}
\end{minipage}
\quad\ \ \ \begin{minipage}{30pt}
\includegraphics[width=30pt]{fluffyarrow}
\end{minipage}\quad
\begin{minipage}{150pt}
\includegraphics[width=150pt]{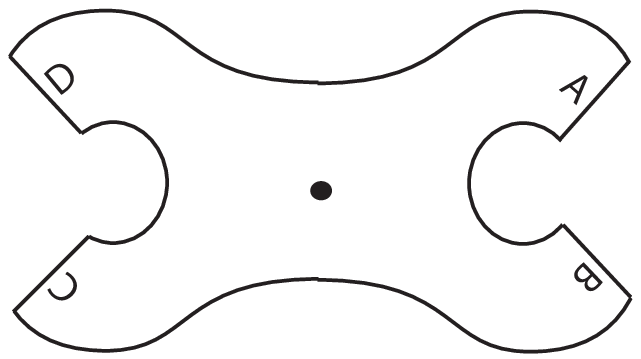}
\end{minipage}
$$
Now we take seven copies of this cell, and glue them together
according to $\rho$.
$$
\begin{minipage}{4in}
\psfrag{s}[c]{\normalsize$1$}\psfrag{w}[c]{\normalsize$2$}\psfrag{x}[c]{\normalsize$3$}\psfrag{y}[c]{\normalsize$4$}\psfrag{v}[c]{\normalsize$5$}\psfrag{u}[c]{\normalsize$6$}\psfrag{t}[c]{\normalsize$7$}%
\fs\psfrag{1}[c]{$D$}\psfrag{3}[c]{$D$}\psfrag{5}[c]{$D$}\psfrag{b}[c]{$D$}\psfrag{0}[c]{$D$}\psfrag{8}[c]{$D$}\psfrag{c}[c]{$D$}%
\psfrag{2}[c]{$C$}\psfrag{4}[c]{$C$}\psfrag{6}[c]{$C$}\psfrag{a}[c]{$C$}\psfrag{7}[c]{$C$}\psfrag{9}[c]{$C$}\psfrag{d}[c]{$C$}%
\psfrag{q}[c]{$A$}\psfrag{p}[c]{$A$}\psfrag{m}[c]{$A$}\psfrag{l}[c]{$A$}\psfrag{i}[c]{$A$}\psfrag{h}[c]{$A$}\psfrag{e}[c]{$A$}%
\psfrag{r}[c]{$B$}\psfrag{o}[c]{$B$}\psfrag{n}[c]{$B$}\psfrag{k}[c]{$B$}\psfrag{g}[c]{$B$}\psfrag{f}[c]{$B$}\psfrag{j}[c]{$B$}
\includegraphics[width=4in]{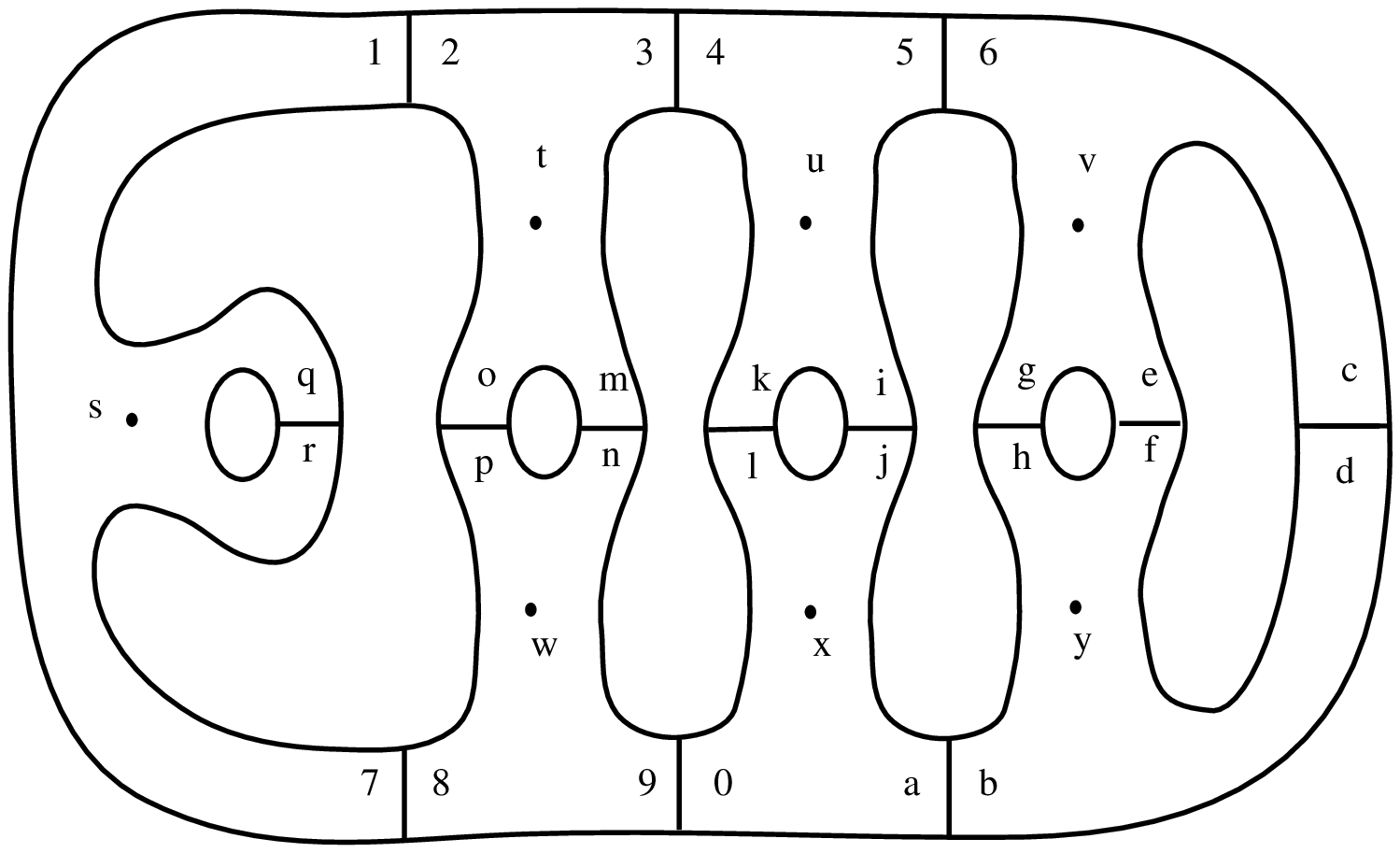}
\end{minipage}
$$
\end{example}

We now construct $M$. Begin from a $\Dn$-coloured knot $(K,\rho)$,
and consider the $n$--sheeted (unbranched) covering space
$\tilde{X}$ of the knot complement $X$ of $K$ (the closure in $S^3$
of the complement of a tubular neighbourhood $N(K)$ of $K$) with
monodromy given by $\rho$, where $\Dn$ is thought of as a subgroup
of $\mathrm{Sym}\left(\ast_1,\ldots,\ast_n\right)$.\par

Consider the boundary of this covering space. What is it? Well, the
$\Dn$-colouring $\rho$ sends a meridian to a reflection, and the
longitude may be chosen so that it is sent to $1$. It follows that
the boundary of $\widetilde{X}$ is a collection of $\frac{n+1}{2}$
tori--- $\frac{n-1}{2}$ two--sheeted coverings and $1$ one--sheeted
covering of the boundary torus $\partial N(K)$ of $X$. Glue
$\frac{n+1}{2}$ solid tori into these boundary components, longitude
to longitude, such that a meridional disc is glued into some lift of
a power of the meridian downstairs.\par

This is the desired space $M$: the branched dihedral covering space
of $S^3$ associated to the $\Dn$-coloured knot $(K,\rho)$. The cores
of the glued-in tori, with orientations induced by the orientation
of $K$, form the covering link $\tilde{K}$.

\begin{rem}
It is more usual to specify a covering space by a conjugacy class of
subgroups of $\pi_1(X)$ (corresponding to the image of
$\pi_1(\tilde{X})$ under the projection). If a covering space is
determined by a monodromy representation then the corresponding
class of subgroups is given by taking the stabilizer of a chosen
element in $\mathrm{Sym}\left(\ast_1,\ldots,\ast_n\right)$.
\end{rem}

\begin{rem}
The $3$--manifold $M$ is usually referred to as the \emph{irregular}
branched dihedral covering space associated to $(K,\rho)$ (as we
referred to it in the introduction), because it corresponds to the
preimage under $\rho$ of $\langle t \rangle$ which is not a normal
subgroup of $\Dn$. 
\end{rem}

\section{Untying approach}\label{S:untyingapproach}
This approach consists of two steps. The first is to obtain a
surgery presentation of a $\Dn$-coloured knot $(K,\rho)$ in the
complement of an unknot in a lens--space (one of the base-knots of
Theorem \ref{T:UnKnotBaseKnot}). Such a presentation is called a
\emph{\sds presentation} of $(K,\rho)$. The second step is to lift
the \sds presentation to a surgery presentation of the dihedral
branched covering space and of the covering link.

\subsection{Obtaining a \sds presentation}\label{SS:sdsPresentation}
The construction consists of three steps: use surgery to untie the
knot, perform handleslides to concentrate the non-trivial labels
onto a single surgery component, and finish with another round of
surgery to untie that surgery component. We'll also describe some
moves which put the labels and surgery curves in the resulting
diagram in a standard form.

\subsubsection{Untying the knot}\label{SSS:untie}
We can untie any knot $K$ by crossing changes, realized by surgery
on $\pm1$--framed unknots which have linking number zero with $K$.
This allows us to present $K$ as a $\pm1$--framed link $L$ in the
complement of a standard unknot $U\subset S^3$, such that surgery on
$L$ recovers $K\subset S^3$. In the following section we generalize
this procedure to $\Dn$-coloured knots.\par


Let us begin by reminding ourselves that the arcs of a knot in $S^3$
are all coloured by reflections (elements of the form $ts^a\in\Dn$).
This follows from the Wirtinger relations. Near a crossing where the
over-crossing arc is labeled $g_1$, the under-crossing arcs must be
labeled $g_2$ and either $g_1^{-1}g_2g_1$ or $g_1g_2g_1^{-1}$ for
some $g_2\in \Dn$. If any arc is labeled by a rotation then all arcs
in the knot diagram would be labeled by rotations (because
$\Cn\triangleleft \Dn$) which would contradict surjectivity of the
$\Dn$-colouring $\rho$.\par

When performing surgery, the colours of the arcs of the introduced
surgery component are induced as follows:

\begin{lem}\label{L:untielemone}
Let $g_1$ and $g_2$ be elements in $\Dn$. The local moves depicted
below induce colours on the added surgery components as shown. (The
two strands ``being twisted'' can be from the knot or from surgery
components.)

\begin{equation}
\begin{minipage}{95pt}
\psfrag{1}[c]{$g_1$}\psfrag{2}[c]{$g_2$}\psfrag{3}[c]{$g_2^{-1}g_1g_{2}$}\psfrag{4}[c]{$g_2^{-1}g_1g_2g_1^{-1}g_2$}
\includegraphics[width=95pt]{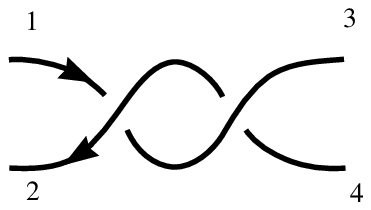}
\end{minipage} \qquad\Longleftrightarrow\qquad
\raisebox{8pt}{\begin{minipage}{140pt}
\psfrag{1}[c]{$g_1$}\psfrag{2}[c]{$g_2$}\psfrag{3}[c]{$g_2^{-1}g_1g_{2}$}\psfrag{4}[c]{$g_2^{-1}g_1g_2g_1^{-1}g_2$}\psfrag{5}[c]{$g_2g_1^{-1}$}\psfrag{6}[c]{$g_1^{-1}g_2$}
\includegraphics[width=140pt]{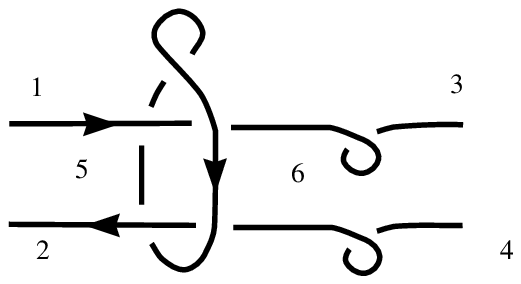}
\end{minipage}}
\end{equation}

\begin{equation}\begin{minipage}{95pt}
\psfrag{1}[c]{$g_1$}\psfrag{2}[c]{$g_1g_2g_1^{-1}$}\psfrag{3}[c]{$g_2g_1g_2^{-1}$}\psfrag{4}[c]{$g_2$}
\includegraphics[width=95pt]{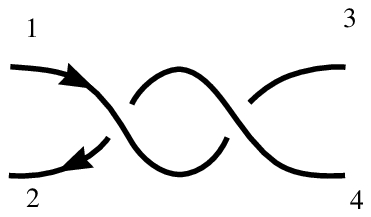}
\end{minipage}\qquad \Longleftrightarrow\qquad
\raisebox{8pt}{\begin{minipage}{140pt}
\psfrag{1}[c]{$g_1$}\psfrag{2}[c]{$g_1g_2g_1^{-1}$}\psfrag{3}[c]{$g_2g_1g_2^{-1}$}\psfrag{4}[c]{$g_2$}\psfrag{5}[c]{$g_1^{2}g_2^{-1}g_1^{-1}$}\psfrag{6}[c]{$g_1g_2^{-1}$}
\includegraphics[width=140pt]{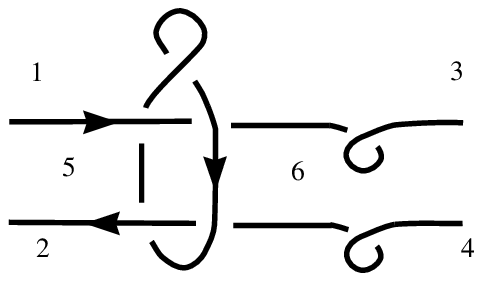}
\end{minipage}}
\end{equation}
\end{lem}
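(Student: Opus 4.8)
The plan is to verify Lemma~\ref{L:untielemone} by a direct Wirtinger-relation computation. The statement is local: it asserts that when we introduce a $\pm1$-framed surgery curve encircling two strands (realizing a full twist, i.e.\ a crossing change on the pair of strands), the colours on the newly created arcs are forced by the Wirtinger relations from the colours $g_1,g_2$ on the incoming strands. So the entire proof reduces to writing down the Wirtinger relations at each crossing of the local tangle picture and checking that they are consistent with — and uniquely determine — the labels shown in the figures \texttt{moveA2} and \texttt{moveB2}.

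First I would fix the conventions. A full twist on two parallel strands is built from two elementary crossings (a clasp), so I would label the strands just before the twist by $g_1$ (over) and $g_2$ (under) in the first case, and by $g_1$ and $g_1 g_2 g_1^{-1}$ in the second, matching the left-hand sides of equations~(1) and~(2). At each of the two crossings I apply the Wirtinger rule already recalled in the text: the under-crossing arc labelled $h$ passing under an arc labelled $k$ emerges labelled $k^{-1}hk$ or $khk^{-1}$ depending on the sign of the crossing. Propagating $g_1,g_2$ through the two crossings of the full twist produces the output labels $g_2^{-1}g_1g_2$ and $g_2^{-1}g_1g_2g_1^{-1}g_2$ in the first move, and $g_2 g_1 g_2^{-1}$ and $g_2$ in the second. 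The intermediate arcs — the short segments of the surgery curve itself — pick up the labels $g_2 g_1^{-1}$, $g_1^{-1}g_2$ (respectively $g_1^2 g_2^{-1} g_1^{-1}$, $g_1 g_2^{-1}$) shown in the figures; each of these is read off as a product of two adjacent Wirtinger generators where the surgery curve crosses a strand.

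Second, I would verify the \emph{closure condition} on the surgery component. For the picture to represent a legitimate surgery in $\ker\rho$ (as required by the framing/colouring condition stated in the introduction), the framing curve of the introduced $\pm1$-framed unknot, written as a product of the Wirtinger generators it meets, must map to $1\in\Dn$. Concretely I would multiply together, in the correct cyclic order and with correct exponents, the generators encountered as one traverses the small surgery unknot once, and check the product telescopes to the identity. This is the one place where a genuine identity in $\Dn$ (rather than mere bookkeeping) must be confirmed, and it is the natural correctness check that the labels in the figure are internally consistent.

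\textbf{The main obstacle} is not mathematical depth but sign- and orientation-bookkeeping: getting the handedness of each crossing, the orientations of both strands and of the surgery curve, and the left-versus-right conjugation convention ($g^{-1}hg$ versus $ghg^{-1}$) all mutually consistent, so that the computed labels agree \emph{exactly} with those drawn in \texttt{moveA2} and \texttt{moveB2}. A single flipped convention turns a conjugation into its inverse and the output no longer matches the figure. I would therefore pin down one reference crossing against the Wirtinger convention used elsewhere in the paper and derive everything else relative to it, rather than trusting the picture twice. Once the conventions are fixed the computation is a short finite check, and the equivalence ``$\Longleftrightarrow$'' in each equation follows because the twist is invertible: surgery on the $\mp1$-framed curve undoes it, so the labels can be read in either direction.
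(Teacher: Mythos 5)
Your proposal is correct and ends up doing the same arithmetic as the paper, but it is organized differently, and the difference is worth flagging. The paper's proof first makes the statement precise by exhibiting an explicit PL--homeomorphism $h$ between the two spaces (cut $\overline{S^3-T}$ along a disc spanning the new surgery solid torus, apply a $2\pi$ twist, reglue); the labels on the right-hand diagram are then \emph{defined} as the pullback of the original representation under $h$, and \emph{computed} by pushing each Wirtinger generator through $h$ and evaluating it in the old picture --- for instance the label $g_1^{-1}g_2$ on the new component is the image under $h$ of its meridian, which in the untwisted picture is a loop encircling the two strands and hence a product of their meridians. Your version instead verifies Wirtinger consistency of the displayed labels on the twisted diagram and checks the framing-curve condition. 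That check is the same finite computation, but note one logical subtlety: the Wirtinger relations of the new diagram do \emph{not} force the label on the introduced surgery component from $g_1,g_2$ alone --- there is an a priori free choice of meridian label for the new closed component, and different choices would propagate to different outgoing strand labels. What pins it down is exactly the geometric fact you mention in passing (the meridian of the surgery curve is homotopic, in the complement of the strands, to a product of the two strand meridians), which is the content of the paper's pullback-under-$h$ step. So your argument is sound provided you promote that observation from a bookkeeping remark to the step that determines the surgery-component label; with that in place, your closing point that the $\mp1$ surgery inverts the twist gives the two-way equivalence, matching the paper.
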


\begin{proof}
The precise claim is that there is a PL--homeomorphism $h$ between
the two spaces, taking the knot in one space onto the knot in the
other space, such that the pulled-back representation of the knot
group is as shown. The homeomorphism $h$ is to cut
$\overline{S^3-T}$ along a disc spanning $T$ a tubular neighbourhood
of the introduced surgery component, do a $2\pi$ twist in the
appropriate direction, then reglue the disc and the solid torus.

The label on an arc of the right-hand diagram is determined by the
image under $h$ of a path representing the appropriate element of
the fundamental group. For example, we obtained the label
$g_1^{-1}g_2$ in the first local move by finding the image under $h$
of the Wirtinger generator corresponding to the appropriate meridian
of the introduced surgery curve:

$$
\begin{minipage}{120pt}
\includegraphics[width=120pt]{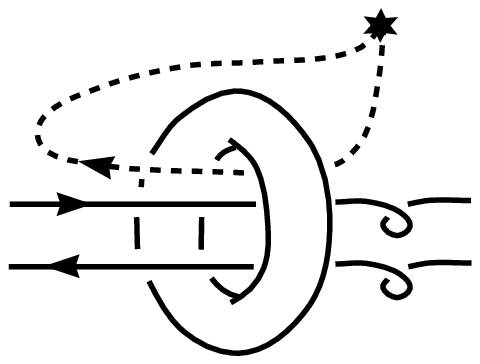}
\end{minipage}\qquad\raisebox{-12pt}{$\Longleftrightarrow$}\qquad
\begin{minipage}{150pt}
\includegraphics[width=150pt]{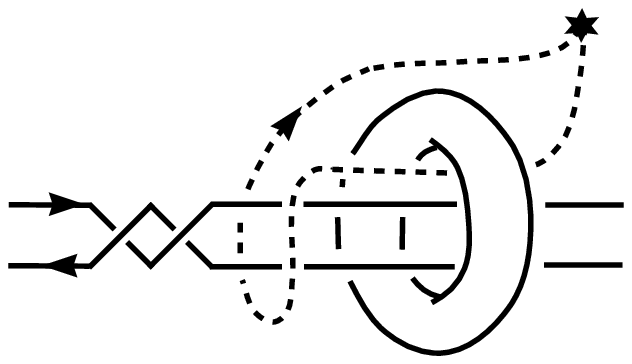}
\end{minipage}$$
\end{proof}

We'll use this lemma to untie $\Dn$-coloured knots. In that case
we'll have $g_1=ts^a$ and $g_2=ts^b$ for some $a,b\in \pZ$, so that
$g_1g_2^{-1}=s^{a+b}$, and the typical move will look like:

\begin{equation*}
\begin{minipage}{95pt}
\psfrag{1}[c]{$ts^a$}\psfrag{2}[c]{$ts^b$}\psfrag{3}[c]{$ts^{2a-b}$}\psfrag{4}[c]{$ts^{3b-2a}$}
\includegraphics[width=95pt]{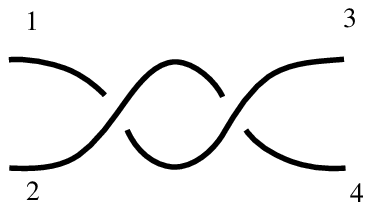}
\end{minipage} \qquad\Longleftrightarrow\qquad
\raisebox{8pt}{\begin{minipage}{120pt}
\psfrag{1}[c]{$ts^a$}\psfrag{2}[c]{$ts^b$}\psfrag{3}[c]{$ts^{2a-b}$}\psfrag{4}[c]{$ts^{3b-2a}$}\psfrag{5}[c]{$s^{b-a}$}\psfrag{6}[c]{$s^{a-b}$}
\includegraphics[width=120pt]{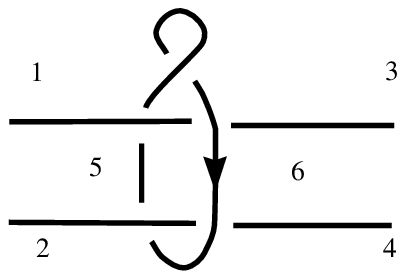}
\end{minipage}}
\end{equation*}

If some meridian of $K$ maps to a reflection $ts^a$, then because
$\Cn$ is a normal subgroup of $\Dn$ all meridians of $K$ map to that
same reflection $ts^a$. Arcs in $L$ are labeled by rotations, and
conjugation by a reflection $ts^a$ of a rotation $s^b$ maps it to
$s^{-b}$. Therefore for any component $C_i$ of $L$ there exists
$j\in\pZ$ such that all arcs of $C_i$ are labeled either $s^j$ or
$s^{-j}$.\par

Because $\rho$ is surjective, $s$ is generated by labels of the arcs
of the knot $K$. Thus there exists an element in
$\pi_1\left(\overline{S^3-N(K)}\right)$ which is represented by a
curve $C$ which has a meridian which maps to $s$. Because $s$ is a
rotation, this curve passes under an even number of arcs of $K$, and
we may choose such a curve to have linking number zero with $K$
because the labels on $K$'s arcs all have order $2$. Perform surgery
on $C$, and set $C_1\ass C$. Now untie the resulting knot by
crossing changes, realized by $\pm1$--framed surgeries along unknots
which have linking number zero with the knot. We obtain surgery
presentation $L$ for $K$ in the complement of the unknot $U$, for
which an arc of $C_1$ is labeled $s$, and so all arcs of $C_1$ are
labeled either $s$ or $s^{-1}$. We have shown the following lemma:

\begin{lem}\label{L:distinguishedcomponent}
The surgery presentation $L$ may be chosen such that the arcs of
$C_1$ are labeled $s$ and $s^{-1}$.
\end{lem}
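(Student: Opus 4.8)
The plan is to produce a curve $C$ whose meridian maps to the rotation $s$, perform surgery along it to create the distinguished component $C_1$, and then untie everything else while preserving the label on $C_1$. I would proceed in the following steps.

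\textbf{Step 1: Produce a curve realizing $s$ via a meridian.} Since $\rho$ is surjective and the arcs of $K$ all carry reflection labels $ts^{a_i}$, any product of an even number of these Wirtinger generators lands in $\Cn=\langle s\rangle$. The products $(ts^{a_i})(ts^{a_j})=s^{a_i-a_j}$ range over a subset of $\Cn$ that must generate $\Cn$ (otherwise $\mathrm{im}(\rho)$ would be a proper subgroup, contradicting surjectivity). Hence some element $x\in\pi_1(\overline{S^3-N(K)})$ maps to $s$. Represent $x$ by an embedded curve $C$ in the knot complement; a meridian of $C$ then maps to $s$ under $\rho$.

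\textbf{Step 2: Arrange $\mathrm{lk}(C,K)=0$.} The curve $C$ passes under the arcs of $K$ an even number of times, since $s$ is a rotation (a product of an even number of reflections). Because every arc of $K$ is labeled by an order-two reflection, I can choose the passes so that the signed intersections cancel, giving $\mathrm{lk}(C,K)=0$. This is exactly the freedom one has in the cyclic untying trick, and I would invoke the same geometric adjustment: reroute $C$ through a small finger following an arc of $K$ to flip a crossing sign without changing $\rho(\text{meridian})$.

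\textbf{Step 3: Surger along $C$ and untie the rest.} Set $C_1\ass C$ and perform surgery along it; by the construction of Section~\ref{SSS:untie} the induced coloured knot is well-defined and an arc of $C_1$ carries the label $s$. Since all arcs of a fixed rotation-labeled component are conjugate by reflections in $K$, and such conjugation sends $s^j\mapsto s^{-j}$, every arc of $C_1$ is labeled either $s$ or $s^{-1}$. Finally, untie the remaining knot by crossing changes realized as $\pm1$--framed surgeries along unknots of linking number zero with $K$, using Lemma~\ref{L:untielemone}; these introduce only rotation-labeled components and do not disturb the labels on $C_1$. The result is the desired presentation $L$ in the complement of the standard unknot $U$, with $C_1$ labeled by $s$ and $s^{-1}$.

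\textbf{Main obstacle.} The delicate point is Step~2: ensuring that the curve representing $s$ can simultaneously be made embedded \emph{and} have linking number zero with $K$ while keeping its meridian's image equal to $s$. The parity argument guarantees an even number of passes, but one must verify that the sign-cancellation modification (adding fingers along $K$) genuinely preserves the homotopy class only up to the conjugation ambiguity that keeps $\rho(\text{meridian})$ a generator of $\Cn$. I expect this to be the step requiring the most care, though it closely parallels the standard cyclic construction and should go through by the same reasoning.
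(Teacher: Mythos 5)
Your proposal follows essentially the same route as the paper: use surjectivity of $\rho$ to find a curve $C$ in the knot complement mapping to $s$, use the evenness of its passes under $K$ and the order-two property of the reflection labels to adjust it to have linking number zero with $K$, surger along it to create the distinguished component, untie the rest with $\pm1$--framed unknots, and conclude from conjugation by reflections that the arcs of $C_1$ carry only $s$ and $s^{-1}$. The step you flag as delicate is exactly the one the paper also treats briefly (adjusting the linking number by pairs of same-signed passes under an arc, which multiply the image by $(ts^a)^{\pm2}=1$), so the argument is correct and matches the paper's.
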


We call $C_1$ the \emph{distinguished surgery component}.

\subsubsection{Handleslides}
A surgery component whose arcs are all labeled $1$ (the identity in
$\Dn$) is said to be \emph{in $\ker\rho$}. The second step of the
construction is to perform handleslides so as to arrange that every
surgery component except for one distinguished component is in
$\ker\rho$. The following lemma tells us how labels transform under
handleslides.

\begin{lem}\label{L:handleslidelemma}
Two diagrams that differ by one of the moves shown below present
equivalent $\Dn$-coloured knots. (The displayed components are
surgery components.)

\begin{align*}
\raisebox{-0.85cm}{\scalebox{0.85}{\includegraphics{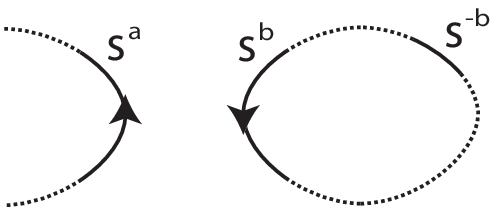}}}
\qquad &\Longleftrightarrow \qquad
\raisebox{-0.85cm}{\scalebox{0.85}{\includegraphics{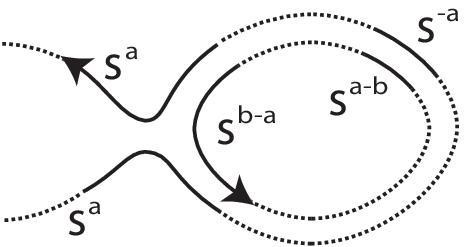}}}\\[0.1cm]
\raisebox{-0.85cm}{\scalebox{0.85}{\includegraphics{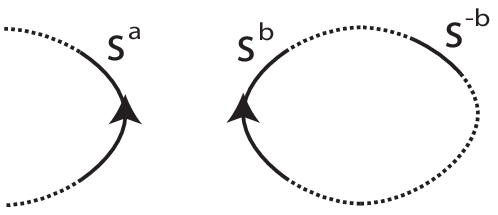}}}
\qquad &\Longleftrightarrow \qquad
\raisebox{-0.85cm}{\scalebox{0.85}{\includegraphics{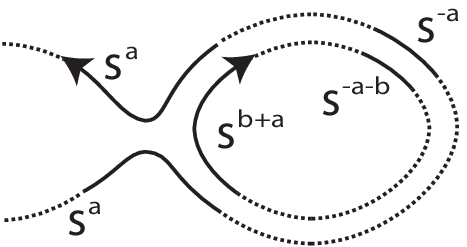}}}
\end{align*}
\end{lem}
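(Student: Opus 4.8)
The plan is to prove both handleslide moves by the same device used in the proof of Lemma~\ref{L:untielemone}: produce an explicit PL--homeomorphism $h$ between the two surgered spaces which carries the knot of one onto the knot of the other, and then compute the induced labels by pulling meridians back through $h$. At the level of uncoloured framed links the two displayed moves are nothing but ordinary Kirby handleslides, in which a surgery component $C$ is replaced by its band--sum $C\#_B C'_\lambda$ with a framing--parallel pushoff $C'_\lambda$ of a second surgery component $C'$. It is classical that such a move produces a homeomorphic surgered $3$--manifold, and since (by hypothesis) the displayed components are all surgery components, the band $B$ may be isotoped off the knot $K_1$, so $h$ fixes $K_1$ and the move therefore preserves the presented $\Dn$--coloured knot. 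The genuine content of the lemma is consequently only the transformation law for the $\Dn$--labels, which I would verify by a finite Wirtinger computation.

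First I would fix orientations of $C$, $C'$ and the band $B$, and take $h$ to be the identity outside a neighbourhood of $B$. Every arc of the diagram lying outside that neighbourhood has its meridian fixed by $h$, and so keeps its old label. For the arcs created by the slide there are two kinds. The arcs of the detour that run parallel to $C'$ have meridians isotopic to meridians of the corresponding arcs of $C'$, so they inherit the labels of $C'$ (up to inversion according to the relative orientation of the detour). The remaining modified arcs are meridians of $C$ transported along $B$; as such a meridian is carried once around $C'$ it returns conjugated by the word $w_\lambda$ represented by the framing curve of $C'$. The decisive point is the framing condition in the definition of a $\Dn$--coloured presentation: the framing word of a surgery component satisfies $\rho(w_\lambda)=1\in\Dn$. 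Hence this conjugation is trivial after applying $\rho$, and the labels propagate across the band exactly as drawn, with only the local crossing factors at the two feet of $B$ surviving; matching those factors against the two figures is the routine bookkeeping step.

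It then remains to check that the output is again a legal $\Dn$--coloured surgery diagram, i.e.\ that the framing curve of the new component $C\#_B C'_\lambda$ still evaluates to $1$ under $\rho$. Its Wirtinger word is, up to conjugation, the product of the framing words of $C$ and of $C'_\lambda$ together with the crossing terms arising from the new intersections introduced by the slide; the two framing words already map to $1$ by hypothesis and the crossing terms occur in mutually inverse pairs, so the whole product maps to $1$ as required. For the second move the component that is slid over lies in $\ker\rho$, so $w_\lambda$ maps to $1$ automatically and every conjugation above is already trivial in $\Dn$ before $\rho$ is applied; this is precisely why that variant of the move leaves all labels unchanged.

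The main obstacle I anticipate is combinatorial rather than conceptual: arranging the orientations of $C$, $C'$ and $B$ so that the conjugating words, and the exponents $\pm j$ on the rotation labels $s^{\pm j}$ of the surgery components, come out with exactly the signs shown in the figures. Because every surgery--component label is a rotation and every framing word lies in $\ker\rho$, the only thing that can go wrong is a sign or a direction of conjugation, so the verification is a careful elementary sign--chase, entirely parallel to the one carried out for Lemma~\ref{L:untielemone}, rather than a substantive difficulty.
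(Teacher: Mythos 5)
Your overall strategy --- realise the handleslide as a PL--homeomorphism of the surgered spaces and compute the new labels by tracking meridians through it --- is the same as the paper's. But the execution contains an error that destroys the content of the lemma. You take $h$ to be the identity outside a neighbourhood of the band $B$, and conclude that ``every arc of the diagram lying outside that neighbourhood \ldots keeps its old label.'' This cannot be right: the handleslide is an isotopy of $C$ across a meridian disc of the solid torus glued in along $C'$, and the boundary of that disc is the framing curve of $C'$, which runs all the way around $C'$. So the support of $h$ is a genus--two handlebody containing \emph{all} of $C'$ (as the paper's proof states), not just a neighbourhood of $B$, and the meridians of $C'$ are genuinely moved. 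Indeed the whole point of the lemma --- the reason it is usable in the very next paragraph of the paper to kill the labels on $C_2,\dots,C_\mu$ --- is that the label of the \emph{slid-over} component changes: a small meridian of $C'$ in the new diagram, isotoped out of the handlebody, becomes a curve in the old diagram encircling both $C'$ and the parallel strand of $C\#_B C'_\lambda$, i.e.\ the class $\mu_{C'}\mu_{C}^{\mp1}$, so its label goes from $s^b$ to $s^{b\mp a}$. On your account that label would be unchanged, and repeated handleslides could never put the other components into $\ker\rho$.

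A second, related slip: the arcs of the detour are arcs of the \emph{new} component $C\#_B C'_\lambda$, so their meridians are meridians of that component, not of $C'$; they carry (conjugates of) the label $s^{\pm a}$ of $C$, not the label of $C'$. Your observation that transporting a meridian of $C$ once around the detour conjugates it by a word mapping to $1$ under $\rho$ (because framing curves of surgery components lie in $\ker\rho$) is correct and is the right consistency check for the labels along the new component, but it addresses only the easy half of the bookkeeping. The paper's proof isolates exactly the computation you are missing: pick a meridian of an arc of the slid-over component, isotope it out of the genus--two handlebody, and read off its image in the old diagram; that is how the label $s^{b-a}$ is obtained.
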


\begin{proof}
When two diagrams are related by a handleslide, the corresponding
spaces are related by a PL--homeomorphism which is the identity
outside a genus two handlebody containing the two involved
components and the `path' of the slide.\par

To observe how labels transform: pick a curve representing some arc
in the right-hand diagram, isotope the curve so that it lies outside
the genus two handlebody corresponding to a handle slide which will
take us to the left-hand diagram, then read off what that curve maps
to in the left-hand diagram.\par

For example, the label $s^{b-a}$, above can be obtained as shown
below:

\begin{align*}
\raisebox{-0.85cm}{\scalebox{0.95}{\includegraphics{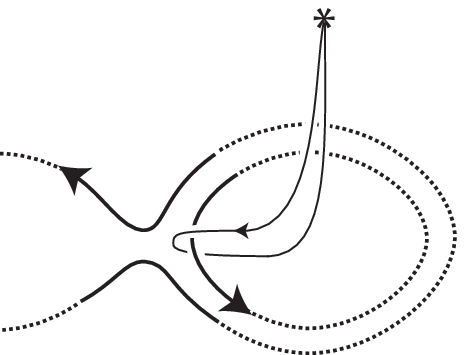}}} \ \ &
\approx & \ \
\raisebox{-0.85cm}{\scalebox{0.95}{\includegraphics{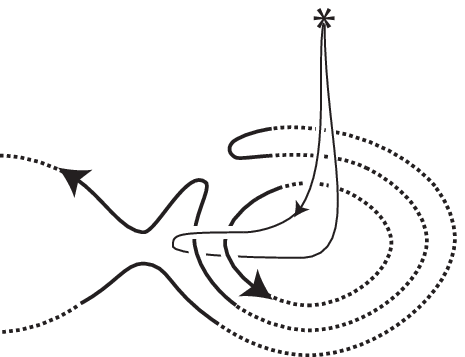}}}
\\[0.1cm]
\ \ & \approx & \ \
\raisebox{-0.85cm}{\scalebox{0.95}{\includegraphics{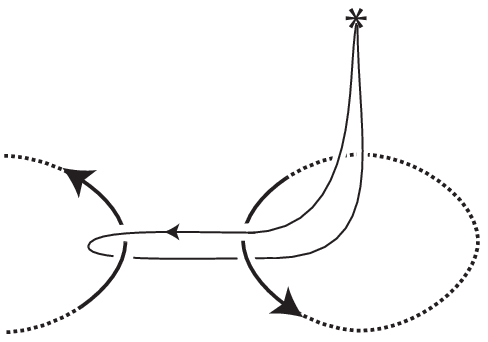}}}
\end{align*}

Note further that once we know what the label on one of the arcs of
a component is, the labels on all of its arcs are determined by the
fact that they must induce a well-defined representation onto $\Dn$.
\end{proof}

By Lemma \ref{L:handleslidelemma} we may repeatedly perform
handleslides until all of the surgery components are in $\ker\rho$
except $C_1$ (the distinguished surgery component). By Lemma
\ref{L:distinguishedcomponent} we may assume $C_1$ has an arc
labeled $s$. For each $1<i\leq \mu$ let $a_i\in\pZ$ be an element
such that some arc of $C_i$ is labeled $s^{a_i}$. The effect of
sliding $C_1$ over $C_i$ is to replace $a_i$ by $a_i-1$ (or to
$a_i+1$ depending on which version of the handleslide is used). Thus
sliding $C_1$ over $C_i$ repeatedly $a_i$ times (or $-a_i$ times)
kills the labels of the arcs of $C_i$. Repeat for all
$i=2,\ldots,\mu$.

\begin{rem}
Readers who try some examples will find that this second step can
add significant complexity to the construction. However things are
not so bad when $n=3$. The reason is that there will only ever be a
single handleslide required to kill the label on a surgery
component, because $1+2=0 \bmod 3$ or $1-1=0 \bmod 3$. Note further
that in this situation the surgery components will remain framed
unknots after the handleslides.
\end{rem}

\subsubsection{Putting the presentation into a standard form}
After the first two steps we have a diagram where:
\begin{itemize}
\item{The knot $K$ has been untied and is in its standard
position $U$.}
\item{There are a number of surgery components, each of
which has linking number zero with the knot.}
\item{Every
surgery component, except one, is in $\ker\rho$, \textit{i.e.} has
all of its arcs labeled $1\in \Dn$.}
\item{The remaining component $C_1$ has each of its
arcs labeled either $s$ or $s^{-1}$.}
\end{itemize}

The final step is to add extra surgery components so that the two
component sublink $U\cup C_1$ becomes a standard two component
unlink. We will require that the surgery components introduced to
make this happen are in $\ker\rho$.

In the neighbourhood of a crossing in $C_1$, either all arcs will be
labeled $s$, in which case we can reverse the crossing by:

\begin{equation}\label{E:changeofcrossingsA}
\psfrag{1}[c]{$s$}\psfrag{2}[c]{$s$}\psfrag{3}[c]{$s$}
\raisebox{-0.9cm}{\scalebox{0.9}{\includegraphics{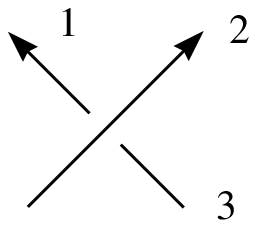}}}
\Longleftrightarrow\ \psfrag{4}[c]{$s$}\psfrag{5}[c]{$1$}
\raisebox{-0.9cm}{\scalebox{0.9}{\includegraphics{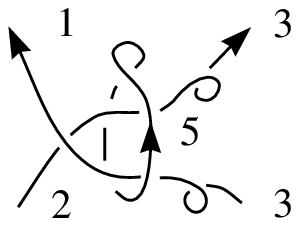}}}\ \
\ \  \mbox{and}\ \ \ \
\raisebox{-0.9cm}{\scalebox{0.9}{\includegraphics{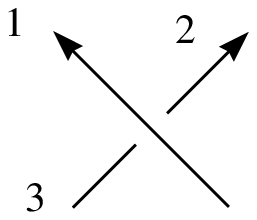}}}
\Longleftrightarrow\
\raisebox{-0.9cm}{\scalebox{0.9}{\includegraphics{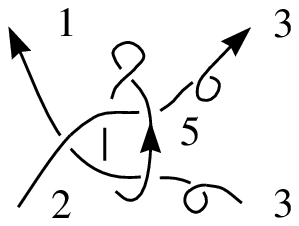}}}
\end{equation}

\noindent or the crossing will have one incident arc labeled $s$ and
another incident arc labeled $s^{-1}$, which can be dealt with by:

\begin{equation}\label{E:changeofcrossingsB}
\psfrag{1}[c]{$s$}\psfrag{2}[c]{$s^{-1}$}\psfrag{3}[c]{$s$}
\raisebox{-0.9cm}{\scalebox{0.9}{\includegraphics{untiecolourA}}}
\Longleftrightarrow\
\psfrag{2}[c]{$s^{-1}$}\psfrag{4}[c]{$s$}\psfrag{5}[c]{$1$}\psfrag{3}[c]{$s^{-1}$}
\raisebox{-0.9cm}{\scalebox{0.9}{\includegraphics{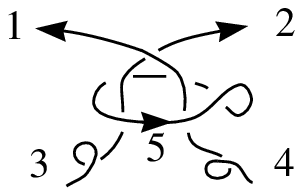}}}\
\ \ \  \mbox{and}\ \ \ \
\raisebox{-0.9cm}{\scalebox{0.9}{\includegraphics{untiecolourC}}}
\Longleftrightarrow\
\raisebox{-0.9cm}{\scalebox{0.9}{\includegraphics{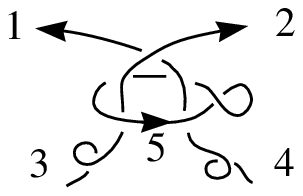}}}
\end{equation}

\noindent Thus, we can reverse any crossing on $C_1$ by surgery in
$\ker\rho$, untying $C_1$ and unlinking it from $U$.\par

Notice that the framing of the distinguished component must end up a
multiple of $n$. This is because labels induce a well-defined
representation of $\pi_1(\overline{S^3-N(K)})$ onto $\Dn$, so
contractible curves map to $1$. The framing curve of every surgery
component (in particular, the distinguished component) bounds a disc
in the corresponding torus being glued in and is thus contractible.
Since the distinguished component is labeled $s$ and is disjoint
from the knot (so its longitude maps to $1$), its framing must
vanish modulo $n$.\par

It is possible to introduce extra surgery components into the
presentation which will change that framing by $n^2$. It follows
that $k$ may be chosen so that $0\leq k < n$. To do this, coil the
distinguished surgery component into $n$ parallel strands:

\begin{equation}\label{E:dothetwistA}
\qquad\ \ \ \ \begin{minipage}{250pt}
\psfrag{s}[c]{$s$}\psfrag{t}[c]{$t$}\psfrag{p}[r]{$n$
strands}\psfrag{f}[r]{framing$=kn$}
\includegraphics[width=250pt]{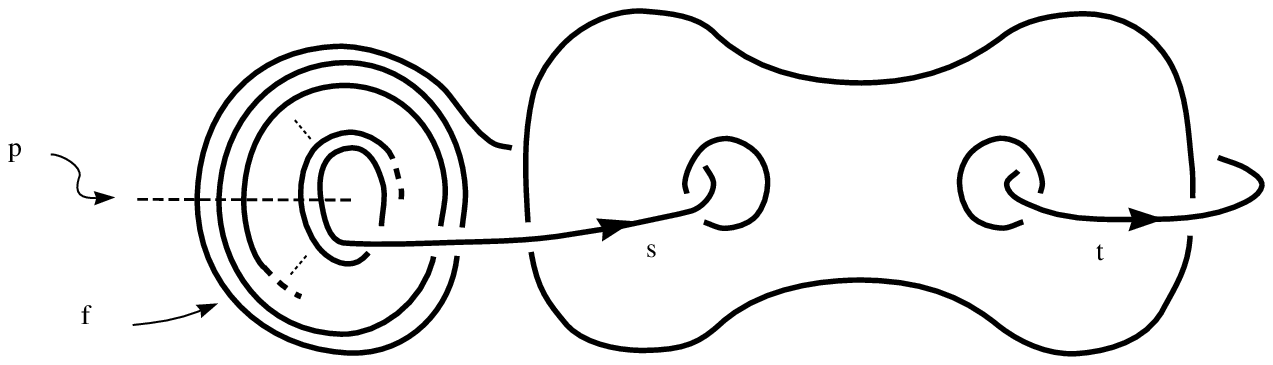}
\end{minipage}
\end{equation}

\noindent Then add a $\pm 1$--framed surgery component. (Choose $+1$
to increase framing by $n^2$, and $-1$ to decrease framing by
$n^2$.)

\begin{equation}\label{E:dothetwistB}
\qquad\ \ \ \
\begin{minipage}{250pt}
\psfrag{s}[c]{$s$}\psfrag{t}[c]{$t$}\psfrag{p}[r]{$n$
strands}\psfrag{f}[r]{framing\thinspace$=kn-n^2$}\psfrag{1}[c]{\fs$2\pi$
twist}
\includegraphics[width=250pt]{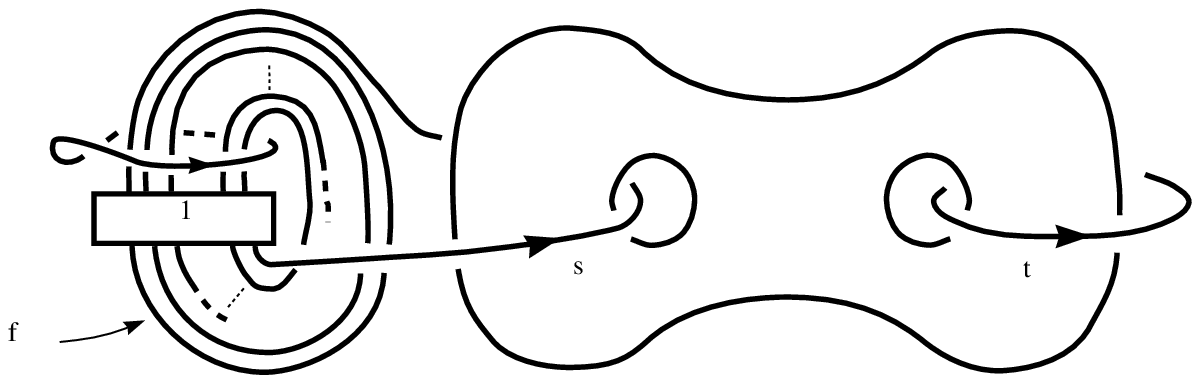}
\end{minipage}
\end{equation}

The distinguished surgery component may now be tied in a knot, but
we can untie it using surgery in $\ker\rho$, as shown in Equation
\ref{E:changeofcrossingsA}. Observe that such moves do not change
the framing of the distinguished surgery component because the
linking number of the introduced surgery components with $C_1$ is
zero.\par

All arcs of $U$ are labeled by some reflection $ts^a\in \Dn$, but by
the ambient isotopy of Figure \ref{F:innerauto} we may conjugate
this label by $s$, so that the label on the arcs of $U$ becomes
$ts^{a-2}$. Repeating $\frac{a}{2}\bmod n$ times, we obtain a
presentation for $(K,\rho)$ in which all arcs of $U$ are labeled
$t$.

\begin{figure}
\psfrag{s}{$s$}
\includegraphics[width=200pt]{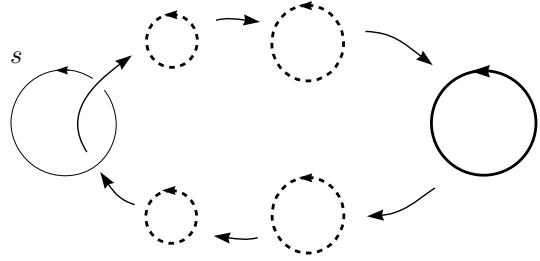}
\caption{\label{F:innerauto}Ambient isotopy of the surgery picture
to conjugate the label on $U$ by $s$.}
\end{figure}

To summarize:

\begin{prop}
Any $\Dn$-coloured knot $(K,\rho)$ has an \sds presentation,
\textit{i.e.} it has an surgery presentation $L=C_1\cup\cdots\cup
C_\mu$ such that:
\begin{itemize}
\item The distinguished surgery component $C_1$ has all its arcs labeled $s$
and has framing $kn$ with $0\leq k<n$.
\item All the other components $C_2,\ldots,C_\mu$ are in $\ker\rho$.
\item All arcs of $U$ are labeled $t$.
\item $C_1\cup U$ is the standard $2$--component unlink.
\end{itemize}
\end{prop}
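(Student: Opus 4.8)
The plan is to read this proposition as a summary of the three-step construction just carried out, and to verify that each of its four bullet conditions is delivered by one of those steps, while confirming that the earlier conditions survive the later steps. The proof is therefore an assembly rather than a new argument: I would invoke Lemma \ref{L:untielemone}, Lemma \ref{L:distinguishedcomponent} and Lemma \ref{L:handleslidelemma} in turn and track the labels and framings through each normalization. First I would produce the distinguished component and untie $K$. By surjectivity of $\rho$ there is a curve in $\pi_1\!\left(\overline{S^3-N(K)}\right)$ whose meridian maps to the rotation $s$; since $s$ is a rotation such a curve passes under an even number of arcs of $K$ and, as the arc labels all have order two, it may be taken to have linking number zero with $K$, so surgery along it is admissible. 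Setting $C_1$ to be this curve and then untying the resulting knot by $\pm1$--framed crossing changes (each of linking number zero with the knot) yields, via Lemma \ref{L:distinguishedcomponent}, a presentation of $K$ as a framed link $L$ in the complement of a standard unknot $U$ in which every arc of $C_1$ is labeled $s$ or $s^{-1}$; the induced labels on the introduced surgery components are dictated by Lemma \ref{L:untielemone}.

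Next I would concentrate the nontrivial labels onto $C_1$. Every component $C_i$ with $i>1$ has all of its arcs labeled $s^{\pm a_i}$ for a single $a_i\in\pZ$, since conjugation by the reflection labeling $K$ sends $s^b\mapsto s^{-b}$. Applying the handleslide colour rules of Lemma \ref{L:handleslidelemma}, each slide of $C_1$ over $C_i$ shifts $a_i$ by $\pm1$, so after $|a_i|$ slides in the appropriate direction $C_i$ enters $\ker\rho$; iterating over $i=2,\dots,\mu$ secures the second bullet.

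To reach the fourth bullet I would untie $C_1$ and unlink it from $U$ by reversing its crossings using the local surgeries of Equations \ref{E:changeofcrossingsA} and \ref{E:changeofcrossingsB}, whose introduced components are in $\ker\rho$ and have linking number zero with $C_1$. The framing of $C_1$ is then necessarily a multiple of $n$: its framing curve is contractible, hence maps to $1\in\Dn$, and since $C_1$ carries the label $s$ and is disjoint from $K$ its longitude maps to $1$, forcing the framing to vanish mod $n$. The $n^2$--shifting move of Equations \ref{E:dothetwistA} and \ref{E:dothetwistB} then brings the framing into the range $0\le k<n$, giving the first bullet. Finally, all arcs of $U$ carry a common reflection $ts^a$, and the ambient isotopy of Figure \ref{F:innerauto} conjugates this label by $s$, sending $ts^a\mapsto ts^{a-2}$; because $n$ is odd, $2$ is invertible mod $n$, so repeating this $\tfrac{a}{2}\bmod n$ times normalizes the label to $t$ and yields the third bullet.

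I do not expect a single deep step, since the proposition is an assembly; the genuine care lies in the \emph{compatibility} of the normalizations, and that is where I would focus. One must check that the later moves never destroy the conditions already achieved: the crossing-reversals of Equations \ref{E:changeofcrossingsA}--\ref{E:changeofcrossingsB} must keep the framing of $C_1$ a multiple of $n$ (guaranteed because the introduced components have linking number zero with $C_1$) and must leave the other components in $\ker\rho$ (guaranteed because their induced labels are trivial), and the framing-reduction must be iterable while preserving both the unlink $C_1\cup U$ and the $\ker\rho$ condition on all of $C_2,\dots,C_\mu$. Verifying this mutual noninterference --- rather than any individual geometric move --- is the main obstacle.
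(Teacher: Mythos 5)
Your proposal is correct and follows essentially the same route as the paper, which presents this proposition as a summary of the three-step construction (untying via Lemma \ref{L:distinguishedcomponent}, handleslides via Lemma \ref{L:handleslidelemma}, and the standard-form normalizations of the framing, the crossings of $C_1$, and the label on $U$). Your added attention to the mutual non-interference of the later normalizations with the earlier conditions is a reasonable elaboration of checks the paper leaves implicit, not a different argument.
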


\begin{figure}[h]
\psfrag{s}[c]{$s$}\psfrag{t}[c]{$t$}\psfrag{k}[c]{$U$}\psfrag{f}[r]{framing\thinspace$=kn$}
\qquad\includegraphics[width=3.2in]{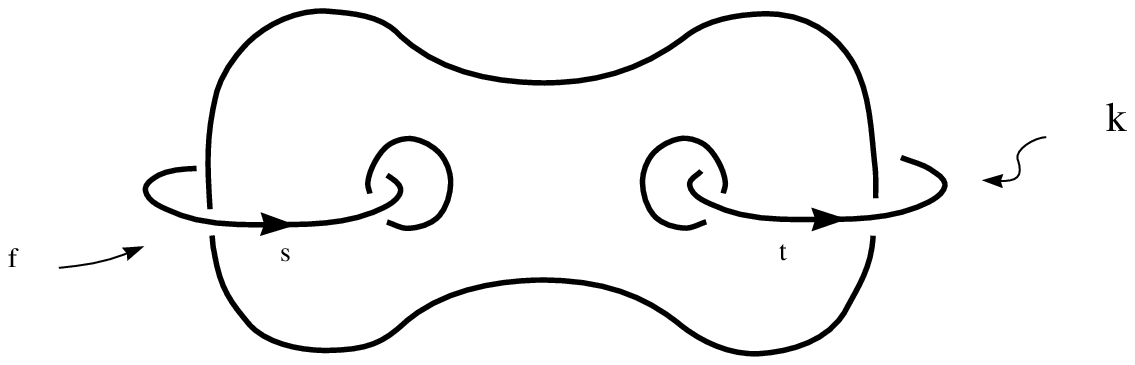}
\caption{\label{F:characteristicpres} A \sds presentation.}
\end{figure}

In Section \ref{S:lknumzero} we will additionally show that a \sds
presentation may be chosen such that the components
$C_2,\ldots,C_\mu$ in $\ker\rho$ all have linking number zero with
the distinguished component $C_1$.

\subsubsection{Example: The $D_{14}$-coloured $5_2$
knot}\label{SSS:fivetwoexample}

As an example, let's see how we obtain a \sds presentation of a
$D_{14}$-coloured $5_2$ knot.

\begin{multline*}
\begin{minipage}{111pt}
\psfrag{a}[c]{$t$}\psfrag{b}[c]{$ts^5$}\psfrag{c}[c]{$ts^3$}\psfrag{d}[c]{$ts$}\psfrag{e}[c]{$ts^2$}
\includegraphics[width=111pt]{fivetwoAA}
\end{minipage}\ \ \ \overset{\text{Surgery}}{\begin{minipage}{30pt}
\includegraphics[width=30pt]{fluffyarrow}
\end{minipage}}\ \ \ \
\begin{minipage}{150pt}
\psfrag{a}[c]{$ts^2$}\psfrag{b}[c]{$t$}\psfrag{c}[c]{$ts^5$}\psfrag{d}[c]{$ts^2$}\psfrag{e}[c]{$ts^4$}\psfrag{f}[c]{$ts^6$}
\psfrag{1}[c]{$s^{-1}$}\psfrag{2}[c]{$s$}
\includegraphics[width=130pt]{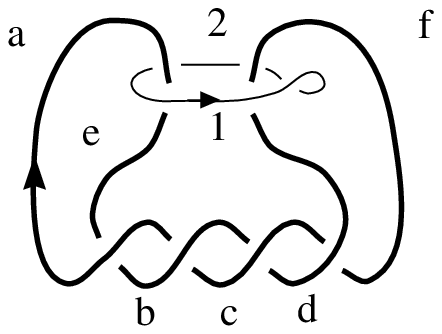}
\end{minipage}\\
\overset{\text{Isotopy}}{\begin{minipage}{30pt}
\includegraphics[width=30pt]{fluffyarrow}
\end{minipage}}\quad
\begin{minipage}{130pt}
\psfrag{a}[c]{$s$}\psfrag{b}[c]{$s$}\psfrag{c}[c]{$s^{-1}$}\psfrag{d}[c]{$s$}\psfrag{e}[c]{$s^{-1}$}\psfrag{f}[c]{$s^{-1}$}
\psfrag{1}[c]{$ts^{2}$}\psfrag{2}[c]{$t$}
\includegraphics[width=125pt]{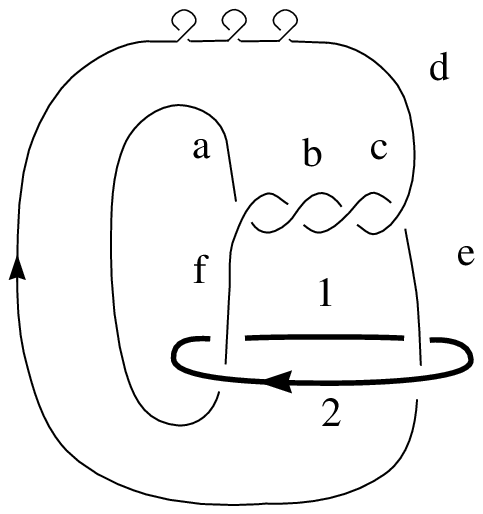}
\end{minipage}\quad
\overset{\text{Surgery}}{\begin{minipage}{30pt}
\includegraphics[width=30pt]{fluffyarrow}
\end{minipage}}\quad
\begin{minipage}{130pt}
\psfrag{a}[c]{$s$}\psfrag{b}[c]{$s^{-1}$}\psfrag{c}[c]{$s$}\psfrag{d}[c]{$s^{-1}$}\psfrag{e}[c]{$s$}\psfrag{f}[c]{$s^{-1}$}\psfrag{g}[c]{}\psfrag{h}[c]{$s^{-1}$}
\psfrag{1}[c]{$ts^{2}$}\psfrag{2}[c]{$t$}\psfrag{3}[c]{$1$}\psfrag{4}[c]{$1$}
\includegraphics[width=130pt]{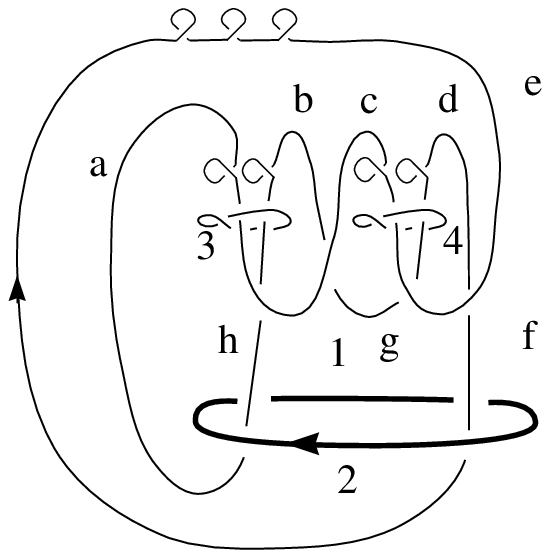}
\end{minipage}\\\ \\
\overset{\text{Isotopy}}{\begin{minipage}{30pt}
\includegraphics[width=30pt]{fluffyarrow}
\end{minipage}}\qquad\qquad\qquad\quad
\begin{minipage}{3.2in}
\psfrag{S}[c]{$s$}\psfrag{t}[c]{$t$}\psfrag{O}[c]{$U$}\psfrag{f}[r]{framing\thinspace$=-7$}
\includegraphics[width=3.2in]{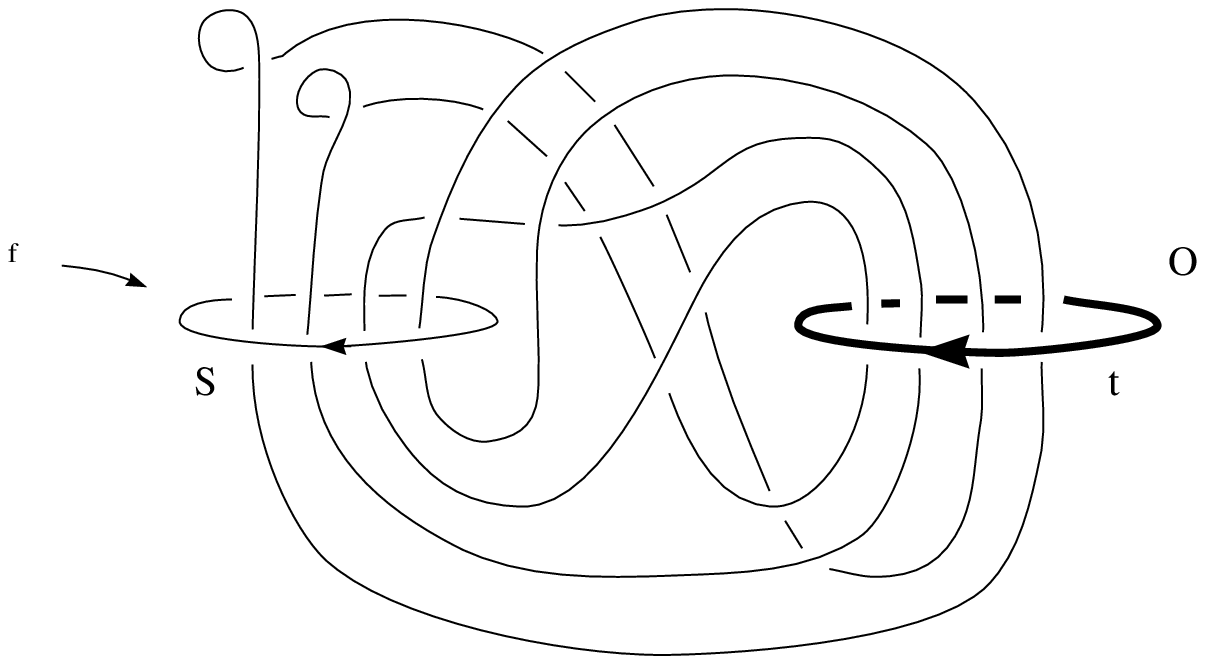}
\end{minipage}
\end{multline*}

\subsection{Constructing the cover}\label{S:coverconstruct}

Take a \sds presentation of some $\Dn$-coloured knot $(K,\rho)$. It
consists of a framed link $L$ in a genus two handlebody $H$,
embedded into a link in the way shown in Figure
\ref{F:characteristicpres}. Our goal in this section is to lift this
picture to a surgery presentation of $M$, the $n$--fold dihedral
covering space of $S^3$ branched over the knot $K$ whose monodromy
is given by $\rho$.

Our starting point is Figure \ref{F:Xconstructor}, which tells us
how to use the \sds presentation to construct the knot complement
$X\ass\overline{S^3-N(K)}$. This is achieved by doing surgery on
$L$, attaching $2$--handles to the curves $A$ and $B$, and finishing
by attaching a ball to the resulting $S^2$ boundary component.

\begin{figure}[h]
\psfrag{m}[c]{$m$}\psfrag{l}[c]{$l$}\psfrag{A}[c]{$A$}\psfrag{B}[c]{$B$}\psfrag{p}[r]{$n$
strands}
\includegraphics[width=3.2in]{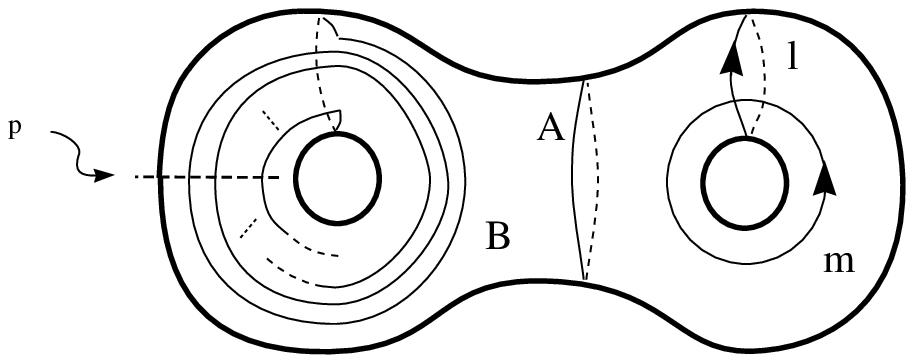}
\caption{How to construct $X\ass\
\overline{S^3-N(K)}$.\label{F:Xconstructor}}
\end{figure}

\noindent The knot complement $X$ comes equipped with a
representation $\rho\co \pi_1\left(X\right)\twoheadrightarrow \Dn\,$
determined by the labels $s$ and $t$.\par

On the boundary of $H$ we have also marked the meridian $m$ and a
choice of longitude $l$ of $K$. This data will be referred to below
as the \emph{peripheral markings}. We recover $S^3$ with the knot
$K$ embedded in it by gluing a solid torus $N(K)$, displayed in
Figure \ref{F:neighbourhoodofK}, into the boundary of $X$ (a torus),
so as to match up the curves $m$ and $l$.

\begin{figure}[h]
\scalebox{0.7}{\includegraphics{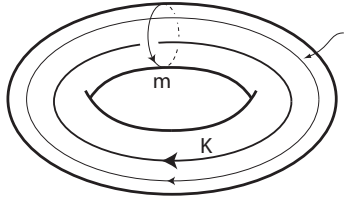}}
\caption{\label{F:neighbourhoodofK}The solid torus $N(K)$ with
embedded knot $K$.}
\end{figure}

With these preliminaries in hand, we can now describe the
construction of $M$. The first step is to construct
$\widetilde{X}_\rho$, which is defined to be the (unbranched)
covering space of $X$ whose monodromy is specified by $\rho$. The
following steps construct $\widetilde{X}_\rho$.
\begin{enumerate}
\item{Take $\widetilde{H}_\rho$, the $n$--fold covering space of
$H$ whose monodromy is specified by $\rho$. Lift the surgery link in
$H$ to $\widetilde{H}_\rho$ and do surgery on that link.}
\item{Lift $A$ and $B$, the $2$--handle attaching circles, to systems
of curves $\{A_i\}_{i=1}^n$ and $\{B_i\}_{i=1}^n$ on
$\widetilde{H}_\rho$.
} \item{Attach $2$--handles to these systems of curves.}
\item{Attach a ball to each of the $n$ resulting $S^2$ boundary
components.}
\end{enumerate}

Figure \ref{F:systemlift} shows how the attaching circles and
peripheral markings lift to $\widetilde{H}_\rho$, in the special
case that $n=7$. The general case is clear from this picture.

Consider now the boundary of $\widetilde{X}_\rho$, the space we have
just constructed. Inspecting Figure \ref{F:systemlift} we observe
that it consists of $\frac{n+1}{2}$ tori:
\[
\partial\left(\widetilde{X}_\phi\right) = T_1\sqcup T_2 \sqcup
\ldots \sqcup T_{\frac{n+1}{2}}.
\]
The torus $T_1$ is marked as shown in Figure \ref{F:othertori} on
the left. Under the restriction of the covering map
$\widetilde{X}_\rho \rightarrow X$ to this boundary component, $T_1$
is a one--sheeted covering of $\partial N(K)$.
The other tori, $T_i$ where $i$ runs from $2$ to $\frac{n+1}{2}$,
are marked as shown in Figure \ref{F:othertori} on the right. These
tori give two--sheeted coverings of $\partial N(K)$.
\begin{figure}[h]
\[\scalebox{0.7}{\includegraphics{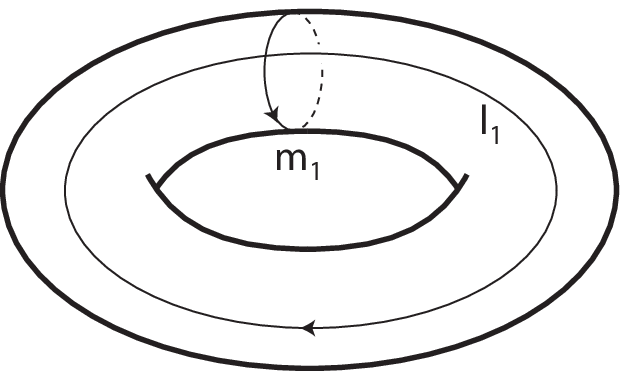}}\qquad\ \ \scalebox{0.7}{\includegraphics{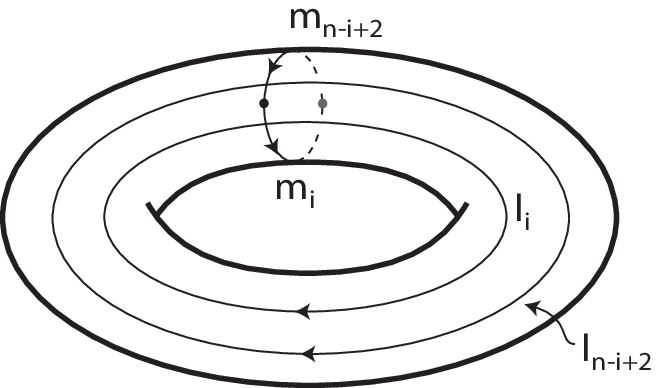}}\]
\caption{\label{F:othertori}The torus $T_1$ (on the left) and a
torus $T_i$ for some $2\leq i\leq \frac{n+1}{2}$ (on the right),
together with their markings.}
\end{figure}

The \textbf{branched} irregular dihedral covering space $M$,
together with the covering link
$\left\{\widetilde{K}_i\right\}_{i=1}^{\frac{n+1}{2}}$, is obtained
from $\widetilde{X}_\rho$ by:
\begin{enumerate}
\item{Gluing a copy of $N(K)$ into $T_1$ so as to match $m$ to
$m_1$ and $l$ to $l_1$.} \item{For each $i$ such that $2\leq i\leq
\frac{n+1}{2}$, gluing a copy of $N(K)$ into $T_i$ so as to match
$m$ to the curve $m_im_{n-i+2}$, and $l$ to either $l_i$ or
$l_{n-i+2}$.}
\end{enumerate}
This completes the construction of $M$.

\begin{figure}[t]
\psfrag{1}[c]{\LARGE$kn$}\psfrag{2}[c]{\LARGE$kn$}\psfrag{3}[c]{\LARGE$kn$}\psfrag{4}[c]{\LARGE$kn$}\psfrag{5}[c]{\LARGE$kn$}\psfrag{6}[c]{\LARGE$kn$}\psfrag{7}[c]{\LARGE$kn$}%
\psfrag{a}[c]{\large$A_1$}\psfrag{b}[c]{\large$A_2$}\psfrag{c}[c]{\large$A_3$}\psfrag{d}[c]{\large$A_4$}\psfrag{e}[c]{\large$A_5$}\psfrag{f}[c]{\large$A_6$}\psfrag{g}[c]{\large$A_7$}%
\psfrag{A}[c]{$m_1$}\psfrag{B}[c]{$m_2$}\psfrag{C}[c]{$m_3$}\psfrag{D}[c]{$m_4$}\psfrag{Z}[c]{$m_5$}\psfrag{Y}[c]{$m_6$}\psfrag{X}[c]{$m_7$}%
\psfrag{E}[c]{$I_1$}\psfrag{F}[c]{$I_2$}\psfrag{G}[c]{$I_3$}\psfrag{H}[c]{$I_4$}\psfrag{I}[c]{$I_5$}\psfrag{J}[c]{$I_6$}\psfrag{K}[c]{$I_7$}%
\scalebox{0.7}{\includegraphics{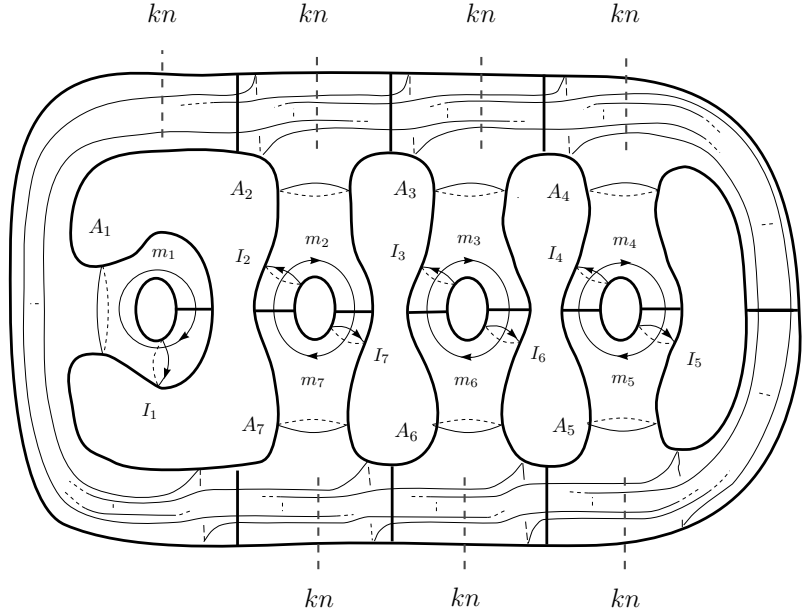}}
\caption{\label{F:systemlift} The lifts of the attaching circles and
peripheral markings to $\widetilde{H}_\rho$, in the case that
$n=7$.}
\end{figure}

Our task is to turn the construction we have just detailed into a
surgery presentation for $M$. Consider the sequence below, where the
index $i$ runs from $2$ to $\frac{n+1}{2}$, and $j=n-i+2$.

\begin{multline*}
\begin{minipage}{145pt}
\psfrag{c}[c]{$A_j$}\psfrag{f}[c]{$A_i$}\psfrag{J}[c]{$I_j$}\psfrag{G}[c]{$I_i$}\psfrag{C}[c]{$m_j$}\psfrag{Y}[c]{$m_i$}
\includegraphics[width=145pt]{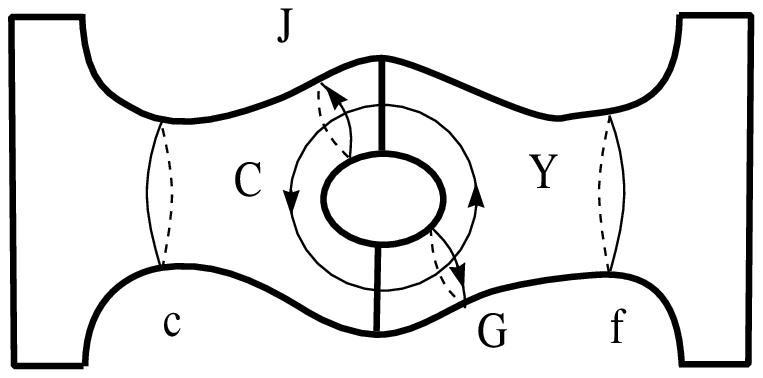}
\end{minipage}\quad\
\begin{minipage}{20pt}
\includegraphics[width=20pt]{fluffyarrow}
\end{minipage}\quad\
\begin{minipage}{145pt}
\psfrag{c}[c]{$A_j$}\psfrag{f}[c]{$A_i$}\psfrag{J}[c]{$I_j$}\psfrag{G}[c]{$I_i$}\psfrag{C}[c]{$m_j$}\psfrag{Y}[c]{$m_i$}
\includegraphics[width=145pt]{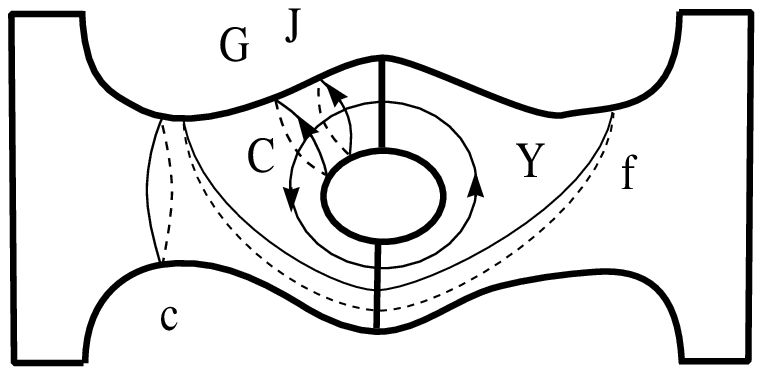}
\end{minipage}\\\ \\
\begin{minipage}{20pt}
\includegraphics[width=20pt]{fluffyarrow}
\end{minipage}\quad
\begin{minipage}{145pt}
\psfrag{c}[c]{$A_j$}\psfrag{f}[c]{}\psfrag{J}[c]{$I_j$}\psfrag{G}[c]{$I_i$}\psfrag{C}[c]{$m_j$}\psfrag{Y}[c]{$m_i$}
\includegraphics[width=145pt]{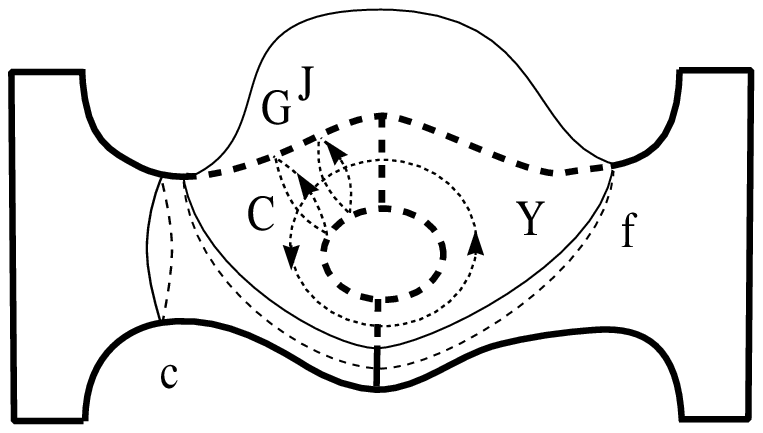}
\end{minipage}\quad\begin{minipage}{20pt}
\includegraphics[width=20pt]{fluffyarrow}
\end{minipage}\quad
\begin{minipage}{145pt}
\psfrag{c}[c]{$A_j$}\psfrag{k}[c]{$\tilde{K}_i$}
\includegraphics[width=145pt]{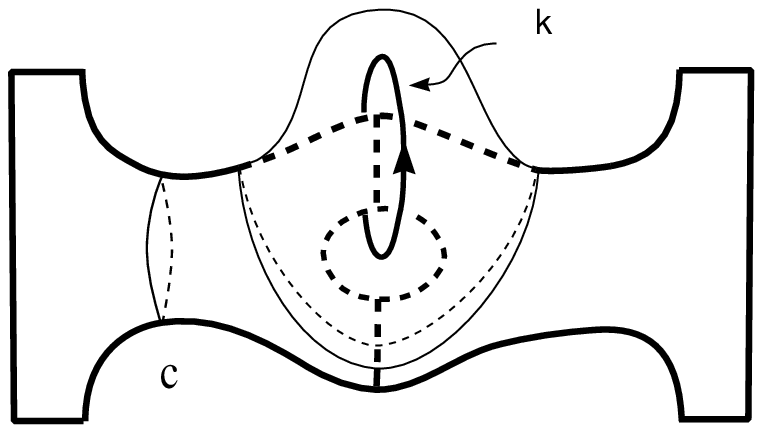}
\end{minipage}
\end{multline*}\\

The first move is to slide the attaching circle $A_i$ over the
attaching circle $A_j$. Before we do that, we'll get the longitude
marking $l_i$ out of the way by sliding it over $A_j$ first. Next we
attach a $2$--handle to $A_i$. Observe that the result can be
embedded in $S^3$, and that the torus $T_i$ is now embedded in this
diagram. Glue a copy of $N(K)$ into $T_i$ in the required way
(matching $m$ to $m_im_j$). In a similar way we can immediately
attach a $2$--handle to $A_1$ and glue a copy of $N(K)$ into $T_1$.
These are the three steps which we carried out in the sequence
above.\par


After the above sequence, if we attach $2$--handles to to the
circles $A_{\frac{n+3}{2}}$ through $A_n$ and another $2$--handle to
$B_1$, then the boundary of the space is a copy of $S^2$ (it is
connected and of genus $0$). Call this boundary $Y$. A $2$--sphere
$S^2$ can only bound a ball (Sch\"{o}nflies Theorem) so plugging $Y$
with a $3$--ball right away is the same as attaching $2$--handles to
$B_2,\ldots,B_{n}\subset Y$ and then plugging what is left of the
boundary with $3$--balls. In other words, we can discard
$B_2,\ldots,B_n$ without changing the result.\par

In the same way, we can add extra attaching circles for $2$--handles
into $Y$ without changing the result. Let's then attach $2$--handles
into $Y$ to cut the complement in $S^3$ of the handlebody into solid
tori, in the way indicated in Figure \ref{F:finalfigure}. The
attaching circles of the extra $2$--handles are labeled
$E_1,\ldots,E_{\frac{n+1}{2}}$ in the figure.

We are done. The space constructed is in the complement of a
$\frac{n+1}{2}$ component unlink in the three--sphere, and attaching
the remaining $2$--handles and balls is equivalent to doing surgery
on that unlink, in precisely the way detailed in Theorem
\ref{T:untyingmethod}.\par

To illustrate with an example, the surgery presentation for the
dihedral branched covering space and covering link for the
$D_{14}$-coloured $5_2$ knot considered in Section
\ref{SSS:fivetwoexample} is as given in Figure \ref{F:fivetwofinal}.

\begin{figure}[h]
\psfrag{1}[c]{\small$\tilde{U}_1$}\psfrag{2}[c]{\small$\tilde{U}_2$}\psfrag{3}[c]{\small$\tilde{U}_3$}\psfrag{4}[c]{\small$\tilde{U}_4$}
\psfrag{a}[c]{\Small$0$}\psfrag{b}[c]{\Small$0$}\psfrag{c}[c]{\Small$0$}\psfrag{h}[l]{framing\
$=-7$}
\includegraphics[width=250pt]{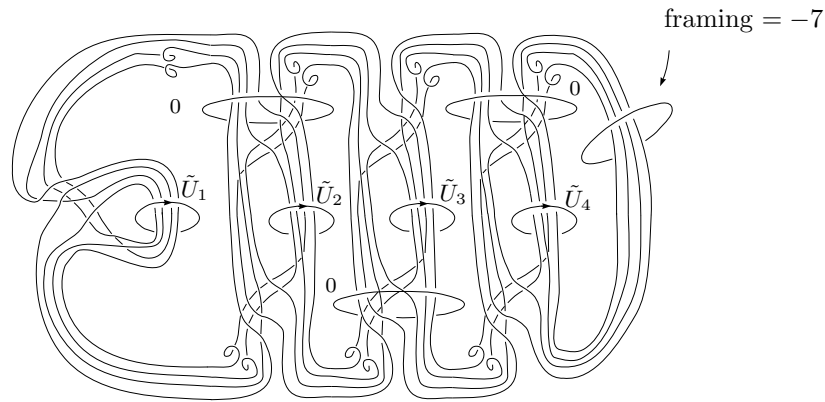}
\caption{\label{F:fivetwofinal} A surgery presentation for the
dihedral covering space and covering link of the $D_{14}$-coloured
$5_2$ knot of Section \ref{SSS:fivetwoexample}.}
\end{figure}

\begin{figure}
\psfrag{1}[c]{\LARGE$k$\
strands}\psfrag{2}[c]{\Large$A_5$}\psfrag{3}[c]{\Large$A_4$}\psfrag{a}[c]{\Large$E_1$}\psfrag{b}[c]{\Large$E_2$}\psfrag{c}[c]{\Large$B_1$}
\scalebox{0.72}{\includegraphics{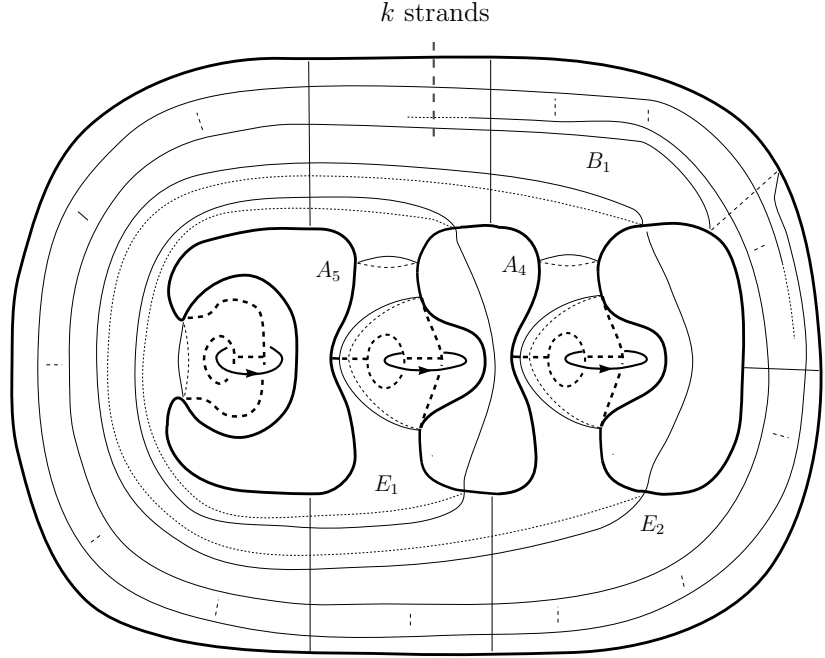}}
\caption{\label{F:finalfigure} The final diagram - after we have
discarded $B_2$ through $B_n$, and attached extra $2$--handles $E_1$
thorough $E_{\frac{n+1}{2}}$ so as to cut the complement of the
handlebody into solid tori. }
\end{figure}

\newpage
\section{Band projection approach}\label{S:Seifertapproach}

In this approach we obtain a surgery presentation of a
$\Dn$-coloured knot $(K,\rho)$ as a link $L$ consisting of
$\pm1$--framed unknotted surgery components in $\ker\rho$ which live
in the complement of an element in a complete set of base-knots in
$S^3$. We then lift this presentation to a surgery presentation of
the branched dihedral cover $M$. A by-product of this approach is a
proof of a conjecture that two $\Dn$-coloured knots are
$\rho$--equivalent if and only if they have the same coloured
untying invariant.

\subsection{Tools}
\subsubsection{The surface data}\label{SSS:ColouringVector}
Take a $\Dn$-coloured knot $(K,\rho)$ and choose $F$ a Seifert
surface for $K$. Let $x_{1},\ldots,x_{2g}$ be a basis of $H_{1}(F)$
and let $(\xi_{1},\ldots,\xi_{2g})$ be the associated basis for
$H_{1}(S^{3}-F)$ uniquely characterized by the condition that
$\mathrm{Link}(x_i,\xi_j)=\delta_{ij}$ (see \textit{e.g.}
\cite[Definition 13.2]{BZ03}). A curve representing $\xi_i$
under-crosses an even number of arcs of the knot diagram, therefore
the representative $\tilde{\xi}_i$ of $\xi_i$ in
$\pi_1\left(S^3-K\right)$ is mapped by $\rho$ to the product of an
even number of reflections, \textit{i.e.} a rotation
$s^a\in\Cn\subset \Dn$. This image is independent of which
representative $\tilde{\xi}_i$ of $\xi_i$ we chose because $\Cn$ is
commutative. Let the \emph{colouring vector} of $(K,\rho)$
associated to the basis $\set{x_1,\ldots,x_{2g}}$ of $H_1(F)$ be the
vector of these images
$$
\vec{v}\ass (v_1,\ldots,v_{2g})^T \ass
\left(\rho(\xi_1),\ldots,\rho(\xi_{2g})\right)^T\in
\left(\Cn\right)^{2g}.
$$
\noindent The colouring vector determines $\rho$ restricted to
$\pi_1(S^3-F)$, and so, via the HNN construction over the Seifert
surface, determines $\rho$ up to an inner automorphism of $\Dn$ (the
details of the construction are recalled in the proof of Lemma
\ref{L:VMV} below). Actually, a small trick shows that every such
inner automorphism is realized by an isotopy of the diagram, so the
colouring vector determines $\rho$ uniquely \cite[Proof of Lemma
4]{Mos06b}.\par

Let $\tau^{\pm}$ denote the pushoff from $F$ in the direction of its
positive (negative) normal (as determined by the orientation of the
knot), and let $\M=\left(\mathrm{Link}(\tau^- x_i,x_j)\right)_{1\leq
i,j\leq 2g}$ be a Seifert matrix for $K$ with respect to
$\left\{x_1,\ldots,x_{2g}\right\}$.

We call the pair $(\M,\vec{v})$ the \emph{surface data} for the
$\Dn$-coloured knot $(K,\rho)$ corresponding to a choice of Seifert
surface $F$ and a choice of basis for $H_1(F)$. The surface data
satisfies the following property:

\begin{lem}\label{L:VMV}
Let $\vec{w}\ass \left(w_1,\ldots,w_{2g}\right)^T\in\mathds{Z}^{2g}$
be a vector of integers satisfying $v_i= s^{w_i}$. For any vector of
integers $\vec{z}\ass (z_1,\ldots,z_{2g})^T\in\mathds{Z}^{2g}$ we
have
\begin{equation*}
\vec{z}^{\ T}\cdot (\M+\M^T)\cdot \vec{w} \equiv0\bmod n,
\end{equation*}
\noindent and in particular
\begin{equation*}
\vec{w}^{\ T}\cdot \M\cdot \vec{w} \equiv0\bmod n.
\end{equation*}
\end{lem}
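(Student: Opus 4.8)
The plan is to convert the defining relation $v_i=s^{w_i}$ into two computations of $\rho$ on the oppositely-oriented pushoffs of the basis curves $x_i$, and then to play these against each other using the fact that a meridian of $K$ is sent by $\rho$ to a reflection, which inverts rotations under conjugation. Write $x_i^{\pm}\ass\tau^{\pm}x_i$ for the two pushoffs; these are disjoint from $K$, so $\rho$ is defined on them, and they live in $S^3-F$. First I would record the coordinates of $[x_i^{\pm}]$ in the basis $\set{\xi_1,\ldots,\xi_{2g}}$ of $H_1(S^3-F)$. Since the linking pairing is dual to the bases $\set{x_j}$ and $\set{\xi_j}$, the $\xi_j$-coordinate of $[x_i^-]$ is $\mathrm{Link}(x_j,x_i^-)=\mathrm{Link}(x_i^-,x_j)=\M_{ij}$, so $[x_i^-]=\sum_j\M_{ij}\xi_j$; similarly, using $\mathrm{Link}(x_i^+,x_j)=\mathrm{Link}(x_j^-,x_i)=(\M^T)_{ij}$, one gets $[x_i^+]=\sum_j(\M^T)_{ij}\xi_j$. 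Because every $\xi_j$ maps to the rotation $s^{w_j}$ and $\Cn$ is abelian, applying $\rho$ gives $\rho(x_i^-)=s^{(\M\vec w)_i}$ and $\rho(x_i^+)=s^{(\M^T\vec w)_i}$.

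The topological input is the HNN description of $\pi_1(S^3-K)$ over the Seifert surface that is already invoked in the statement of the lemma: a meridian $\mu$ of $K$ serves as the stable letter and conjugates the $\tau^+$-image of a loop on $F$ to its $\tau^-$-image, so that $x_i^-=\mu\,x_i^+\,\mu^{-1}$ in $\pi_1(S^3-K)$. Now $\rho(\mu)$ is a reflection $ts^a$, and conjugation by any reflection sends a rotation $s^b$ to $s^{-b}$. Hence $\rho(x_i^-)=s^{-(\M^T\vec w)_i}$. Comparing this with $\rho(x_i^-)=s^{(\M\vec w)_i}$ from the first paragraph yields $(\M\vec w)_i\equiv-(\M^T\vec w)_i\bmod n$ for every $i$, that is, $(\M+\M^T)\vec w\equiv0\bmod n$ as a vector. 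Pairing on the left with an arbitrary integer vector $\vec z$ then gives the first assertion, $\vec z^{\,T}\cdot(\M+\M^T)\cdot\vec w\equiv0\bmod n$.

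The second assertion is immediate from the first: taking $\vec z=\vec w$ gives $\vec w^{\,T}(\M+\M^T)\vec w=2\,\vec w^{\,T}\M\vec w\equiv0\bmod n$, where I have used that the scalar $\vec w^{\,T}\M\vec w$ equals its own transpose $\vec w^{\,T}\M^T\vec w$. Since $n$ is odd, $2$ is invertible modulo $n$, so $\vec w^{\,T}\M\vec w\equiv0\bmod n$. I expect the main obstacle to be making the conjugation relation $x_i^-=\mu\,x_i^+\,\mu^{-1}$ fully precise, namely keeping careful track of basepoints and orientation conventions in the HNN presentation over $F$ (which pushoff is conjugated to which, and by which meridian). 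Reassuringly, the final conclusion is insensitive to this choice: the opposite convention would instead give $\rho(x_i^+)=s^{-(\M\vec w)_i}$, hence $(\M^T\vec w)_i\equiv-(\M\vec w)_i$, producing the same symmetric identity $(\M+\M^T)\vec w\equiv0\bmod n$, so only the symmetrized form $\M+\M^T$ enters and the sign ambiguity is harmless.
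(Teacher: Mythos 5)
Your proof is correct and follows essentially the same route as the paper's: both rest on the HNN presentation of $\pi_1(S^3-K)$ over the Seifert surface, the fact that $\rho$ sends the meridian to a reflection (which inverts rotations under conjugation), and the linking-number duality between the bases $\{x_i\}$ and $\{\xi_j\}$; your component-wise derivation of $(\M+\M^T)\vec w\equiv 0\bmod n$ is just the paper's computation with $\alpha=\sum w_ix_i$ unpacked. You are in fact slightly more careful than the paper in explicitly invoking the oddness of $n$ to cancel the factor of $2$ in the final step.
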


\begin{proof}
The proof is essentially the same as \cite[proof of Proposition
1.1]{CS84}. Because $(\xi_{1},\ldots,\xi_{2g})$ is a basis for
$H_{1}(S^{3}-F)$ and because $\Cn$ is abelian, the colouring vector
$\vec{v}$ determines the map $\bar{\rho}\co \pi_1(S^3-F)\to \Cn$
induced by $\rho$ via the condition
$\bar{\rho}(y)=s^{\mathrm{Link}(y,\alpha)}$, where
\begin{equation*}
\alpha\ass \sum_{i=1}^{2g}w_{i}x_i.
\end{equation*}

The extension of $\pi_1(S^3-F)$ to $\pi_1(S^3-K)$ is given by adding
a generator $m$ corresponding to a choice of meridian of $K$, modulo
the relation
$$
m\cdot \tau^+ z\cdot m^{-1}= \tau^{-}z
$$
\noindent for all $z\in \pi_1(F)$, corresponding to the fact that
the path $m\cdot z\cdot m^{-1}z^{-1}$ is contractible in
$\pi_1(S^3-K)$ (the HNN construction).\par

Because we know that $\bar{\rho}$ extends to $\rho$ and that
$\rho(m)$ is a reflection, it follows that
$$
\rho(\tau^- z)=\rho(m\cdot \tau^+ z\cdot m^{-1})=
\rho(m)\cdot\rho(\tau^+ z)\cdot\rho(m)= \rho(-\tau^+ z)
$$

\noindent Therefore

$$\mathrm{Link}(\tau^+ z,\alpha)= \mathrm{Link}(-\tau^-
z,\alpha) \bmod n$$

 \noindent The term on the right equals
$-\mathrm{Link}(\tau^+\alpha,z)$. Therefore
$$
z\cdot (L_\M+L_{\M^T})\cdot\alpha= 0\bmod n
$$
where $L_\M$ and $L_{\M^T}$ are the linking pairings of $\M$ and of
$\M^T$ correspondingly in $S^3$. Setting $z=\sum_{i=1}^{2g}z_{i}x_i$
gives
\begin{equation*}
\vec{z}^{\ T}\cdot(\M+\M^T)\cdot \vec{w}\equiv 0\bmod n
\end{equation*}
\noindent
and setting $z=\alpha$ gives
\begin{equation*}
\vec{w}^{\ T}\cdot(\M+\M^T)\cdot \vec{w}=2\vec{w}^{\ T}\cdot \M\cdot
\vec{w} \equiv0\bmod n.
\end{equation*}
\end{proof}

\begin{rem}
The vectors $\vec{w}$ and $\vec{w}\bmod n$ are called the
$p$--colouring vector in \cite{LiWal08} and in \cite{Mos06b}
respectively. When $\alpha$ is represented by a simple closed curve,
that curve is called a \emph{mod $p$ characteristic knot of
$(K,\rho)$} in \cite{CS84}.
\end{rem}

\subsubsection{The coloured untying invariant}

In \cite[Section 6]{Mos06b} it was shown that the following
expression

$$
\mathrm{cu}(K,\rho)=\frac{2(\vec{w}^{\thinspace T}\cdot \M\cdot
\vec{w})}{n}\bmod n
$$
\noindent depends neither on the choice of Seifert surface $F$ nor
on the choice of basis for $H_1(F)$. Hence it is an invariant of
$\Dn$-coloured knots. It is also shown that this is a non-trivial
$\pZ$--valued invariant of $\Dn$-coloured knots in $S^3$ which is
constant on $\rho$--equivalence classes. A homological version of
this invariant seems to provide a generalization to $\Dn$-coloured
knots in more general $3$--manifolds \cite{Mos06b,LiWal08}.\par

The culmination of this section is to show that \textbf{two knots
are $\rho$--equivalent if and only if they have the same untying
invariant.}

\subsubsection{Band Projection}
Any knot has a \emph{band projection} (see for instance
\cite[Proposition 8.2]{BZ03}). This is a projection of the following
form: 


\begin{figure}[htbp]
\psfrag{1}[c]{$B_{1}$}\psfrag{2}[c]{$B_{2}$}\psfrag{3}[c]{$B_3$}
\psfrag{4}[c]{$B_{4}$}\psfrag{5}[c]{$B_{2g-1}$}\psfrag{6}[c]{$B_{2g}$}\psfrag{D}{\large$D^2$}
\psfrag{d}[c]{\Huge$\mathbf{\ldots}$}\psfrag{x}{$x_{1}\quad$}\psfrag{c}{$\xi_{1}$}\psfrag{e}{$\xi_2$}\psfrag{y}{$x_2$}
\includegraphics[width=4.5in]{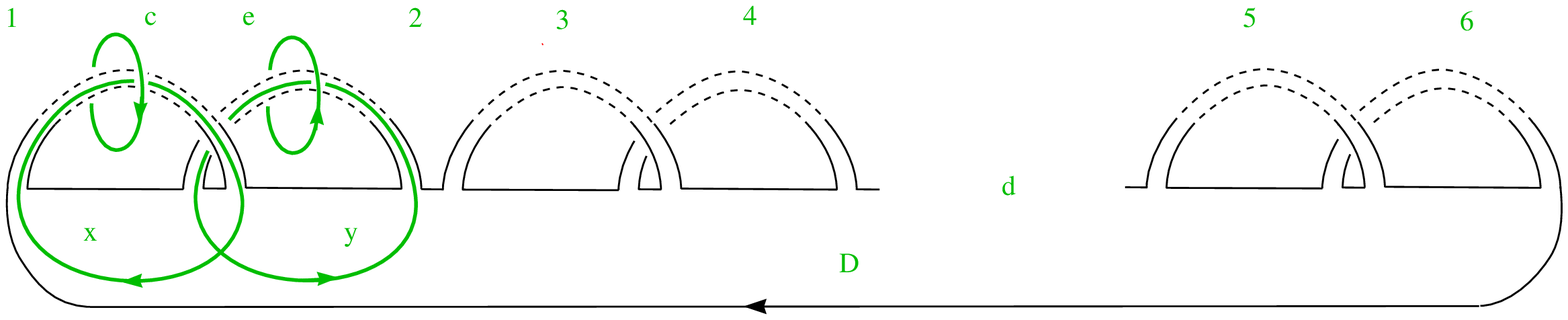}
\caption{A band projection of a knot.} \label{F:bandproj}
\end{figure}

Pairs of bands $B_{2i-1}$ and $B_{2i}$ for $i=1,\ldots,g$ will be
called \emph{twin bands}. We may choose a band projection such that
that knot is oriented as shown in the figure.\par

A knot in band projection comes equipped with a canonical choice of
a Seifert surface $F$ and a choice of basis for $H_{1}(F)$: let
$x_{1},\ldots,x_{2g}$ be elements of $H_1(F)$ such that for each
$1\leq i\leq 2g$ the class $x_i$ is represented by a curve in $F$
which threads once through the band $B_i$, with orientations as
determined by Figure \ref{F:bandproj}. Recall that the associated
basis $\xi_{1},\ldots,\xi_{2g}$  for $H_1(S^3-F)$ is determined by
the condition $\mathrm{Link}(x_i,\xi_j)=\delta_{ij}$. In this case
the class $\xi_i$ is represented by the appropriately oriented
boundary of a small disc which the band intersects the interior of
transversely as shown in Figure \ref{F:bandproj}.

The surface data of a knot in band projection refers to the Seifert
matrix and colouring vector for this canonical choice of basis.

\subsubsection{Band slides}
At the heart of this approach are moves which allow us to realize
algebraic manipulations of the surface data by ambient isotopies
which modify the choice of band projection of a fixed $\Dn$-coloured
knot.

We say that some band projection is obtained from another by doing a
\emph{band slide} of band $B_{2i-1}$ counterclockwise over band
$B_{2i}$ if it is obtained by the following sequence of ambient
isotopies:
\begin{equation*}\label{E:bandslide}
\begin{minipage}{80pt}
\includegraphics[width=80pt]{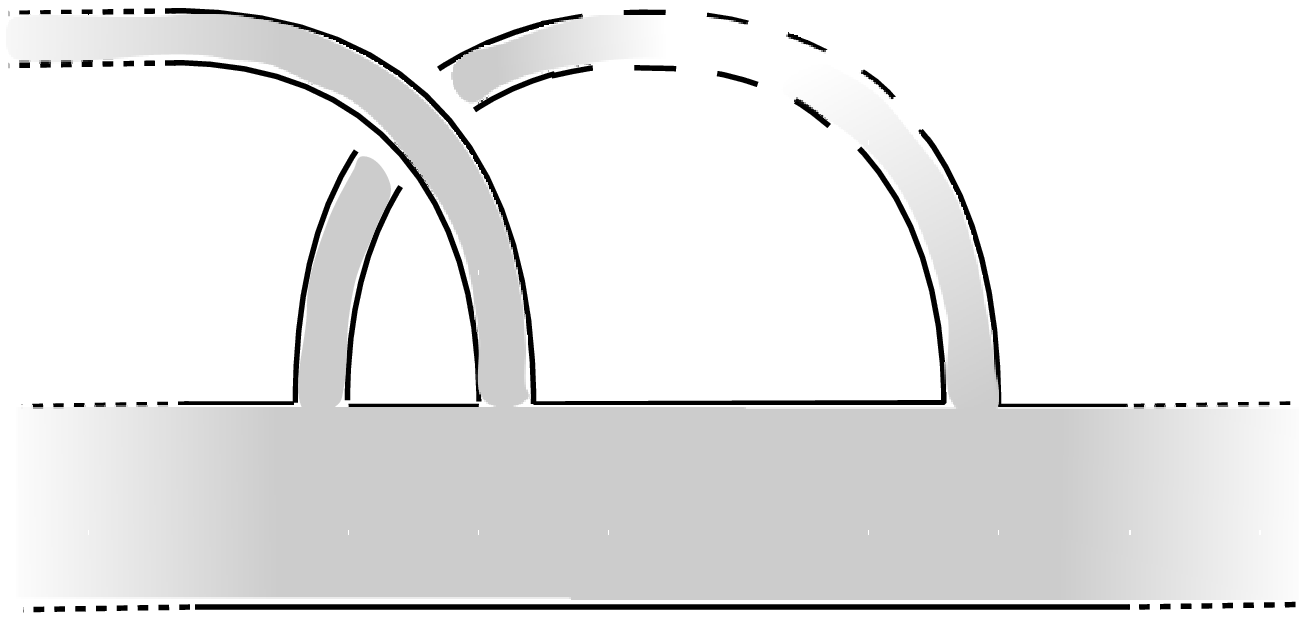}
\end{minipage}\quad\Too\hspace{7pt}
\begin{minipage}{80pt}
\includegraphics[width=80pt]{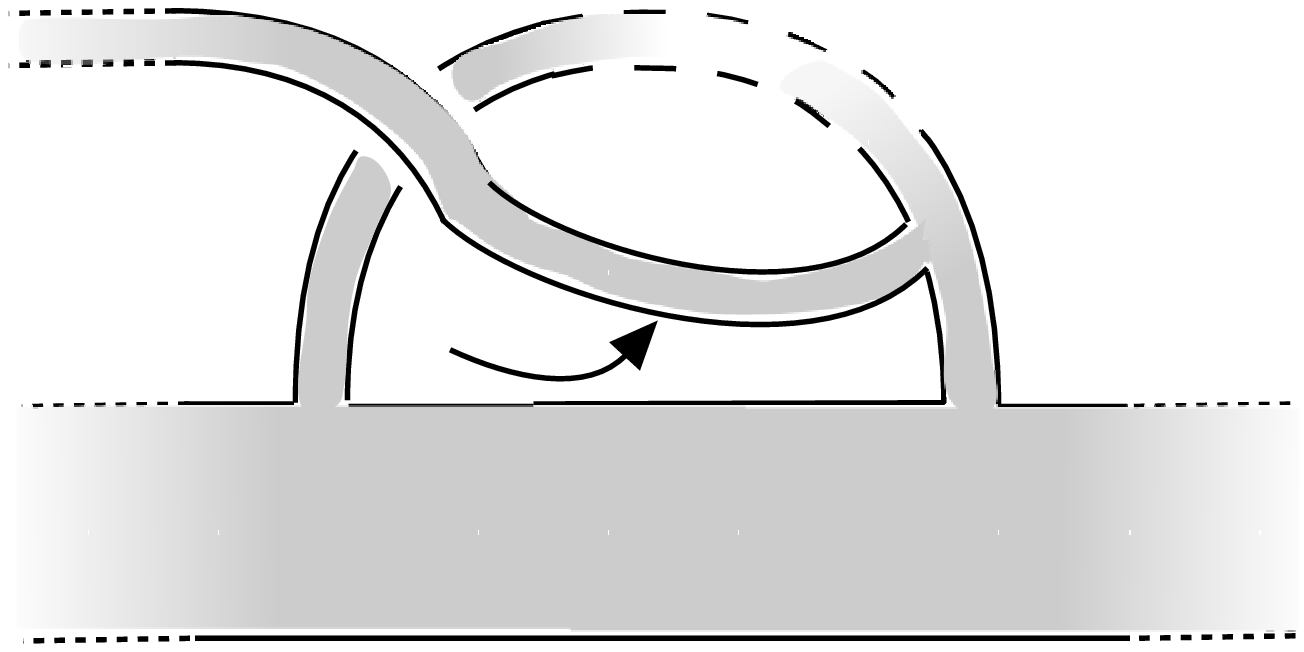}
\end{minipage}\quad\Too\hspace{7pt}
\begin{minipage}{80pt}
\includegraphics[width=80pt]{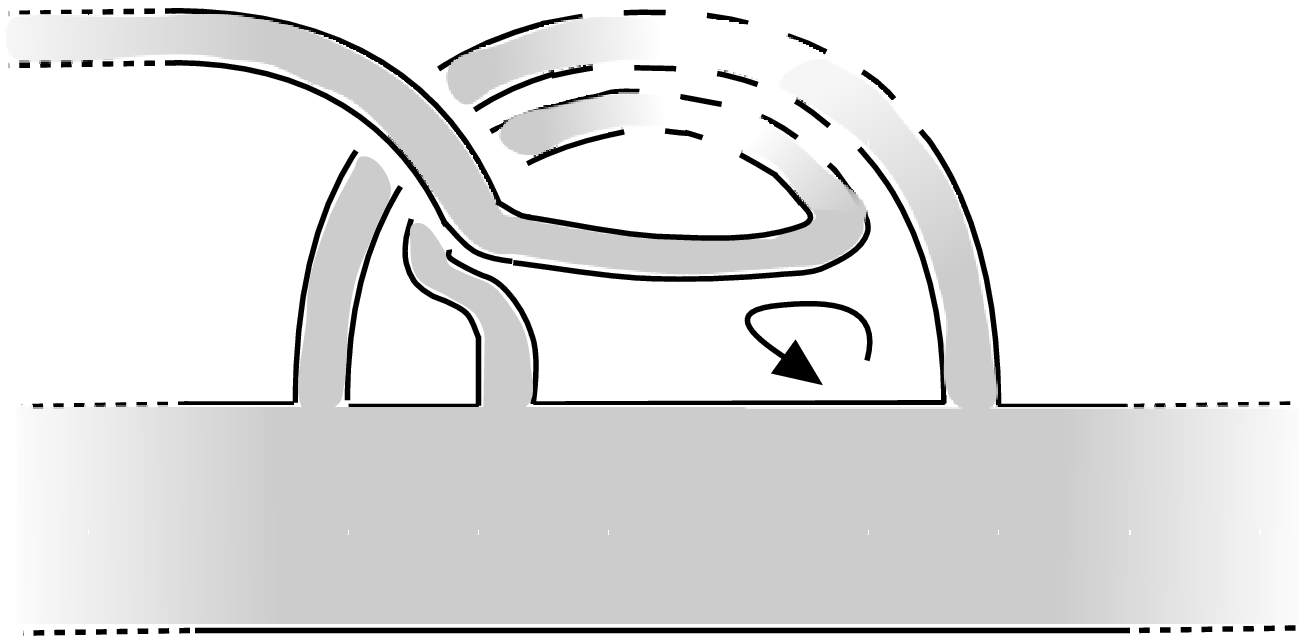}
\end{minipage}
\end{equation*}
\noindent Similarly we can slide $B_{2i-1}$ clockwise over $B_{2i}$,
and we can slide $B_{2i}$ over $B_{2i-1}$ both clockwise and
counterclockwise.\par

These moves fix $F$ but change the choice of basis for $H_1(F)$, and
so will change the surface data. The effect on the choice of basis
is:
\begin{itemize}
\item Sliding $B_{2i-1}$ counterclockwise (respectively clockwise) over $B_{2i}$:
$$\left(x_{1},\ldots,x_{2i-1},x_{2i},\ldots,x_{2g}\right)\mapsto
\left(x_{1},\ldots,x_{2i-1},x_{2i}\pm
x_{2i-1},\ldots,x_{2g}\right)$$

\item Sliding $B_{2i}$ counterclockwise (respectively clockwise) over $B_{2i-1}$:
$$\left(x_{1},\ldots,x_{2i-1},x_{2i},\ldots,x_{2g}\right)\mapsto
\left(x_{1},\ldots,x_{2i-1}\pm x_{2i},x_{2i},\ldots,x_{2g}\right)$$
\end{itemize}

\noindent And the corresponding effect on the colouring vector is as
follows:

\begin{itemize}
\item Sliding $B_{2i-1}$ counterclockwise (respectively clockwise) over
$B_{2i}$:
$$\left(v_{1},\ldots,v_{2i-1},v_{2i},\ldots,v_{2g}\right)\mapsto
\left(v_{1},\ldots,v_{2i-1},v_{2i}\cdot
v_{2i-1}^{\mp1},\ldots,v_{2g}\right)$$
\item Sliding $B_{2i}$ counterclockwise (respectively clockwise) over $B_{2i-1}$:
$$\left(v_{1},\ldots,v_{2i-1},v_{2i},\ldots,v_{2g}\right)\mapsto
\left(v_{1},\ldots,v_{2i-1}\cdot
v_{2i}^{\mp1},v_{2i},\ldots,v_{2g}\right)$$
\end{itemize}

\noindent The corresponding effects on the Seifert matrix are
$\M\mapsto \left(P^{\pm}_{(2i-1,2i)}\right) \M
\left(P^{\pm}_{(2i-1,2i)}\right)^T$ and $\M\mapsto
\left(P^{\pm}_{(2i,2i-1)}\right) \M
\left(P^{\pm}_{(2i,2i-1)}\right)^T$ for $P^{\pm}_{j,k}\ass I\pm
E_{j,k}$.

\begin{example}
Let $(K,\rho)$ be a $\Dn$-coloured genus one knot for which

\begin{equation}
(\M,\vec{v})=\
\genusoneknot{a_{11}}{a_{12}}{a_{21}}{a_{22}}{v_1}{v_2}
\end{equation}
\noindent with respect to a given basis of $H_1(F)$. The effect of
  band sliding $B_1$ over $B_2$ counterclockwise is as follows:

\begin{equation}
\left(\M,\vec{v}\right)\mapsto
\genusoneknot{a_{11}+a_{12}+a_{21}+a_{22}}{a_{12}+a_{22}}{a_{21}+a_{22}}{a_{22}}{v_1}{v_2\cdot
v_1^{-1}}
\end{equation}
\end{example}

The following two lemmas are crucial in this approach. They show how
much freedom band slides give us to engineer the colouring vector.

\begin{lem}\label{L:slidingtwins}
For twin bands $B_{2i-1}$ and $B_{2i}$ for which either $v_{2i-1}$
or $v_{2i}$ generates $\Cn$,   band slides allow us to transform the
pair $(v_{2i-1},v_{2i})$ to any other pair
$(v_{2i-1}^\prime,v_{2i}^\prime)$  for which either
$v_{2i-1}^\prime$ or $v_{2i}^\prime$ generates $\Cn$.
\end{lem}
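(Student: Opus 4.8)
The plan is to recognise the four band slides of Lemma \ref{L:handleslidelemma}'s analogue for twin bands as the elementary generators of $\mathrm{SL}_2(\pZ)$ acting on the pair $(v_{2i-1},v_{2i})$, and then to invoke the transitivity of this action on unimodular vectors. Writing the two relevant colouring-vector entries additively as $v_{2i-1}=s^{p}$ and $v_{2i}=s^{q}$ with $p,q\in\pZ$, the tabulated effect of the slides on the colouring vector becomes, in terms of the column vector $(p,q)^T$,
\begin{equation*}
(p,q)^T\mapsto\begin{pmatrix}1 & 0\\ \pm1 & 1\end{pmatrix}(p,q)^T
\qquad\text{and}\qquad
(p,q)^T\mapsto\begin{pmatrix}1 & \pm1\\ 0 & 1\end{pmatrix}(p,q)^T,
\end{equation*}
the first pair arising from sliding $B_{2i-1}$ over $B_{2i}$ (clockwise / counterclockwise) and the second from sliding $B_{2i}$ over $B_{2i-1}$. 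Since these slides leave every other entry of the colouring vector untouched, it suffices to analyse the induced action on $(p,q)^T$.

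First I would observe that the four matrices above are exactly the elementary transvections $I\pm E_{12}$ and $I\pm E_{21}$, which generate $\mathrm{SL}_2(\mathds{Z})$; because reduction modulo $n$ gives a surjection $\mathrm{SL}_2(\mathds{Z})\twoheadrightarrow\mathrm{SL}_2(\pZ)$, arbitrary finite sequences of band slides realise precisely the left action of $\mathrm{SL}_2(\pZ)$ on $\pZ^2$. Note that no constraint is imposed on the intermediate pairs, since a band slide is an ambient isotopy that is always available regardless of the labels it carries. Next I would translate the hypothesis: the assumption that $v_{2i-1}$ or $v_{2i}$ generates $\Cn$ says that $p$ or $q$ is a unit of $\pZ$, so that $p$ and $q$ generate the unit ideal of $\pZ$; that is, $(p,q)^T$ is a unimodular column vector. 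The same applies to the target pair $(v_{2i-1}^\prime,v_{2i}^\prime)$.

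The key step is then the standard fact that $\mathrm{SL}_2(\pZ)$ acts transitively on the set of unimodular column vectors of $\pZ^2$, which one checks prime-power factor by factor via the Chinese Remainder Theorem (reducing to a Euclidean / elementary-divisor normal form on each factor). Granting this, both $(p,q)^T$ and the target lie in a single orbit, so there is $g\in\mathrm{SL}_2(\pZ)$ carrying one to the other; writing $g$ as a word in the elementary generators yields the required sequence of band slides. I expect the only genuine content to be this transitivity statement together with the bookkeeping that matches each slide to its matrix and its sign; a fully constructive alternative that avoids citing transitivity is to run a Euclidean algorithm—use the unit coordinate to clear the other entry, normalise to $(1,0)^T$, and then reverse the corresponding reduction of the target—but this is merely an explicit realisation of the same orbit argument. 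The \emph{main obstacle} is simply ensuring that ``generates $\Cn$'' is correctly identified with unimodularity rather than the weaker condition $\gcd(p,q,n)=1$; since the former implies the latter, our special pairs always sit inside the single orbit of unimodular vectors, which is all the lemma requires.
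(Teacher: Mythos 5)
Your proof is correct, but it takes a genuinely different, more algebraic route than the paper. The paper's argument is a bare-hands two-step elimination: assuming without loss of generality that $v_{2i-1}=s^{p}$ with $p$ a unit, repeated slides of $B_{2i-1}$ over $B_{2i}$ add arbitrary multiples of $p$ to the exponent of $v_{2i}$ and hence realise any value there; this is used first to reduce to the case where all four of $v_{2i-1},v_{2i},v_{2i-1}^\prime,v_{2i}^\prime$ generate $\Cn$, then to set $v_{2i}=v_{2i}^\prime$, and finally (sliding $B_{2i}$ over $B_{2i-1}$, now over a generator) to set $v_{2i-1}=v_{2i-1}^\prime$. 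You instead identify the four slides with the transvections generating $\mathrm{SL}_2(\mathds{Z})$, hence (via surjectivity of reduction mod $n$) all of $\mathrm{SL}_2(\pZ)$, and quote transitivity of $\mathrm{SL}_2(\pZ)$ on unimodular vectors. This is sound: your matrices match the tabulated slide rules, your remark that intermediate pairs are unconstrained is exactly the point that needs checking, and ``generates $\Cn$'' does mean ``unit entry'', which implies unimodularity. Your version even proves something slightly stronger (any two unimodular pairs are connected), at the cost of importing three nontrivial facts --- generation of $\mathrm{SL}_2(\mathds{Z})$ by elementary transvections, surjectivity of reduction mod $n$ on $\mathrm{SL}_2$, and transitivity on unimodular vectors via the Chinese Remainder Theorem --- where the paper needs only the observation that adding multiples of a unit reaches every residue. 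One small wording slip at the end: unimodularity of $(p,q)$ over $\pZ$ \emph{is} the condition $\gcd(p,q,n)=1$; what your hypothesis actually supplies is the stronger property that a single entry is a unit, and it is precisely this that makes the constructive Euclidean shortcut you sketch collapse to the paper's two explicit rounds of slides.
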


\begin{proof}
Assume without the limitation of generality that $v_{2i-1}$
generates $\Cn$. Then by sliding $B_{2i-1}$ over $B_{2i}$ an
appropriate number of times, we can transform $v_{2i}$ into a
generator of $\Cn$ (in fact into any element of $\Cn$). We can
therefore assume that both $v_{2i-1}$ and $v_{2i}$ generate $\Cn$.
Symmetrically, we can assume that both $v_{2i-1}^\prime$ and
$v_{2i}^\prime$ generate $\Cn$. Slide $B_{2i-1}$ over $B_{2i}$ until
the corresponding entry in the colouring vector becomes
$v_{2i}^\prime$ and then sliding $B_{2i}$ over $B_{2i-1}$ until the
corresponding entry in the colouring vector becomes
$v_{2i-1}^\prime$.
\end{proof}

\begin{lem}\label{L:killevenentries}
For any pair of twin bands $B_{2i-1}$ and $B_{2i}$, by band slides
we can obtain a band projection which induces a colouring vector
such that either $v_{2i-1}$ vanishes, or $v_{2i}$ vanishes, as
desired.
\end{lem}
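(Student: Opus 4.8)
The plan is to reduce the statement to an elementary computation in the cyclic group $\Cn\cong\pZ$ and then run a Euclidean-algorithm argument realized by band slides. Writing $v_{2i-1}=s^{a}$ and $v_{2i}=s^{b}$ for integers $a,b$, the four available slides (sliding either twin over the other, clockwise or counterclockwise) act on the pair $(a,b)$ by exactly the operations $(a,b)\mapsto(a,b\mp a)$ and $(a,b)\mapsto(a\mp b,b)$, according to the effect of band slides on the colouring vector recorded above. Since both signs are available, we may add or subtract one entry to or from the other at will, all taken modulo $n$.

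First I would choose integer representatives with $0\le a,b<n$ and run the subtractive Euclidean algorithm: repeatedly subtract the smaller entry from the larger, each such step being realized by a single band slide of one twin over the other. Because every step keeps both entries in the range $[0,n)$ and strictly decreases their sum, the reduction modulo $n$ is invisible throughout, and after finitely many slides we reach a pair in which one entry equals $\gcd(a,b)$ and the other equals $0$. The colouring-vector entry corresponding to the latter is then $s^{0}=1$, i.e.\ it vanishes. To make whichever entry is desired vanish, I invoke the symmetry between the two families of slides: running the algorithm so that the zero lands in the $B_{2i}$ slot kills $v_{2i}$, while the mirror-image sequence --- interchanging the roles of $B_{2i-1}$ and $B_{2i}$ --- instead drives $v_{2i-1}$ to the identity.

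Note that, unlike Lemma \ref{L:slidingtwins}, no hypothesis that one of the entries generates $\Cn$ is needed: $\gcd(a,b)$ may well be a non-unit in $\pZ$, but we only ask that a single entry become trivial, which the algorithm always achieves. The one point I would be careful to spell out is the faithful correspondence between band slides and these integer operations, together with the observation that the subtractive algorithm never leaves $[0,n)$; granting these, termination and the vanishing of the chosen entry are immediate, so I expect no serious obstacle beyond this bookkeeping.
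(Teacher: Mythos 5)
Your proposal is correct and follows essentially the same route as the paper: both run a subtractive Euclidean-type descent on the exponent pair $(a,b)$ realized by sliding one twin band over the other, and both use the swap $(a,0)\to(a,a)\to(0,a)$ (your ``mirror-image'' freedom, made explicit in the paper as $(1,v)\to(v,v)\to(v,1)$) to control which of the two entries ends up trivial. Your version is just a more concrete rendering, with integer representatives in $[0,n)$, of the paper's lexicographic descent with respect to a total ordering on $\Cn$.
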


\begin{proof}
Equip $\Cn$ with a total ordering, and for each $i=1,\ldots,g$ slide
$B_{2i}$ over $B_{2i-1}$ if $v_{2i-1}\geq v_{2i}$, or $B_{2i-1}$
over $B_{2i}$ otherwise. We obtain a pair
$(v_{2i-1}^\prime,v_{2i}^\prime)$ which is smaller than
$(v_{2i-1},v_{2i})$ in the lexical ordering induced by the ordering
on $\Cn$. Repeat until we kill $v_{2i}$ (in which case we're
finished) or $v_{2i-1}$.\par

Now that we have obtained a colouring vector with $v_{2i-1}=1$, if
we want a colouring vector with $v_{2i}=1$, exchange the $2i$'th and
the $(2i-1)$'st entries in the colouring vector by a sequence of
band slides corresponding to the following operations on entries of
the colouring vector:
$$(1,v_{2i})\to (v_{2i},v_{2i})\to (v_{2i},1)$$
\noindent Analogously, if $v_{2i}=1$ and we want $v_{2i-1}$ to
vanish, we can reverse the sequence of band slides above.
\end{proof}

\subsection{Reduction of Genus}
The goal of this section is to show that any $\Dn$-coloured knot
$(K,\rho)$ of genus $g$ is $\rho$--equivalent to a $\Dn$-coloured
knot of genus $1$. The proof consists of three steps. We first show
that for any $(K,\rho)$ we may choose a band projection such that
the induced colouring vector has its first entry equal to $s$. The
second step is to arrange every other entry to be $1$. Having
prepared such a band projection the final step is to reduce genus by
$\rho$--equivalences.

\subsubsection{Step 1: Engineer a band projection such that $v_1=s$.}\label{SSS:Step1}

If $n$ is prime, engineering a band projection such that $v_1=s$ is
straightforward (Lemma \ref{L:slidingtwins}), and one may proceed
directly to Step 2. If $n$ is composite, however, a more involved
argument may be required.\par

Our strategy is to construct the desired band projection directly,
by finding an appropriate \textit{cut system}. For the purpose of
this section's discussion we'll formalize a few terms.

\begin{defn}
A \emph{cut} on some Seifert surface for some knot $K$ is a simple
non-separating oriented curve lying on the surface whose two
boundary points lie on $K$.
\end{defn}

\begin{defn}
Consider some cut $C$ on some Seifert surface $F$. The \emph{ring
around $C$} is a particular simple closed oriented curve in $S^3-F$,
constructed in the following way. Seifert surfaces are bi-collared,
so we may thicken $F$ in $S^3$ to $F\times [-1,1]$. The original
surface $F$ is regarded as occupying the $0$--slice of this
cylinder. Let the boundary points of $C$ be $C_0$ and $C_1$ (so that
$C$ runs from $C_0$ to $C_1$). The ring around $C$ is now the loop
which starts at $C_0\times \{1\}$, follows the curve $C\times 1$ to
$C_1\times \{1\}$, loops around $K$ to $C_1 \times \{-1\}$ via the
path $\gamma_1$ shown in Figure \ref{F:finaldiag}, returns along
$C\times \{-1\}$ to $C_0 \times \{-1\}$, then loops back around $K$
to its starting point using the obvious path $\gamma_0$.
\end{defn}

\begin{figure}
\psfrag{k}[c]{$K$}\psfrag{f}[c]{$F$}
\begin{minipage}{120pt}
\psfrag{c}[c]{$C$}
\includegraphics[width=120pt]{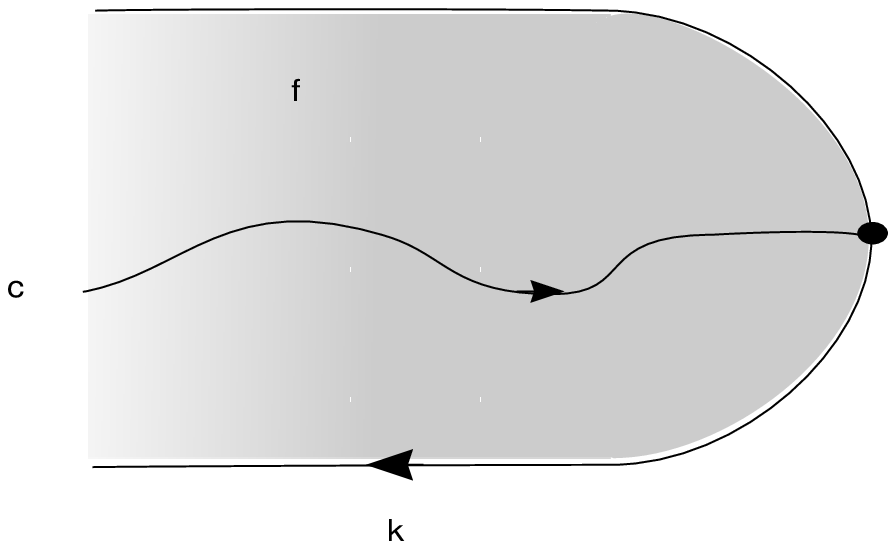}
\end{minipage}
\quad\
\begin{minipage}{23pt}
\includegraphics[width=23pt]{fluffyarrow}
\end{minipage}\qquad\qquad
\begin{minipage}{150pt}
\psfrag{g}[c]{$\gamma_1$}\psfrag{0}[r]{\SMALL$C\times\{-1\}$}\psfrag{1}[r]{\SMALL$C\times\{1\}$}\psfrag{2}[l]{\SMALL$C_1\times\{1\}$}\psfrag{3}[l]{\SMALL$C_1\times\{-1\}$}
\includegraphics[width=150pt]{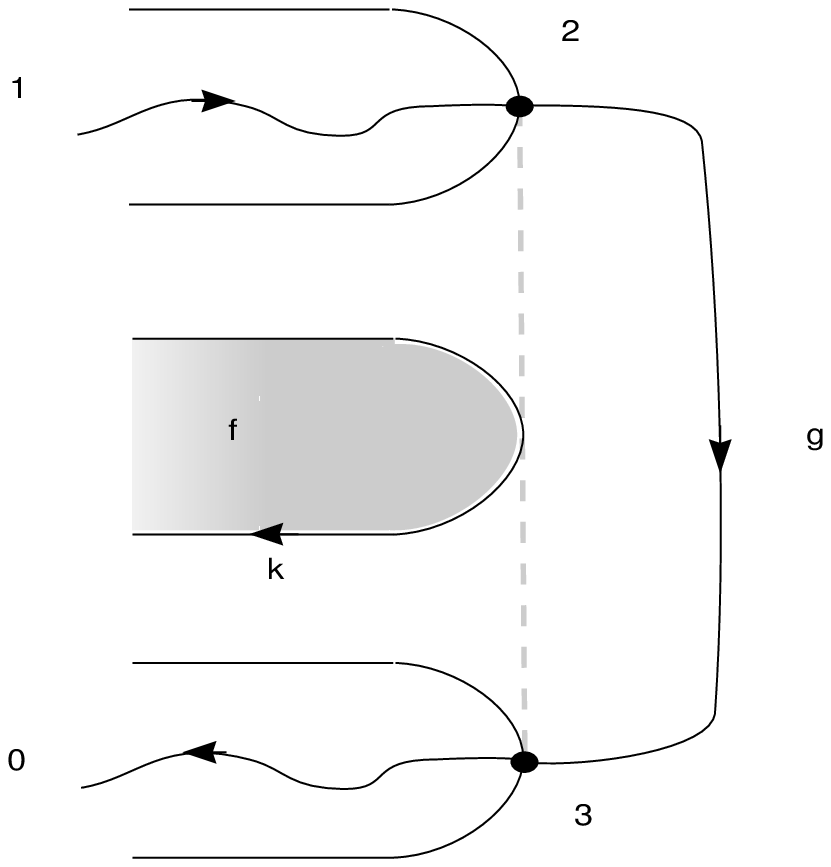}
\end{minipage}
\caption{\label{F:finaldiag}The path $\gamma_1$.}
\end{figure}

So given a cut on a Seifert surface, we may take the ring around it,
which now evaluates in the representation $\rho$ to give a
well-defined element of $C_n$.\par

The constructions which resolve this step can now be described by
the following two lemmas.

\begin{lem}\label{L:AndrewCut1}
Consider a $D_{2n}$-coloured knot $(K,\rho)$, and a Seifert surface
$F$ for $K$. If there exists a cut $C$ on the surface whose
corresponding ring evaluates to $s$, then the knot has a band
projection whose corresponding colouring vector has its first entry,
$v_1$, equal to $s$.
\end{lem}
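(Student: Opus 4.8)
The plan is to realise the required band projection as the dual of a \emph{cut system} extending the given cut $C$, and then to identify the ring around $C$ with the first dual homology class $\xi_1$. Passing between a band projection and a system of cuts is routine: in the projection of Figure \ref{F:bandproj} the $2g$ arcs that cross the bands transversally once form a cut system (they cut $F$ into the single disc $D^2$), and conversely any cut system $\{C_1,\ldots,C_{2g}\}$ exhibits $F$ as a disc with $2g$ bands, the band $B_i$ being the handle dual to $C_i$ and its core representing a class $x_i\in H_1(F)$ which crosses $C_i$ once and is disjoint from the other cuts. So the first step is to recall the standard surface-topology fact that a simple non-separating properly embedded arc with endpoints on $\partial F=K$ extends to a full cut system; setting $C_1\ass C$, I obtain a cut system with $C$ as its first member, and dualising it (isotoping the resulting disc-with-bands into the standard position of Figure \ref{F:bandproj}) produces a band projection of $K$ in which the first band $B_1$ is dual to $C$.

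The second step is the homological identification $\mathrm{ring}(C_i)=\xi_i$ in $H_1(S^3-F)$. Recall that $\xi_i$ is the unique class characterised by $\mathrm{Link}(x_j,\xi_i)=\delta_{ij}$. I would compute $\mathrm{Link}\bigl(x_j,\mathrm{ring}(C_i)\bigr)$ directly from the definition of the ring: since $\mathrm{ring}(C_i)$ runs parallel to $C_i$ on the two sides $C_i\times\{\pm1\}$ of the thickened surface and is closed up around $K$ by the auxiliary paths $\gamma_0,\gamma_1$, it is isotopic in $S^3-F$ to a small meridian loop encircling the cut arc $C_i$. Consequently the linking number of any curve with $\mathrm{ring}(C_i)$ equals that curve's algebraic intersection number with $C_i$ on $F$. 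As the core $x_j$ of band $B_j$ meets $C_i$ in exactly $\delta_{ij}$ signed points, this gives $\mathrm{Link}\bigl(x_j,\mathrm{ring}(C_i)\bigr)=\delta_{ij}$, which is precisely the characterising property of $\xi_i$. Hence $\mathrm{ring}(C_i)=\xi_i$, and in particular $\mathrm{ring}(C)=\mathrm{ring}(C_1)=\xi_1$.

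To conclude, note (as in the proof of Lemma \ref{L:VMV}) that the restriction of $\rho$ to $\pi_1(S^3-F)$ lands in the abelian group $\Cn$ and so factors through $H_1(S^3-F)$; equal homology classes therefore have equal image. By hypothesis the ring around $C$ evaluates to $s$, and by definition of the colouring vector $v_1=\rho(\xi_1)$, so combining these with $\mathrm{ring}(C)=\xi_1$ yields
\[
v_1=\rho(\xi_1)=\rho\bigl(\mathrm{ring}(C)\bigr)=s,
\]
which is the assertion of the lemma.

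I expect the main obstacle to be the second step: pinning down, from the intricate definition of the ring (with its excursions $\gamma_0,\gamma_1$ around $K$), that its linking with a curve on $F$ is computed by intersection number with the cut. The excursions around $K$ are exactly what allow the loop to close up inside $S^3-F$ while still encircling $C_i$, but verifying that they contribute nothing extra to the linking with each $x_j$, and that the signs match the orientation conventions of Figure \ref{F:bandproj}, is the delicate part. By contrast, extending a single non-separating arc to a cut system is standard and should present no difficulty.
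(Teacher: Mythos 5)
Your proposal is correct and follows essentially the same route as the paper: extend the given cut $C$ to a full cut system in the standard ``product of commutators'' order and dualise it to a band projection with $B_1$ dual to $C$, which is exactly the paper's argument. The only divergence is in identifying the evaluation of the ring around $C$ with $v_1$: the paper simply observes that the ambient isotopy carrying the cut-open disc to standard position carries the rings around the cuts to the standard curves $\xi_i$ of Figure \ref{F:bandproj}, whereas you establish $\mathrm{Link}\bigl(x_j,\text{ring}(C_i)\bigr)=\delta_{ij}$ directly and then invoke the fact (implicit in the proof of Lemma \ref{L:VMV}) that $\rho$ restricted to $\pi_1(S^3-F)$ has abelian image in $\Cn$ and hence factors through $H_1(S^3-F)$ --- a valid and arguably more self-contained substitute for the paper's geometric tracking of the ring.
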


\begin{lem}\label{L:AndrewCut2}
Every Seifert surface $F$ of a $D_{2n}$-coloured knot $(K,\rho)$ has
a cut on it whose corresponding ring evaluates to $s$.
\end{lem}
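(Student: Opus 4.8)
The plan is to translate the statement into homology and reduce it to a realization question for relative $1$--classes on $F$, exploiting the surjectivity of $\rho$.

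First I would set up the rotation--valued homomorphism and check its surjectivity. As in the proof of Lemma~\ref{L:VMV}, the restriction of $\rho$ to $\pi_1(S^3-F)$ takes values in the rotation subgroup $\Cn$ and so descends to a homomorphism $\bar{\rho}\co H_1(S^3-F)\to\Cn$. Since $\pi_1(S^3-K)$ is generated by $\pi_1(S^3-F)$ together with a meridian $m$ (the HNN description used in Lemma~\ref{L:VMV}) and $\rho(m)$ is a reflection $ts^a$, the image $\Dn=\rho(\pi_1(S^3-K))$ equals $\langle H,ts^a\rangle$ where $H\ass\mathrm{im}(\bar{\rho})\le\Cn$. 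A reflection normalizes $\Cn$ and inverts its elements, so $\langle H,ts^a\rangle$ has order $2\abs{H}$; comparing with $\abs{\Dn}=2n$ forces $H=\Cn$. Hence $\bar{\rho}$ is onto, and writing $v_i=s^{w_i}$ gives $\gcd(w_1,\ldots,w_{2g},n)=1$.

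Next I would identify the map ``cut $\mapsto$ value of its ring''. The group $H_1(S^3-F)$ is free on the dual basis $\xi_1,\ldots,\xi_{2g}$, so the ring around a cut $C$, being a loop in $S^3-F$, has class $\sum_i\mathrm{Link}(\text{ring},x_i)\,\xi_i$. The geometric heart of the argument is the identity $\mathrm{Link}(\text{ring around }C,\;x_i)=C\cdot x_i$, the algebraic intersection number of the arc $C$ with the curve $x_i$ inside $F$: the pushoffs $C\times\{\pm1\}$ hug $C$ on opposite sides of $F$ while the connecting arcs $\gamma_0,\gamma_1$ run along $K=\partial F$ away from the interior curves $x_i$, so each transverse intersection of $x_i$ with $C$ contributes $\pm1$ and nothing else does. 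Consequently the assignment factors as
\[
H_1(F,\partial F)\ \xrightarrow{\ \cong\ }\ H_1(S^3-F)\ \xrightarrow{\ \bar{\rho}\ }\ \Cn,
\]
where the first isomorphism (Lefschetz duality in the coordinates $x_i,\xi_i$) carries $[C]$ to $\sum_i(C\cdot x_i)\xi_i$. Calling this composite $\Phi$, the previous paragraph shows $\Phi$ is onto, and in the coordinates $[C]\mapsto(C\cdot x_1,\ldots,C\cdot x_{2g})$ it is $\Phi(\kappa)=s^{\sum_i\kappa_iw_i}$.

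Finally I would realize the class by an honest cut. It suffices to produce a \emph{primitive} class $\kappa\in H_1(F,\partial F)\cong\mathds{Z}^{2g}$ with $\Phi(\kappa)=s$: a primitive relative class is represented by a simple embedded arc, which is necessarily non-separating (a separating arc is null-homologous in $H_1(F,\partial F)$) and which, once oriented, is a cut whose ring evaluates to $s$. The problem thus reduces to the elementary fact that, since $\gcd(w_1,\ldots,w_{2g},n)=1$ and $2g\ge2$, the congruence $\sum_i\kappa_iw_i\equiv1\pmod n$ admits a primitive integral solution: from any solution $\beta$ one checks that its content is coprime to $n$, and clears the remaining prime factors of $\gcd(\beta_2,\ldots,\beta_{2g})$ by replacing $\beta_1$ with $\beta_1+n\delta_1$ for a $\delta_1$ supplied by the Chinese Remainder Theorem (primes dividing both that gcd and $n$ cause no trouble, since reducing $\sum\beta_iw_i\equiv1$ modulo such a prime shows it cannot divide $\beta_1$). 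I expect the main obstacle to be the middle step, namely pinning down with consistent orientation conventions that the ring around $C$ represents exactly $\sum_i(C\cdot x_i)\xi_i$, so that the cut--to--ring assignment is genuinely Lefschetz duality followed by $\bar{\rho}$; once this dictionary is in place the remaining realization problem is routine, and for $n$ prime it collapses entirely, matching the remark that composite $n$ is where the real work lies.
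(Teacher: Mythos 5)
Your proof is correct, but it takes a genuinely different route from the paper's. The paper first normalizes the band projection by band slides so that half the entries of the colouring vector vanish (Lemma \ref{L:killevenentries}), then writes down an explicit $g$--parameter family of cuts (Figure \ref{F:Andrewcut}), computes that the ring around the cut for $(a_1,\ldots,a_g)$ evaluates to $\prod_i(v_{2i})^{a_i}$, and finally decomposes an arbitrary curve $\psi$ with $\rho(\psi)=s$ as $\gamma^{k_1}\psi_1\cdots\gamma^{k_j}\psi_j$ to conclude that $s$ lies in the subgroup generated by the surviving entries. You instead work intrinsically in $H_1(F,\partial F)$: your order-counting argument $\abs{\langle H,ts^a\rangle}=2\abs{H}$ for the surjectivity of $\bar{\rho}$ is cleaner than the paper's decomposition of $\psi$, and your identification of the cut-to-ring assignment as Lefschetz duality followed by $\bar{\rho}$ (via $\mathrm{Link}(\mathrm{ring},x_i)=C\cdot x_i$, which is the same computation the paper carries out with the push-offs $\tau^{\pm}x_i$) reduces the lemma to realizing a primitive class $\kappa$ with $\sum_i\kappa_iw_i\equiv 1\pmod n$, a condition solvable because $\gcd(w_1,\ldots,w_{2g},n)=1$ and $g\geq 1$ (forced by surjectivity of $\rho$ onto a nonabelian group). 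What each approach buys: the paper's is constructive --- one can literally draw the cut, in keeping with the algorithmic aims of the paper --- at the cost of the band-slide preprocessing and an explicit figure; yours avoids the band slides entirely and isolates the conceptual content, but leans on the standard (and here unproved) fact that a primitive class in $H_1(F,\partial F)$ is represented by an embedded non-separating arc. That appeal is of the same depth as the ``standard manipulation'' the paper itself invokes in the proof of Lemma \ref{L:AndrewCut1}, and your number-theoretic adjustment to primitivity is sound, so the gap is one of exposition rather than substance.
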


We'll explain the proofs of these lemmas in turn.\par

\begin{proof}[Proof of Lemma \ref{L:AndrewCut1}]
This proof is essentially a re-reading of the standard manipulations
that show that every Seifert surface has a band projection (see
\textit{e.g.} \cite[Chapter 6]{ST34}).\par

A system of cuts on $F$, $C_1$ through $C_{2g}$, is called a
\emph{cut system} if when we remove the bands coming from the
regular neighbourhoods of the cuts, we are left with a disc. If we
have a cut system on $F$, then the disc that remains after we remove
the bands from it has its boundary marked with $2g$ pairs of
intervals, corresponding to the two sides that are created when an
arc is cut open. Label these intervals using $B_1$ through $B_{2g}$,
say, depending on which cut an interval came from. (So, in
particular, each label will appear twice.) If we have chosen our
cuts so that these labels appear in the usual ``product of
commutators'' order, then an ambient isotopy which takes this disc
into a standard unknotted disc position will carry the original
Seifert surface into standard band-projection position. Furthermore,
that ambient isotopy will also carry the rings around the cuts to
the rings around the ``standard'' cuts of a knot in band projection
(see Figure \ref{F:bandcuts}), which are the usual $\xi_i$'s.\par

\begin{figure}
\psfrag{1}[c]{$B_{1}$}\psfrag{2}[c]{$B_{2}$}\psfrag{3}[c]{$B_3$}
\psfrag{4}[c]{$B_{4}$}\psfrag{5}[c]{$B_{2g-1}$}\psfrag{6}[c]{$B_{2g}$}\psfrag{D}{\large$D^2$}
\psfrag{d}[c]{\Huge$\mathbf{\ldots}$}
\includegraphics[width=4.5in]{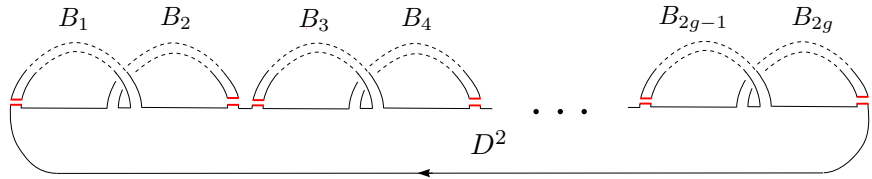}
\caption{\label{F:bandcuts} The ``standard'' cuts of a band
projection.}
\end{figure}

So our only task is to show that any given cut $C_1$ may be
completed to a cut system, $C_1$ through $C_{2g}$, marking the disc
in the desired ``product of commutators'' order. This is a standard
manipulation.
\end{proof}

\begin{proof}[Proof of Lemma \ref{L:AndrewCut2}] Begin with any band
projection of the given $D_{2n}$-coloured knot. Using Lemma
\ref{L:killevenentries} kill even numbered entries in the colouring
vector by band slides. \par

Next we'll introduce the collection of cuts amongst which we'll find
our desired cut. To every vector $(a_1,\ldots,a_g)\in \mathbb{Z}^g$
associate a cut in the way illustrated by Figure \ref{F:Andrewcut}.

\begin{figure}[h]
$$
\begin{minipage}{3in}
\includegraphics[width=3in]{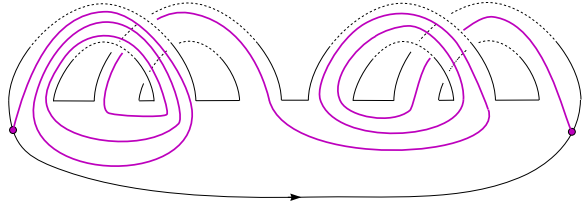}
\end{minipage}
$$
\caption{\label{F:Andrewcut}The cut for $g=2$ and
$(a_1,a_2)=(3,-2)$}
\end{figure}

We claim that we can choose the vector $(a_1,\ldots,a_g)$ so that
the ring around the corresponding cut evaluates under $\rho$ to
$s$.\par

Our next task, then, is to determine how the image under $\rho$ of
the ring around one of these cuts depends on the given vector. Well,
observe that this ring is homologous in $H_1(S^3-F)$ to
$$
\sum_{i=1}^g\left(a_i(\tau^+x_{2i-1}-\tau^-x_{2i-1})+(\tau^-x_{2i}-\tau^+x_{2i})\right),
$$
where, recall, $\tau^{\pm}x$ denotes the push-off from the Seifert
surface of a curve $x$ in the positive (resp. negative) direction.
Furthermore, note for all $i$ that $\tau^+x_{2i-1}-\tau^-x_{2i-1}$
is homologous to $\xi_{2i}$ and $\tau^-x_{2i}-\tau^+x_{2i}$ is
homologous to $\xi_{2i-1}$. Thus the ring around the cut
corresponding to the vector $(a_1,\ldots,a_g)$ evaluates under
$\rho$ to
$$
\left(v_{2}\right)^{a_1}\left(v_{4}\right)^{a_2}\ldots\left(v_{2g}\right)^{a_g}.
$$

To finish the proof we ask: can we choose the vector
$(a_1,\ldots,a_g)$ so that this expression evaluates to $s$? The
answer is yes, because we assumed that $\rho$ was surjective. (Here
are some quick details: Because $\rho$ is surjective, there will be
some curve $\psi$ in the complement of $K$ mapping to $s$. Note that
it will have to link $K$ an even number of times. So we can write
$\psi$ as some product
$$
\gamma^{k_1}\psi_1\gamma^{k_2}\psi_2\ldots\gamma^{k_j}\psi_j,
$$
where $\gamma$ is some fixed loop based at the base-point $\star$
which intersects the Seifert surface exactly once, in the positive
direction, where each $\psi_i$ is a loop based at $\star$ in the
complement of the Seifert surface, and where $\sum_i k_i = 0$.\par

Each of these factors $\psi_i$ is mapped under $\rho$ to the element
of $\Cn$ given by the formula:
$$
\rho(\psi_i) =
\left(v_{2}\right)^{\mathrm{Link}(\psi_i,x_2)}\left(v_{4}\right)^{\mathrm{Link}(\psi_i,x_4)}\ldots\left(v_{2g}\right)^{\mathrm{Link}(\psi_i,x_{2g})}.
$$
So $\rho$ of the above product of curves, which equals $s$ by the
choice of $\psi$, gives the desired expression for $s$.)
\end{proof}

\subsubsection{Step 2: Kill $v_i$ for $i>1$}\label{SSS:Step2}

First, kill $v_{2i}$ for $i=1,\ldots,g$ by Lemma
\ref{L:killevenentries} (note that this leaves $v_1$ untouched
because $s$ generates $\Cn$). If $v_3=s^a$ and if $a>0$, first
exchange $v_1$ and $v_2$ by band slides using Lemma
\ref{L:slidingtwins}, then slide bands as follows:

\begin{gather*}
\scriptsize \psfrag{a}[r]{$1$}\psfrag{b}{$s$}
\begin{minipage}{160pt}
\psfrag{c}{$s^a$}\psfrag{d}{$1$}
\includegraphics[width=160pt]{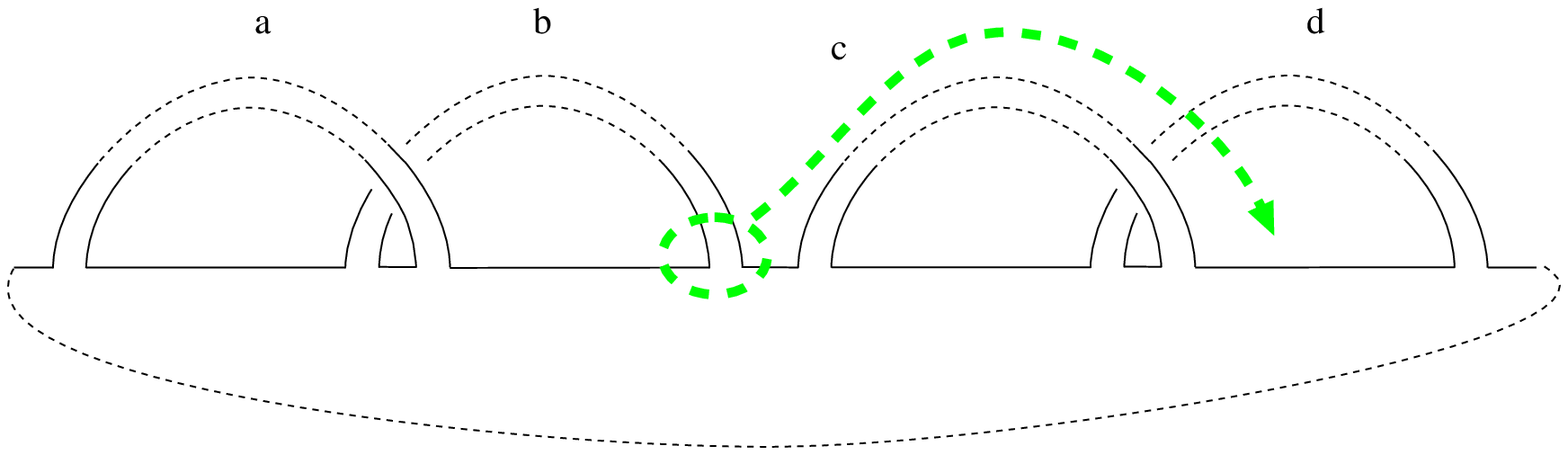}
\end{minipage}\quad \begin{minipage}{23pt}
\includegraphics[width=23pt]{fluffyarrow}
\end{minipage}\quad
\begin{minipage}{160pt}
\includegraphics[width=160pt]{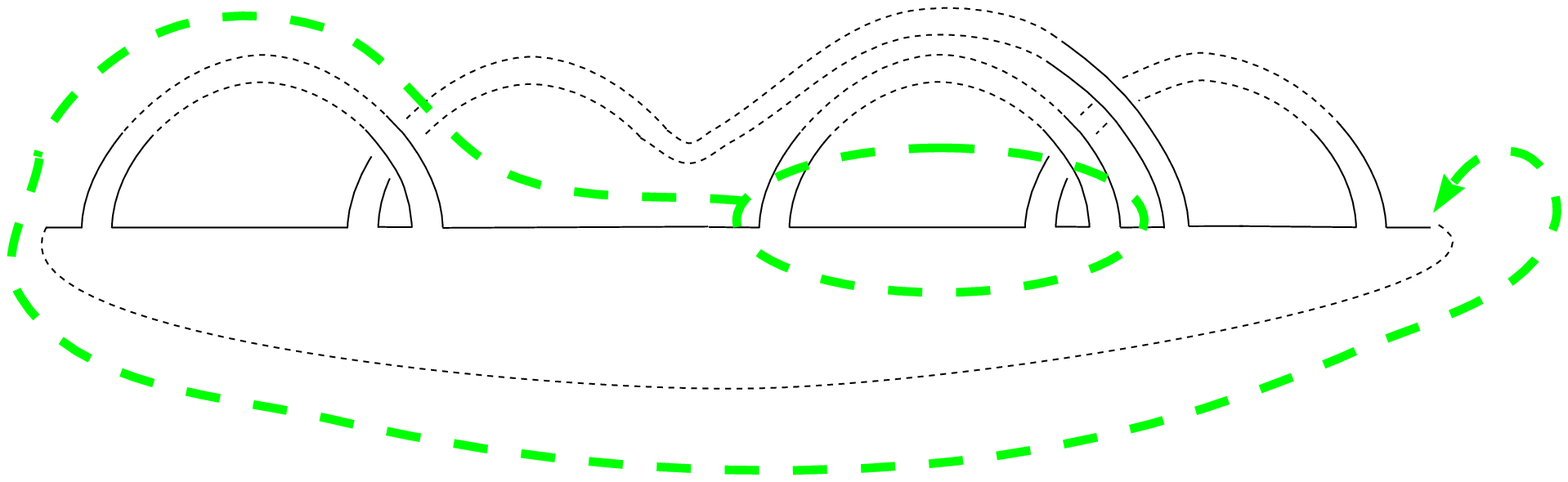}
\end{minipage}\\
\begin{minipage}{23pt}
\includegraphics[width=23pt]{fluffyarrow}
\end{minipage}\qquad
\begin{minipage}{160pt}\scriptsize
\psfrag{a}[r]{$1$}\psfrag{b}{$s$}\psfrag{d}{$s^{a+1}$}\psfrag{c}{$1$}
\includegraphics[width=160pt]{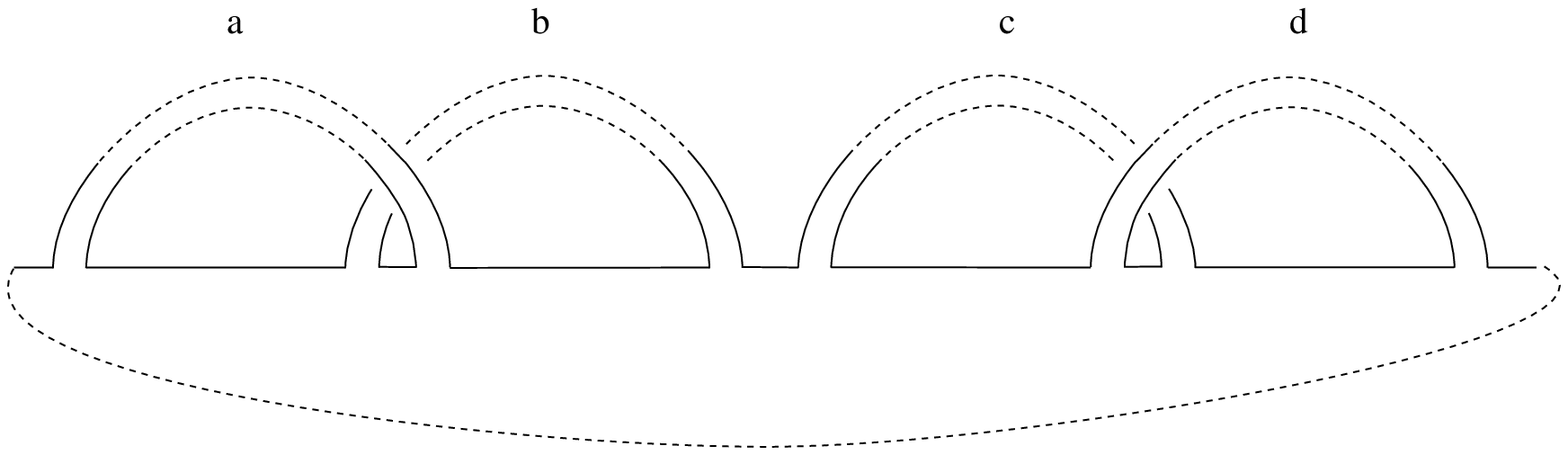}
\end{minipage}\rule{100pt}{0pt}
\end{gather*}

The second arrow is obtained by sliding both attaching segments of
$B_{3}$ and the left attaching segment of $B_{4}$ all the way around
the knot counterclockwise. This does not effect the colouring vector
because $v_{4}=1$.\par

Repeat the above steps $n-a$ times. After this step (if we switch
back $v_1$ and $v_2$), $v_3$ which has been killed while the rest of
the colouring vector has been unchanged. Now slide $B_3$ and $B_4$
over $B_5$ and $B_6$:

\begin{multline*}
\begin{minipage}{175pt}
\includegraphics[width=175pt]{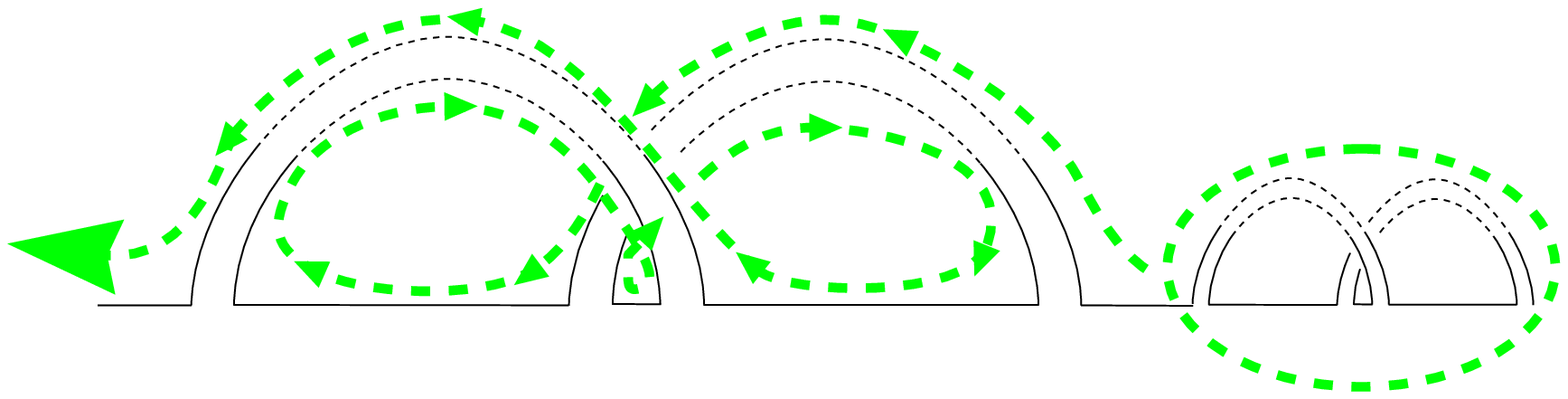}
\end{minipage}\ \ \ \begin{minipage}{23pt}
\includegraphics[width=23pt]{fluffyarrow}
\end{minipage}\quad
\begin{minipage}{135pt}
\includegraphics[width=125pt]{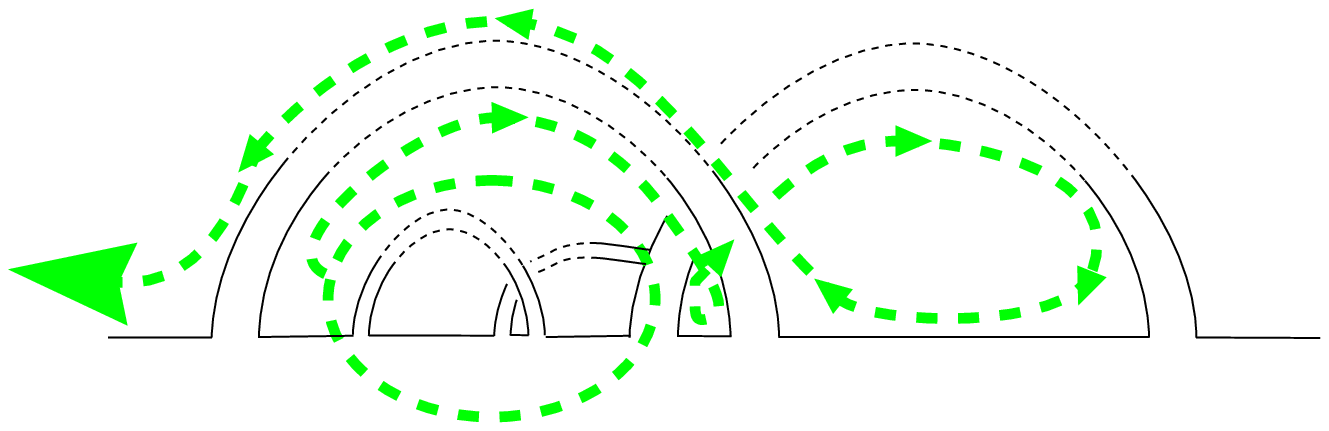}
\end{minipage}\\
\begin{minipage}{23pt}
\includegraphics[width=23pt]{fluffyarrow}
\end{minipage}\qquad
\begin{minipage}{200pt}
\includegraphics[width=200pt]{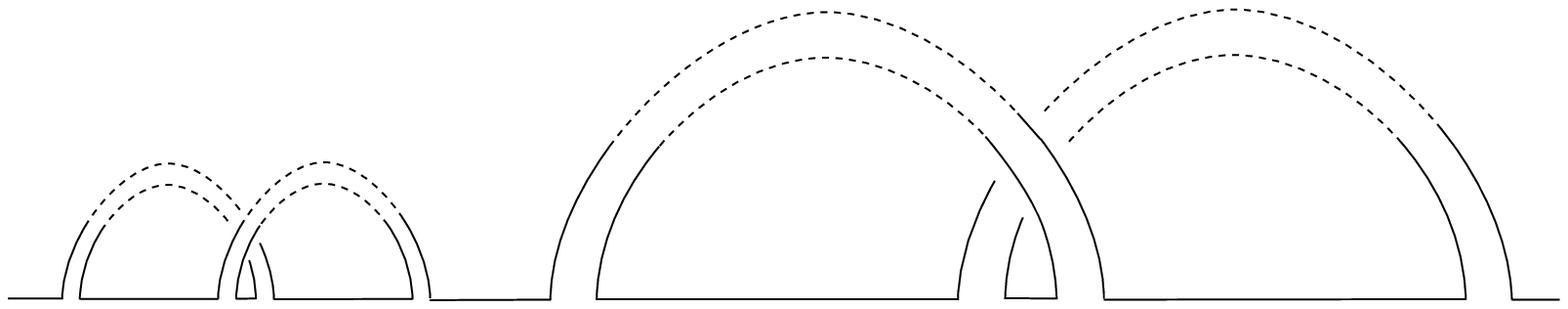}
\end{minipage}
\end{multline*}
\noindent and repeat the sequence of slides which we used to kill
$v_3$ in order to kill $v_5$.\par

Repeat all steps above to kill $v_{2i+1}$ for all
$i=1,2,\ldots,g-1$, and the colouring vector becomes
$\vec{v}=\left(s,1,\ldots,1\right)^T$ as required.

\subsubsection{Step 3: Surgery to reduce genus}

Now that we have a band projection with respect to which
$\vec{v}=\left(s,1,\ldots,1\right)^T$, we can reduce genus by
surgery. Assume that $g>1$. By surgery we trivialize bands $B_i$ for
$i>2$, starting from the right.\par

If $B_i$ links with $B_{2g}$ for some $i<2g$, we may isotopy $B_i$
to make sure it passes first over and then under $B_{2g}$ with
respect to the orientation of $x_{2g}$:

$$
\psfrag{1}[r]{$B_{2g}$}\psfrag{2}{$B_i$}
\begin{minipage}{100pt}
\includegraphics[width=100pt]{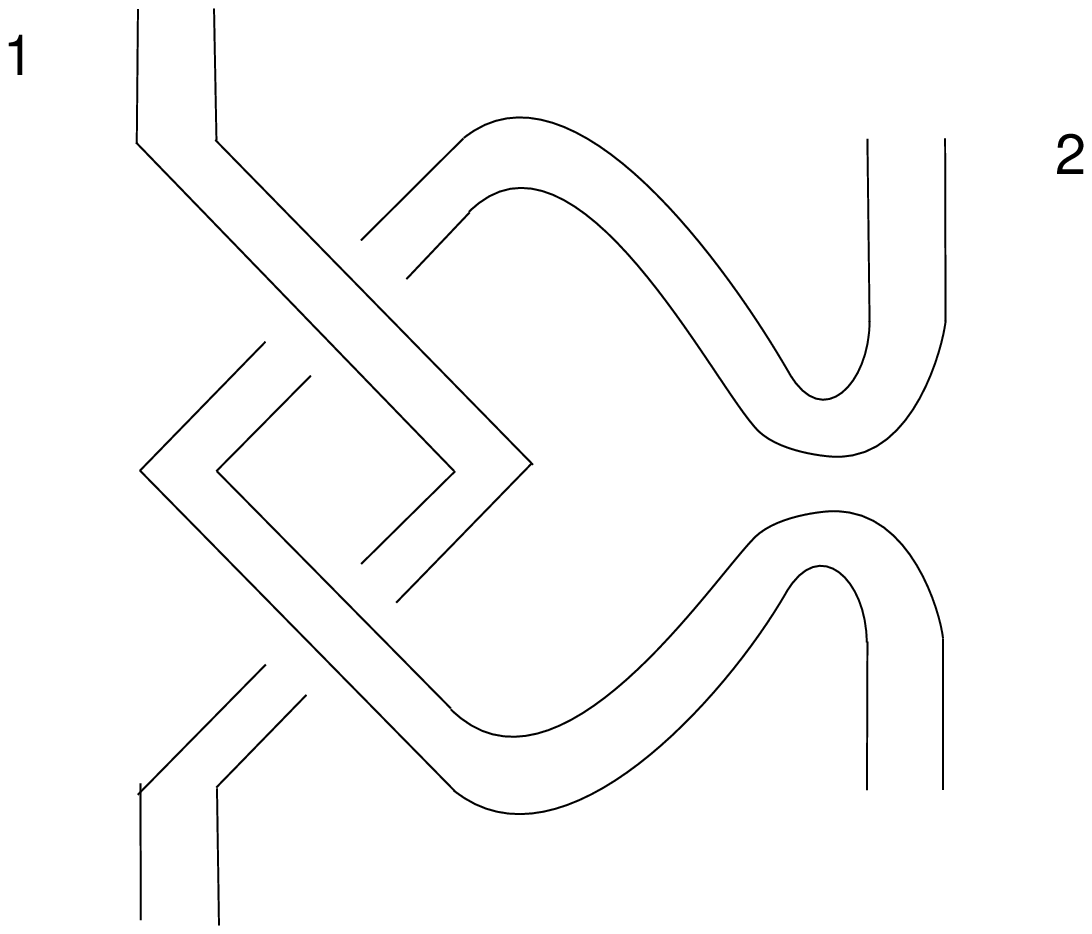}
\end{minipage}\qquad\Longleftrightarrow\qquad
\begin{minipage}{100pt}
\includegraphics[width=100pt]{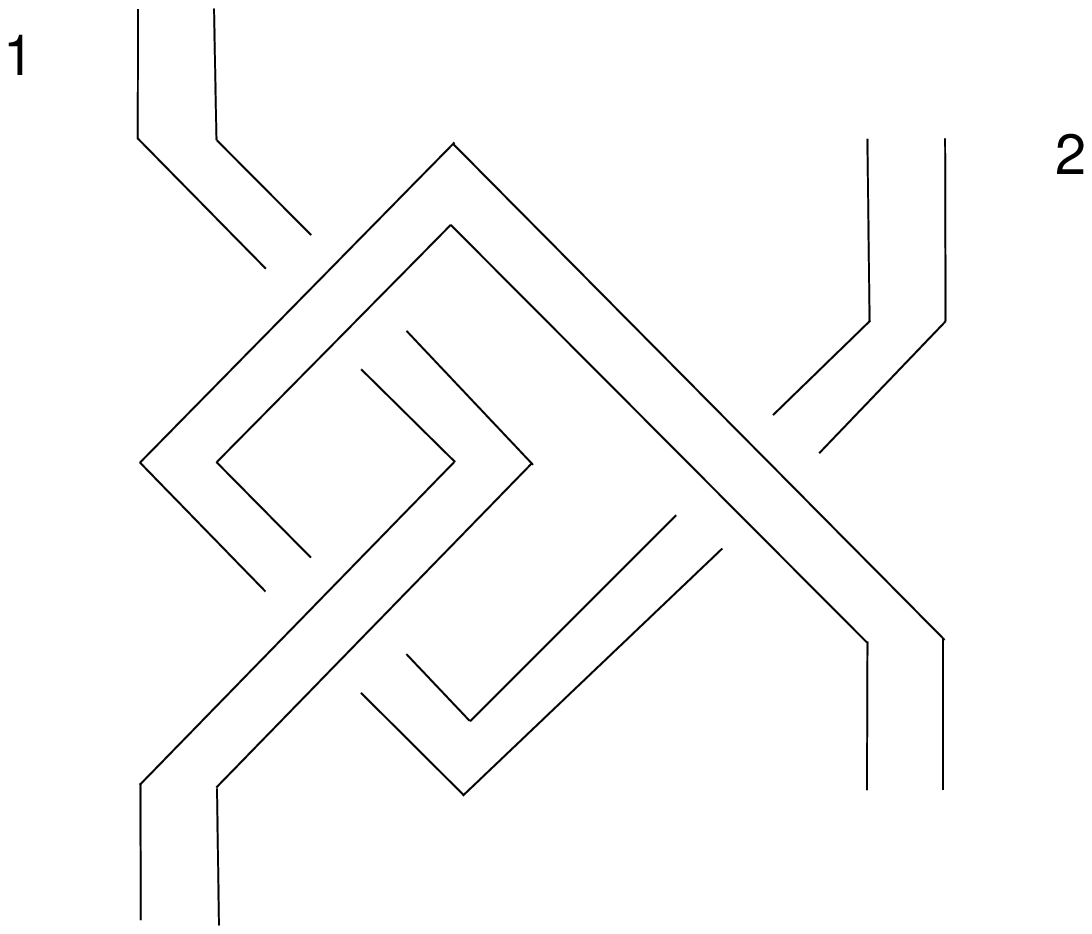}
\end{minipage}
$$\\


Denote by $\bar{C}$ and by $\bar{D}$ collective linkage of other
bands with $B_{2g}$ and $B_{2g-1}$ correspondingly. Consider Figure
\ref{F:genred1}.

\begin{figure}[h]
\psfrag{1}{\Small$\delta_{4g}$}\psfrag{2}{\Small$\delta_{4g-1}$}\psfrag{3}{\Small$\delta_{4g-2}$}\psfrag{4}{\Small$\delta_{4g-3}$}\psfrag{5}{\Small$\delta_{4g-4}$}
\psfrag{c}{$\bar{C}$}\psfrag{d}{$\bar{D}$}
\psfrag{C}{\fs$C_1$}\psfrag{D}[c]{\fs$C_2$}
\includegraphics[width=200pt]{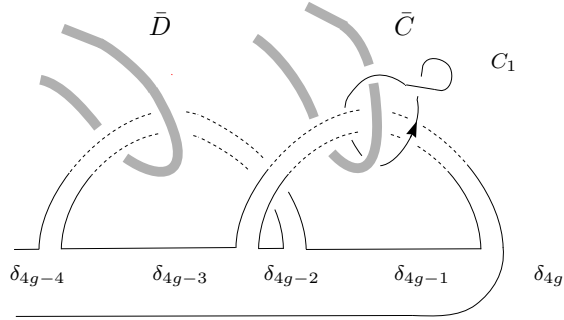}
\caption{\label{F:genred1}Unlinking $\bar{C}$ from $B_{2g}$.}
\end{figure}

Denoting by $\rho(\delta_i)$ the $\rho$--image of the Wirtinger
generator corresponding to $\delta_i$, we have
\begin{align*}
v_1&=\rho(\delta_{4g}\cdot\delta_{4g-1})=\rho(\delta_{4g-3}\cdot\delta_{4g-2})=1\\
v_2&=\rho(\delta_{4g-1}\cdot\delta_{4g-2})=\rho(\delta_{4g-4}\cdot\delta_{4g-3})=1.
\end{align*}

\noindent Therefore $
\rho(\delta_{4g})=\rho(\delta_{4g-1})=\rho(\delta_{4g-2})=
\rho(\delta_{4g-3})=\rho(\delta_{4g-4})$. We also know that
conjugation of $\rho(\delta_{1})$ by the $\rho$--image of all the
arcs in $\bar{C}$ which cross over $B_{2g}$ gives
$\rho(\delta_{4g-3})$, which is equal to $\rho(\delta_{4g})$. In
other words, conjugation of $\rho(\delta_{4g})$, which equals $ts^a$
for some $a\in\pZ$, by the $\rho$--image of a $+1$--framed component
$C_1^\prime$ which loops once around $\bar{C}$ equals
$\rho(\delta_{4g})$. The component $C_1^\prime$ is in $\pi_1(S^3-F)$
(let's allow ourselves to confuse curves with the homotopy classes
which they represent) so $\rho(C_1^\prime)=s^b$ for some $b\in\pZ$
and therefore
$$
ts^a= s^{-b}\cdot ts^a\cdot s^b= ts^{a+2b}
$$
\noindent which is possible only if $b=0$. Thus $C_1^\prime$ is in
$\ker\rho$, and a $+1$--framed component $C_1$ which loops once
around $\bar{C}$ and once around $B_{2g}$ is also in $\ker\rho$ (see
Figure \ref{F:genred1}). By performing surgery along $C_1$ we may
unlink $\bar{C}$ and $B_{2g}$. Thus after this step $B_{2g}$ won't
be linked with any other bands.\par

Next, using the fact that $v_{2g}=1$, untwist $B_{2g}$ by surgery on
$\pm 1$--framed components which ring $B_{2g}$:

\begin{equation}\label{E:RR}
\psfrag{a}{\ \Small$ts^a$}\psfrag{b}[lB]{\Small$ts^a$}
\begin{minipage}{60pt}
\includegraphics[height=60pt]{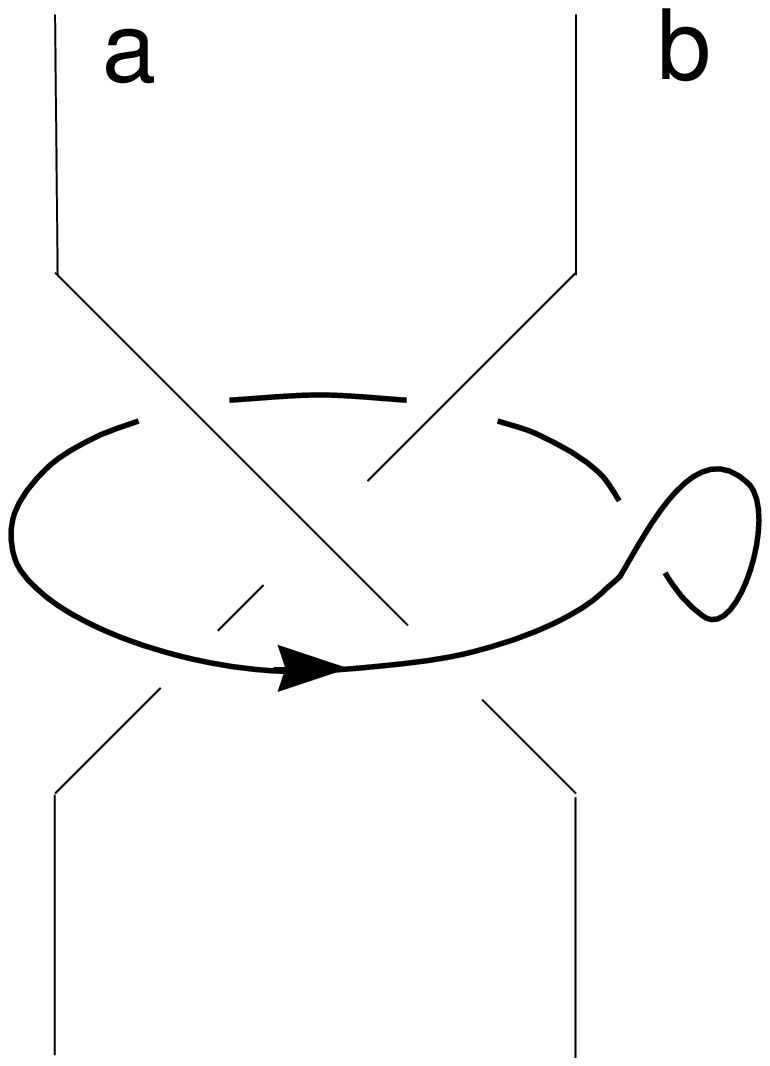}
\end{minipage}\quad \Longleftrightarrow \qquad
\begin{minipage}{60pt}
\includegraphics[height=60pt]{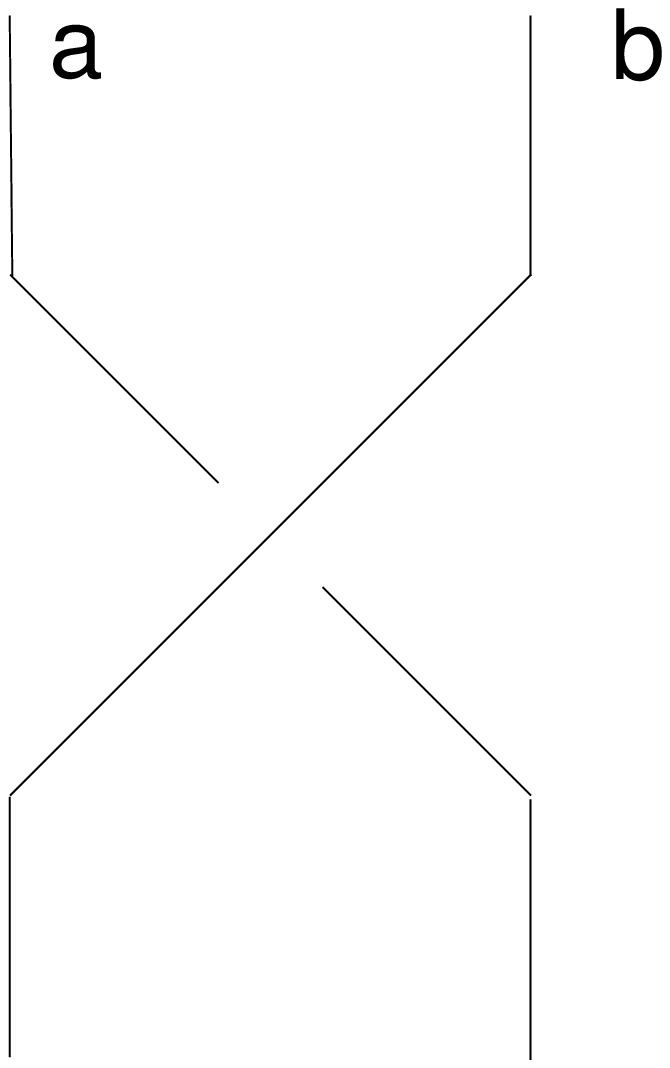}
\end{minipage}
\end{equation}

Finally, if $B_{2g}$ is knotted then it can be untied by surgery in
$\ker\rho$.

\begin{equation}\label{E:untiebands}
\psfrag{C}[lc]{$+1$}
\begin{minipage}{80pt}
\includegraphics[height=80pt]{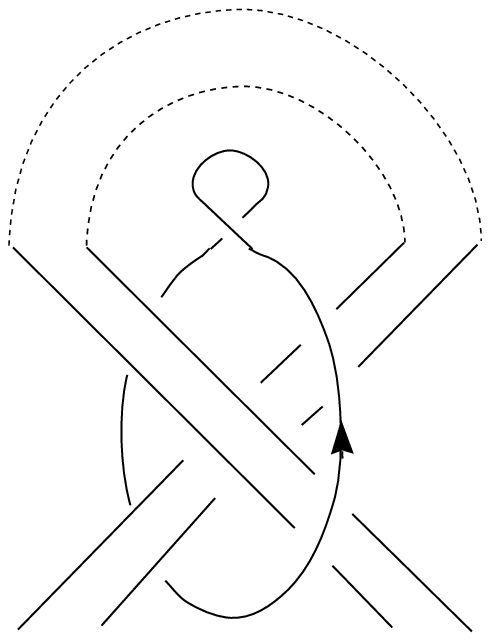}
\end{minipage}\qquad\Longleftrightarrow \qquad
\begin{minipage}{80pt}
\includegraphics[height=80pt]{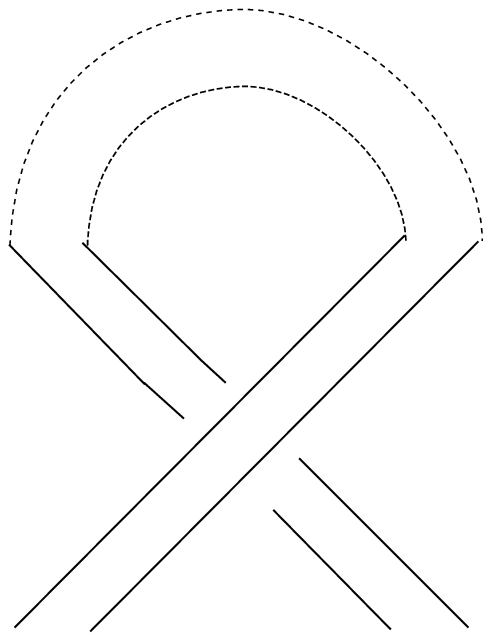}
\end{minipage}
\end{equation}

The resulting diagram represents a $\Dn$-coloured knot of lower
genus that the one we began with, as $B_{2g}$ and $B_{2g-1}$ unravel
and vanish (see Figure \ref{F:genred2}).

\begin{figure}[h]
\psfrag{1}{\Small$\delta_{4g}$}\psfrag{2}{\Small$\delta_{4g-1}$}\psfrag{3}{\Small$\delta_{4g-2}$}\psfrag{4}{\Small$\delta_{4g-3}$}
\psfrag{5}{\Small$\delta_{4g-4}$}
\psfrag{c}{$\bar{C}$}\psfrag{d}{$\bar{D}$}
\psfrag{C}{\fs$C_1$}\psfrag{D}[c]{\fs$C_2$}
\includegraphics[width=180pt]{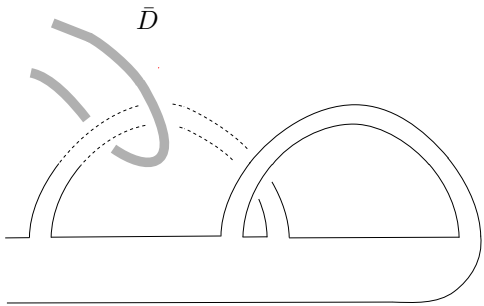}
\caption{\label{F:genred2}After untwisting $B_{2g}$, untying it, and
unlinking everything from it.}
\end{figure}

\subsection{Genus one knots}

In the last section we saw that any $\Dn$-coloured knot $(K,\rho)$
is $\rho$--equivalent to a $\Dn$-coloured genus one knot
$(K^\prime,\rho^\prime)$. Let $(K^\prime,\rho^\prime)$ be given in
band projection, with respect to which it has surface data:

\begin{equation} (\M,\vec{v})\ass\
\genusoneknot{a_{11}}{a_{12}}{a_{12}+1}{a_{22}}{v_1}{v_2}
\end{equation}

We may change any crossing between a band and itself by
$\rho$--equivalence (Equation \ref{E:untiebands}). We may thus take
$B_1$ and $B_2$ to be unknotted. Since two-component homotopy links
are uniquely characterized by their linking number (\textit{i.e.}
$\pi_1(N(x_1))\simeq\mathds{Z}$), we may view $(\M,\vec{v})$ as
giving rise to a unique $\Dn$-coloured knot. From now on we shall do
so, and $B_1$ and $B_2$ will always be assumed to be unknotted.\par

By Lemma \ref{L:slidingtwins} we may take $(v_1,v_2)=(s,1)$, fixing
the colouring vector (surjectivity of $\rho$ implies non-vanishing
of the colouring vector). Now that $v_2=1$ we may add or subtract
twists from $B_2$ by $\rho$--equivalence as in \ref{E:RR}. Therefore
we may set $a_{22}$ to any integer we please. Let's set it to
$\frac{1-n}{2}$.\par

Since $\vec{w}^{\ T} \cdot \M\cdot \vec{w}= 0\bmod n$ (Lemma
\ref{L:VMV}, recalling that $\vec{w}=(w_1,\ldots,w_{2g})$ with
$v_i=s^{w_i}$), we also know that $a_{11}=0\bmod n$. We may add or
subtract $n^2$ full twists in $B_1$ by surgery on a unit--framed
component which rings $n$ times around $B_1$, thus setting
$a_{11}=kn$ for some $0\leq k\leq n-1$. This is illustrated below in
the case $n=3$ (with the number of full twists indicated near the
bands), where the second surgery is there to keep $B_1$ unknotted.

\begin{equation}\label{E:add9twist}
\begin{minipage}{100pt}
\psfrag{a}{$-3$}\psfrag{b}{$-1$}
\includegraphics[width=100pt]{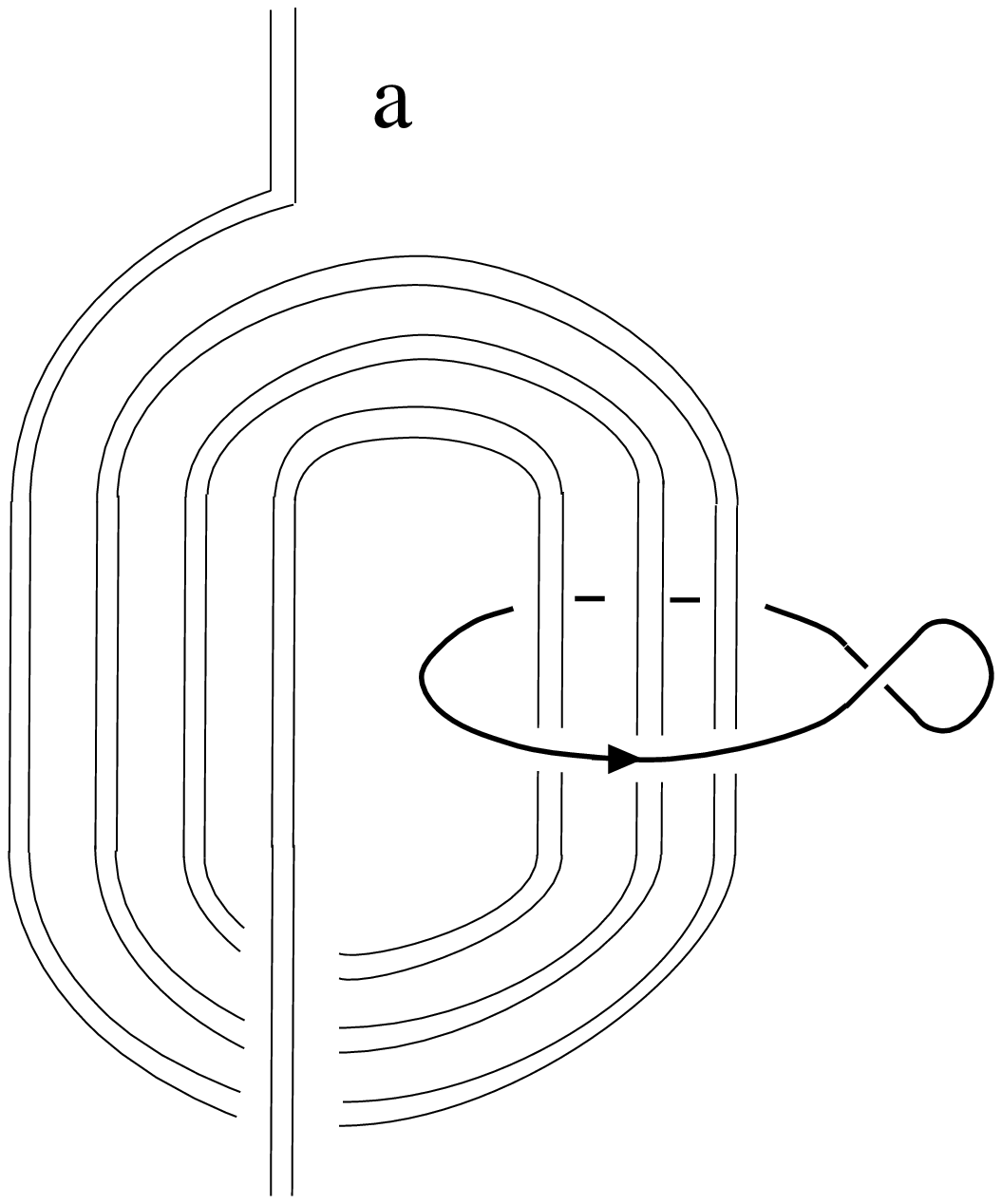}
\end{minipage}\quad\ \seq\hspace{15pt}
\begin{minipage}{100pt}
\psfrag{a}{$0$}\psfrag{b}{$-1$}
\includegraphics[width=85pt]{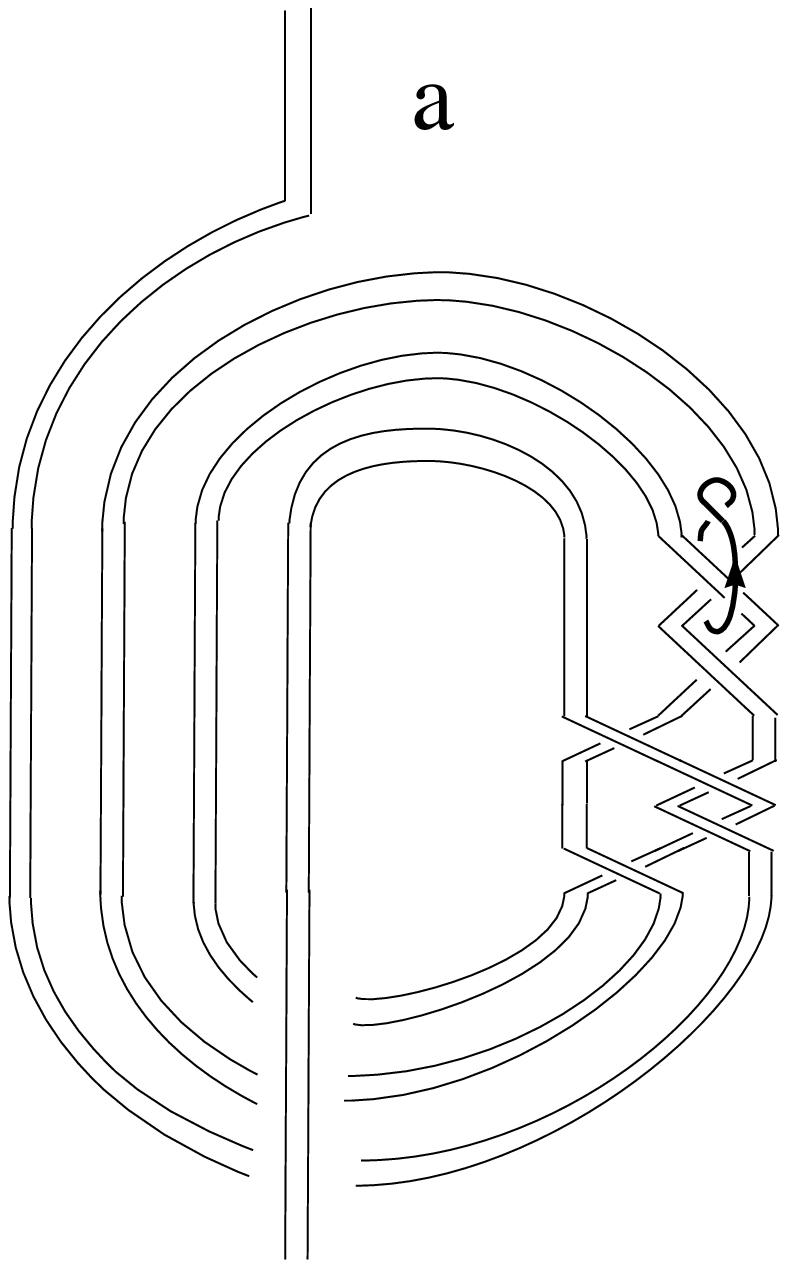}
\end{minipage}\quad\seq\hspace{15pt}\quad
\begin{minipage}{100pt}
\psfrag{a}{$9$}
\includegraphics[width=18.5pt]{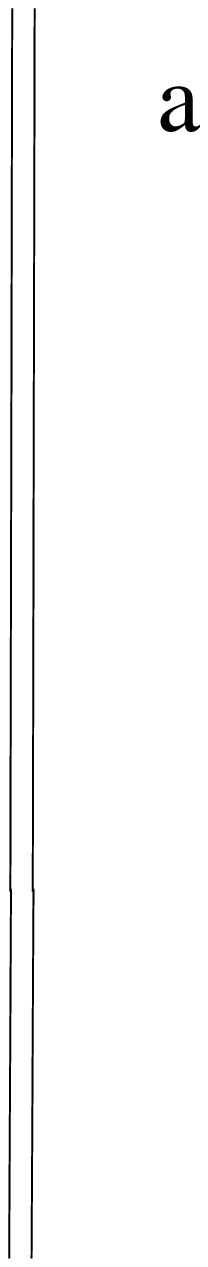}
\end{minipage}
\end{equation}

Next, we may add or subtract $n$ from $a_{12}$ by $\pm1$--framed
surgery on a component $C$ which has no self-intersections in the
projection to the plane which gives the band projection of
$K^\prime$, and for which $\mathrm{Link}(C,B_1)=n$ and
$\mathrm{Link}(C,B_2)=1$ ($C$ is in $\ker\rho$). The surgery is
illustrated below in the case $n=3$:

\begin{equation}
\begin{minipage}{100pt}
\psfrag{a}{$-3$}\psfrag{c}{$0$} \psfrag{b}{$-1$}
\includegraphics[width=100pt]{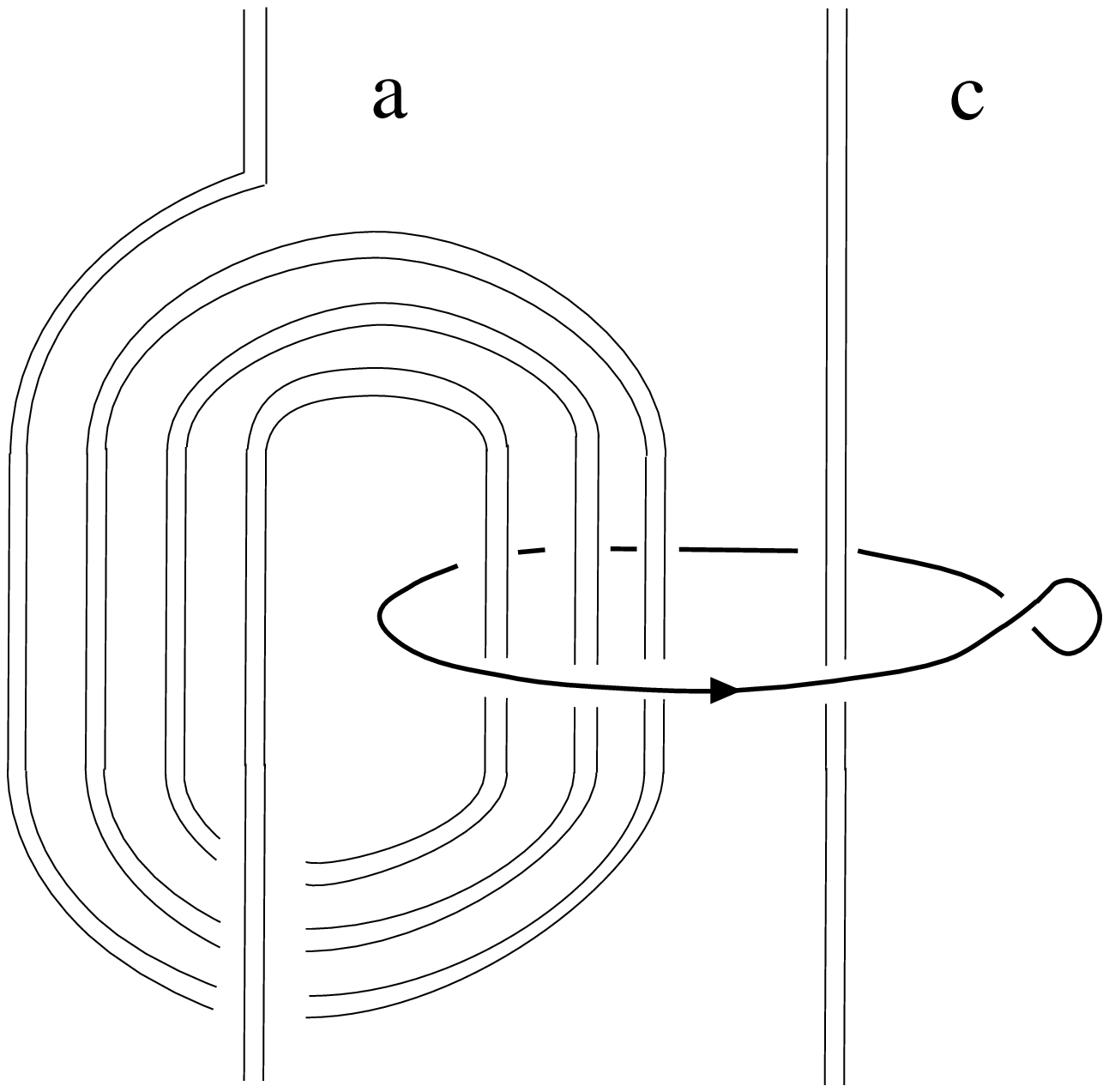}
\end{minipage}\qquad\seq\hspace{12pt}
\begin{minipage}{100pt}
\psfrag{a}{$0$}\psfrag{b}{$1$}\psfrag{c}{$-1$}
\includegraphics[width=90pt]{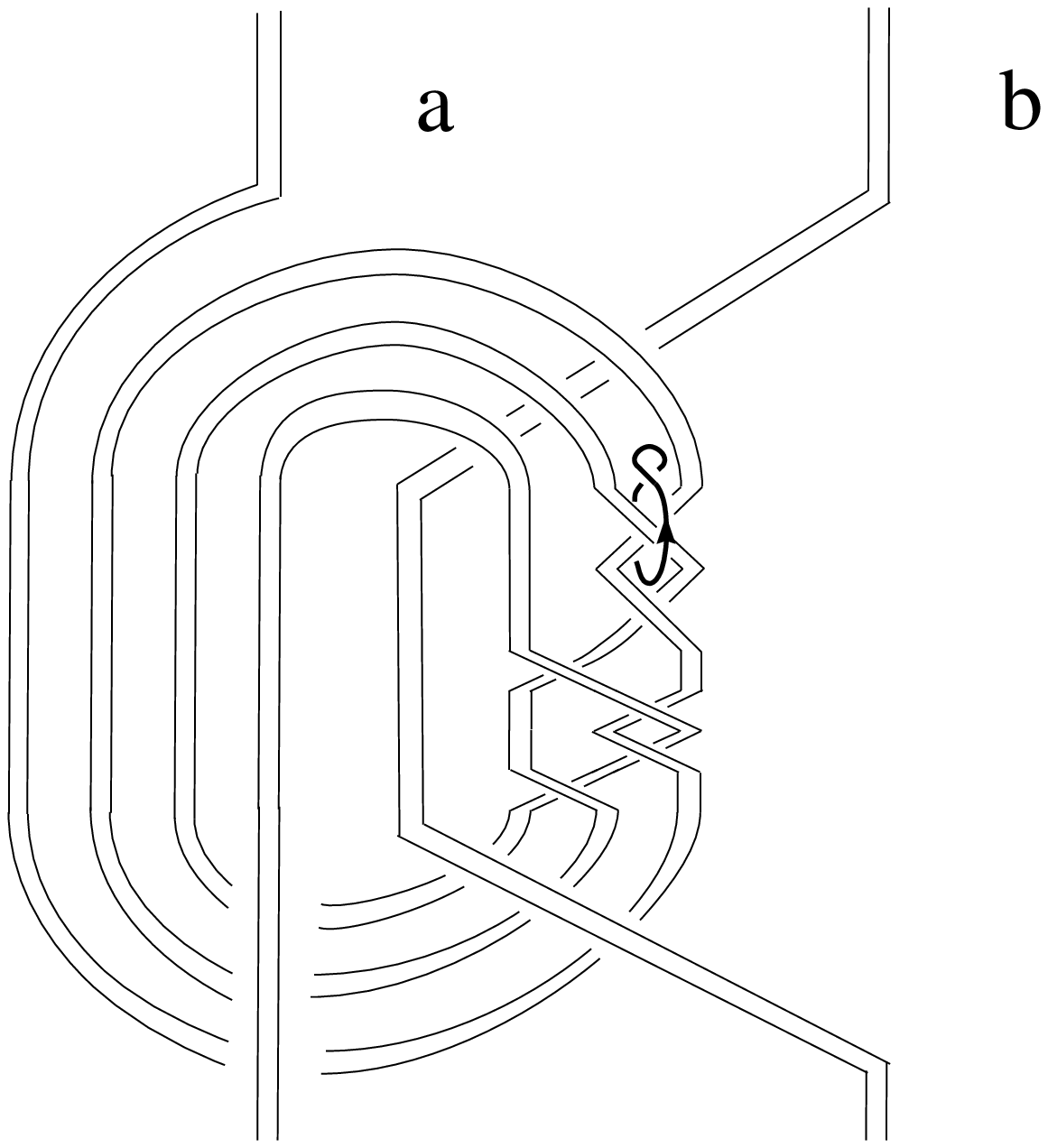}
\end{minipage}\seq\hspace{13pt}
\begin{minipage}{100pt}
\psfrag{a}{$9$}\psfrag{b}{$1$}
\includegraphics[width=68pt]{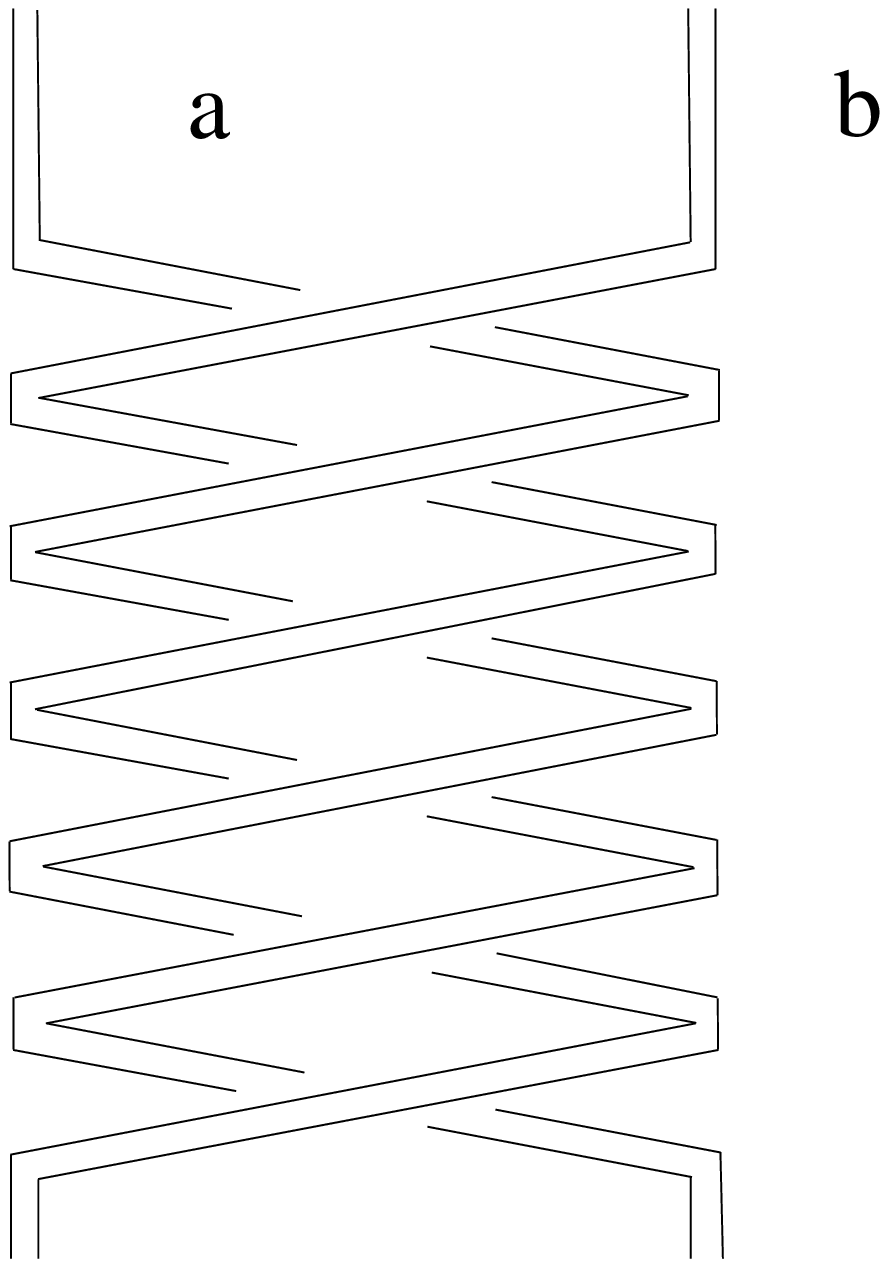}
\end{minipage}
\end{equation}

Cancel the twists added in $B_1$ and $B_2$ by surgery on a
unit--framed component which rings $n$ times around $B_1$ as in
\ref{E:add9twist}, and surgery on a unit--framed component which
rings once times around $B_2$ as in \ref{E:RR}.\par

Thus we may choose $0\leq a_{12}<n$. Because for $\vec{x}=(0,1)^T$
we have $\vec{x}^{\ T} \cdot \M\cdot \vec{w}= 0\bmod n$ (Lemma
\ref{L:VMV}) and because also $a_{12}+1=a_{21}$, this implies that
we may choose $a_{12}=\frac{n-1}{2}$.\par

We are almost there--- we have shown that any $\Dn$-coloured knot is
equivalent to a $\Dn$-coloured knot for which there exists a
coloured Seifert matrix of the form:
\begin{equation}\label{E:finalform}
(\M,\vec{v})=
\genusoneknot{kn}{\frac{n-1}{2}}{\frac{n+1}{2}}{\frac{1-n}{2}}{s}{1}
\end{equation}

\noindent with $k=0,\ldots,n-1$.\par

To simplify one step further, to get the easiest knot to lift that
we can, perform one additional band slide:

$$
(\M,\vec{v})=
\genusoneknot{kn}{\frac{n-1}{2}}{\frac{n+1}{2}}{\frac{1-n}{2}}{s}{1}\mapsto
\genusoneknot{(kn+\frac{n+1}{2})}{0}{1}{\frac{1-n}{2}}{s}{s^{-1}}
$$

The surface data $(\M,\vec{v})$ uniquely determines a genus one
$\Dn$-coloured knot if we assume unknotted bands (as we do). We
denote this knot $(\K_k,\rho_k)$ (see Figure \ref{F:baseknot}, where
the thick line
``\raisebox{1.5pt}{\includegraphics[width=30pt]{crayonline}}''
denotes an arbitrary number of strands). It is the pretzel knot
$p\left(\rule{0pt}{9pt}2kn+1,-1,-n\right)$ with a certain
$\Dn$-colouring. In this section we have shown that any knot is
$\rho$--equivalent to such a knot for some $k=0,\ldots,n-1$, proving
Theorem \ref{T:PretzBaseKnot}.

\begin{figure}
\begin{minipage}{2.4in}
\psfrag{l}[c]{\Huge$\vdots$}\psfrag{r}[c]{\Huge$\vdots$}\psfrag{k}[r]{\parbox{0.7in}{$2kn+1$\\[0.1cm]
half--twists}\ \ $\left\{\rule{0pt}{0.7in}\right.$}\psfrag{m}[l]{$\left.\rule{0pt}{0.7in}\right\}$\ \ \parbox{0.7in}{\ \quad$-n$\\[0.1cm]
half--twists}}
\psfrag{1}[c]{$ts$}\psfrag{2}[c]{$t$}\psfrag{3}[c]{$t$}\psfrag{4}[c]{$ts$}\psfrag{L}[c]{\Large$T$}
\includegraphics[width=2.4in]{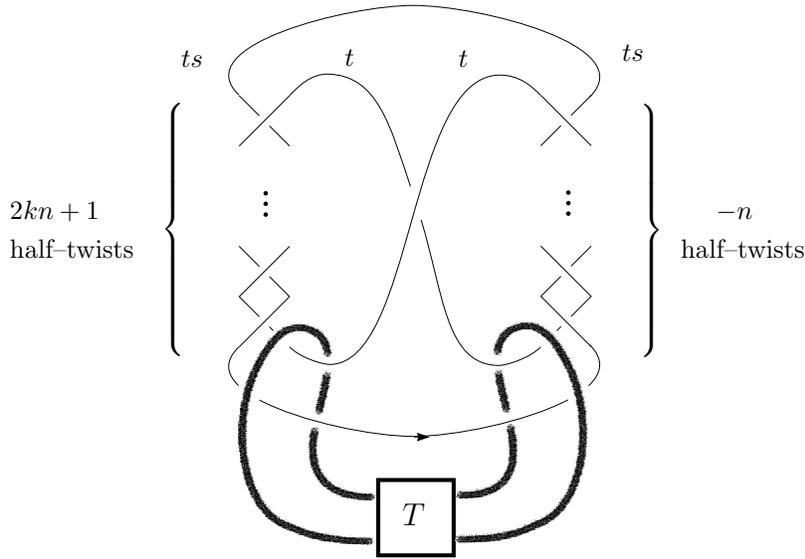}
\end{minipage}\rule{0pt}{1.5in}
\caption{\label{F:baseknot}The $\Dn$-coloured knot $(\K_k,\rho_k)$
with the surgery link in its complement.}
\end{figure}

\subsection{Constructing the cover.}\label{SS:SeifertLift}

In the previous section we proved that for any $D_{2n}$-coloured
knot $(K,\rho)$ with coloured untying invariant $0\leq k<n$ there
exists a $\pm1$--framed link $L$ in the complement of the
$\Dn$-coloured pretzel knot $(\K_k,\rho_k)$ of Figure
\ref{F:baseknot}, whose components are unknotted and in $\ker\rho$,
and surgery along which recovers $(K,\rho)$. The goal of this
section is to construct the branched dihedral covering space and
covering link corresponding to this data, and to lift the surgery
information to this cover.\par


\subsubsection{Language and Notation}
Coordinates in $\mathds{R}^3\subset S^3$ will be employed to
explicitly describe configurations of objects in $3$--space. Denote
by $\Sigma \subset \mathds{R}^2$ the surface arising from a
sufficiently large disc when small discs centred at the points
$(-2,0)$, $(-1,0)$, $(1,0)$ and $(2,0)$ are removed. The surgery
link $L$ will lie inside $\Sigma\times [0,1]\subset \mathds{R}^3$.
We will think of $L$ as being the closure of a $\pm1$
framed tangle $T$ 
such that diagram of $L$ arising from the projection onto
$\Sigma\times \{0\}$ is as pictured in Figure \ref{F:FIGA}.\par

\begin{figure}[h]
\psfrag{T}[c]{\Large$T$}\psfrag{1}[c]{$x_1$}\psfrag{2}[c]{$x_2$}
\includegraphics[width=300pt]{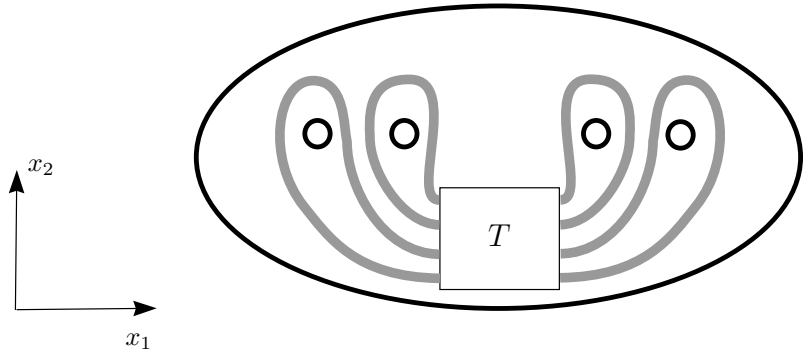}
\caption{\label{F:FIGA}A diagram of the surgery link
$L\subset\Sigma\times[0,1]$.}
\end{figure}

The knot $\K_k$ over which we'll be taking a branched dihedral cover
can be assumed to live in
$\mathds{R}^3-\left(\Sigma\times[0,1]\right)$, as pictured in Figure
\ref{F:FIGB} (using the convention that the coordinate $x_2$
increases \textit{into} the page).\par

\begin{figure}[h]
\psfrag{S}[c]{\small$\Sigma\!\times\![0,1]$}\psfrag{l}[c]{$\cdots$}\psfrag{r}[c]{$\cdots$}\psfrag{k}[c]{$2kn+1$
half-twists}\psfrag{m}[c]{$-n$ half-twists}
\psfrag{1}[c]{$x_1$}\psfrag{3}[c]{$x_3$}\psfrag{a}[c]{\rotatebox{270}{$\left\{\rule{0pt}{50pt}\right.$}}\psfrag{b}[c]{\rotatebox{270}{$\left.\rule{0pt}{50pt}\right\}$}}
\includegraphics[width=360pt]{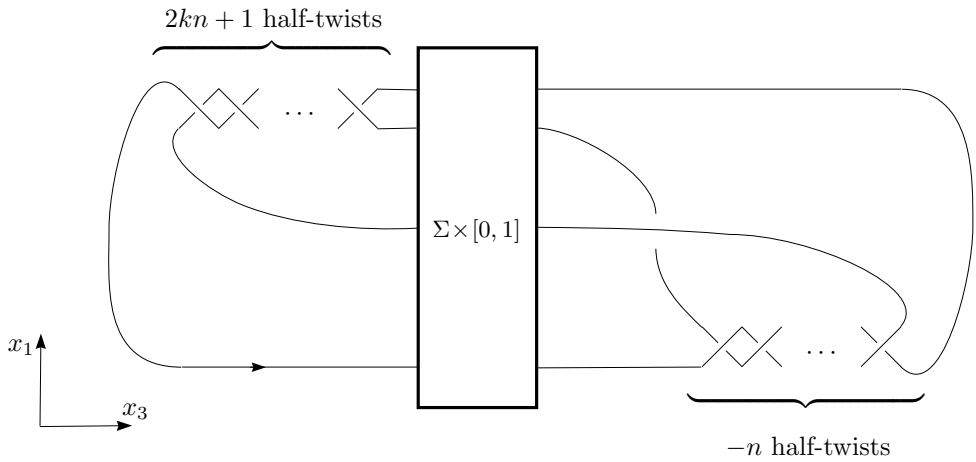}
\caption{\label{F:FIGB}The cylinder $\Sigma\times[0,1]$ sitting
inside the knot complement $\mathds{R}^3-\K_k$.}
\end{figure}

The $\Dn$-colouring $\rho$ induces a representation from
$\pi_1\left(\Sigma\times[0,1]\right)$ into $\Dn$, which we shall
also call $\rho$ by abuse of notation. To describe this
representation, choose a base point for $\Sigma\times[0,1]$ lying on
the surface $\Sigma\times\{0\}$, and specify the images of
generators for  with respect to this basepoint as shown in Figure
\ref{F:FIGC}. The two `outer' generators map to $ts$, while the two
`inner' generators map to $t$.\par

\begin{figure}[h]
\psfrag{a}[c]{$ts$}\psfrag{b}[c]{$t$}\psfrag{c}[c]{$t$}\psfrag{d}[c]{$ts$}
\includegraphics[width=200pt]{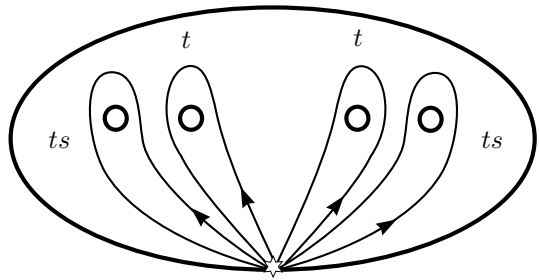}
\caption{\label{F:FIGC}Generators for
$\pi_1\left(\Sigma\times[0,1]\right)$ and their images in $\Dn$.}
\end{figure}

\subsubsection{Constructing $\covcyl$}

We now have language and notation which is sufficiently explicit to
describe the construction of $\covcyl$, the dihedral cover of
$\Sigma\times [0,1]$ with respect to $\rho$, and its embedding in
the branched dihedral covering over $(\K_k,\rho_k)$, which is $S^3$
because $\K_k$ is a $2$--bridge knot (see \textit{e.g.}
\cite{Bir76}).\par


We build $\covcyl$ embedded in $\mathbb{R}^2\times
[0,1]\subset\mathbb{R}^3$ by slotting together copies of
$\Sigma\times[0,1]$ cut open along planes, as shown in Figure
\ref{F:FIGD}. These are our ``lego blocks'', which we can bend,
stretch, and shrink. To construct $\covcyl$ (embedded in
$\mathds{R}^3$) we take $n$ blocks denoted $X_1,\ldots,X_n$ (copies
of the cut-open $\Sigma\times[0,1]$ of Figure \ref{F:FIGD}) and slot
them together in the usual way: always matching an $A$ to an $A'$,
and so on, and using the representation to decide which copy of the
$3$--cell one passes to when crossing a cut.\par

\begin{figure}[h]
\centering \psfrag{T}[c]{$T$}
\begin{minipage}[c]{0.49\textwidth}
\centering
\includegraphics[width=140pt]{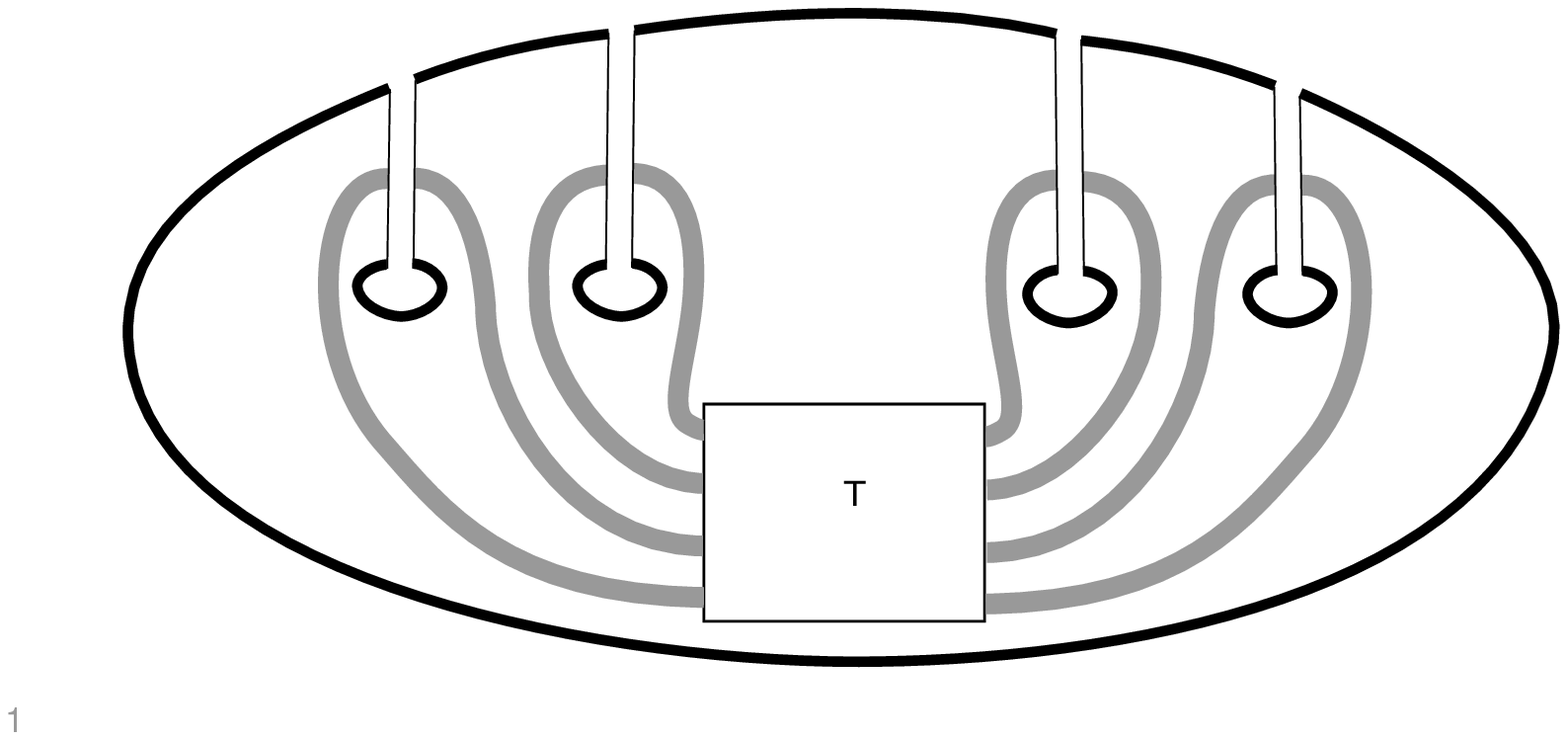}
\end{minipage}
\begin{minipage}[c]{0.49\textwidth}
\centering%
\psfrag{1}[c]{\Tiny$C\phantom{^\prime}$}
\psfrag{2}[c]{\Tiny$D\phantom{^\prime}$}
\psfrag{3}[c]{\Tiny$D^\prime$} \psfrag{4}[c]{\Tiny$C^\prime$}
\psfrag{5}[c]{\Tiny$E\phantom{^\prime}$}
\psfrag{6}[c]{\Tiny$F\phantom{^\prime}$}
\psfrag{7}[c]{\Tiny$F^\prime$} \psfrag{8}[c]{\Tiny$E^\prime$}
\psfrag{9}[c]{\Tiny$A^\prime$}\psfrag{0}[c]{\Tiny$B^\prime$}\psfrag{q}[c]{\Tiny$B\phantom{^\prime}$}
\psfrag{w}[c]{\Tiny$A\phantom{^\prime}$}\psfrag{e}[c]{\Tiny$G^\prime$}\psfrag{r}[c]{\Tiny$H^\prime$}
\psfrag{t}[c]{\Tiny$H\phantom{^\prime}$}\psfrag{y}[c]{\Tiny$G\phantom{^\prime}$}
\includegraphics[width=155pt]{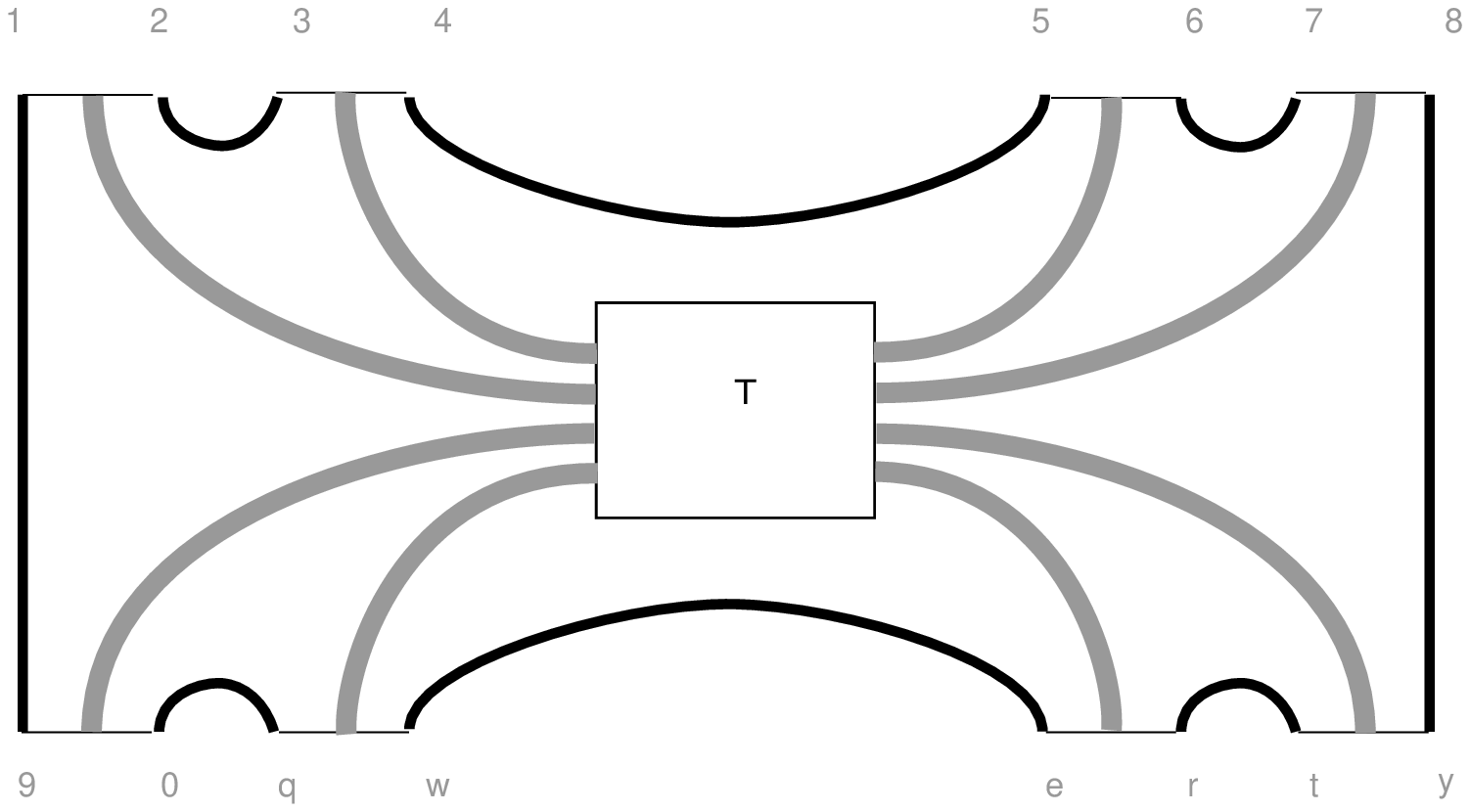}
\end{minipage}
\caption{\label{F:FIGD}A block $X_i$ obtained by cutting open
$\Sigma\times[0,1]$ along planes, and $X_i$ after being `opened out'
by isomorphism.}
\end{figure}

To see that combinatorially the blocks end up lined up in a line,
consider the graph with $n$ vertices labeled $1,\ldots,n$ and an arc
connecting vertices $i$ and $j$ if and only if $X_i$ and $X_j$ are
incident in $\covcyl$, \textit{i.e.} if and only if we slot $X_i$
and $X_j$ together, which is if and only if $t(i)=j$ or $ts(i)=j$
where $\Dn$ is acting on $1,\ldots,n$ by symmetries of the regular
$n$-gon (remember that when crossing a cut labeled $t$ we are going
to be crossing from $X_i$ to $X_{t(i)}$, and similarly for $ts$).
When $n=7$ the graph is given in Figure \ref{F:FIGH}. Because
$t(1)=1$ and $ts(\frac{n+3}{2})=\frac{n+3}{2}$, the graph will
consist of two loops and a path from $1$ to $\frac{n+3}{2}$.\par

\begin{figure}[h]
\centering
\psfrag{1}[c]{$1$}\psfrag{2}[c]{$2$}\psfrag{3}[c]{$3$}\psfrag{4}[c]{$4$}\psfrag{5}[c]{$5$}\psfrag{6}[c]{$6$}\psfrag{7}[c]{$7$}
\psfrag{a}{}\psfrag{b}{}\psfrag{c}{}\psfrag{d}{}\psfrag{e}{}\psfrag{f}{}\psfrag{g}{}\psfrag{h}{}
\psfrag{t}[c]{$t$}\psfrag{s}[l]{$ts$}
\begin{minipage}[c]{0.4\textwidth}
\centering
\includegraphics[width=140pt]{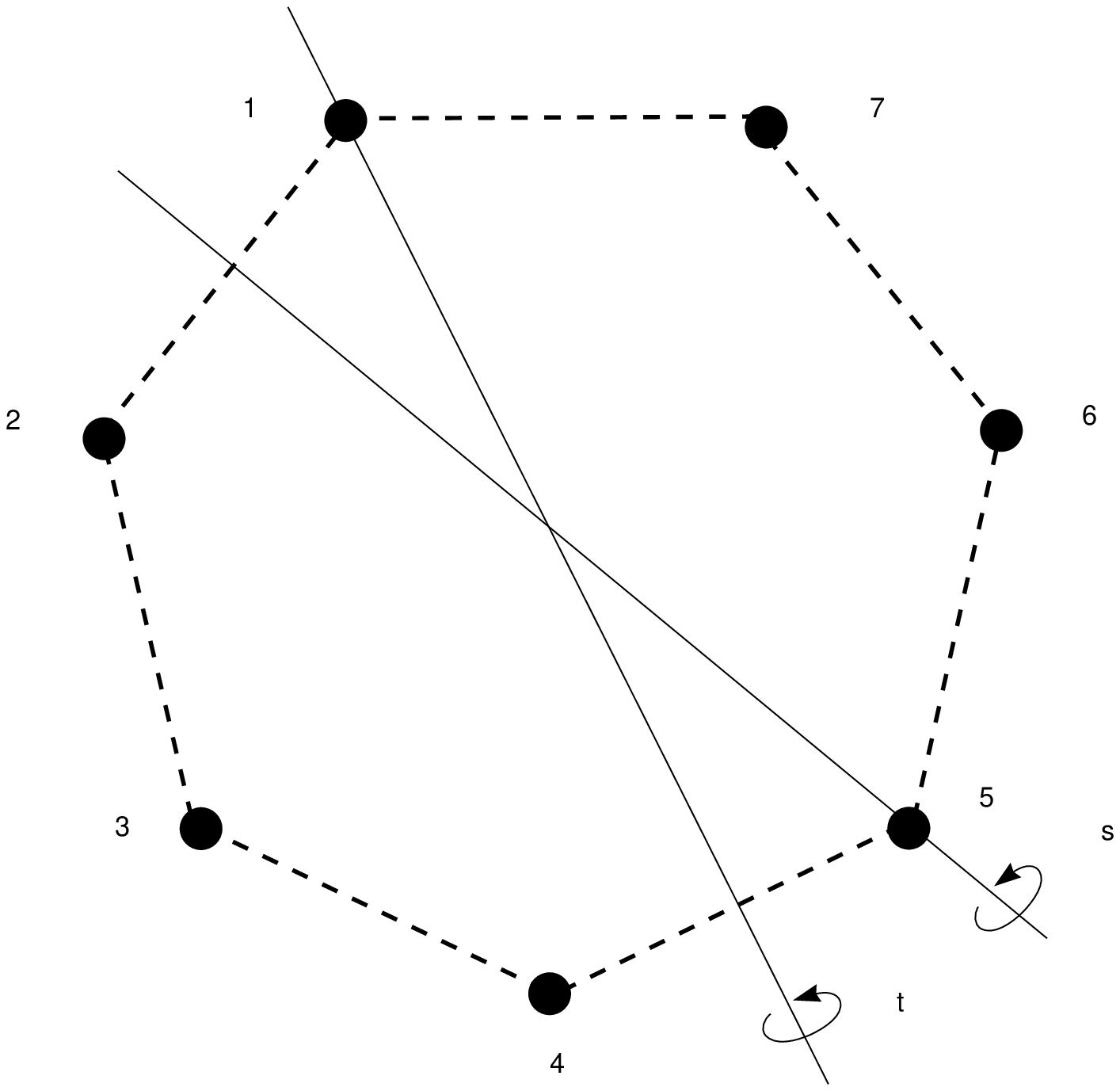}
\end{minipage}
\begin{minipage}[c]{0.13\textwidth}
\quad\includegraphics[width=30pt]{fluffyarrow}
\end{minipage}
\begin{minipage}[c]{0.4\textwidth}
\centering
\includegraphics[width=80pt]{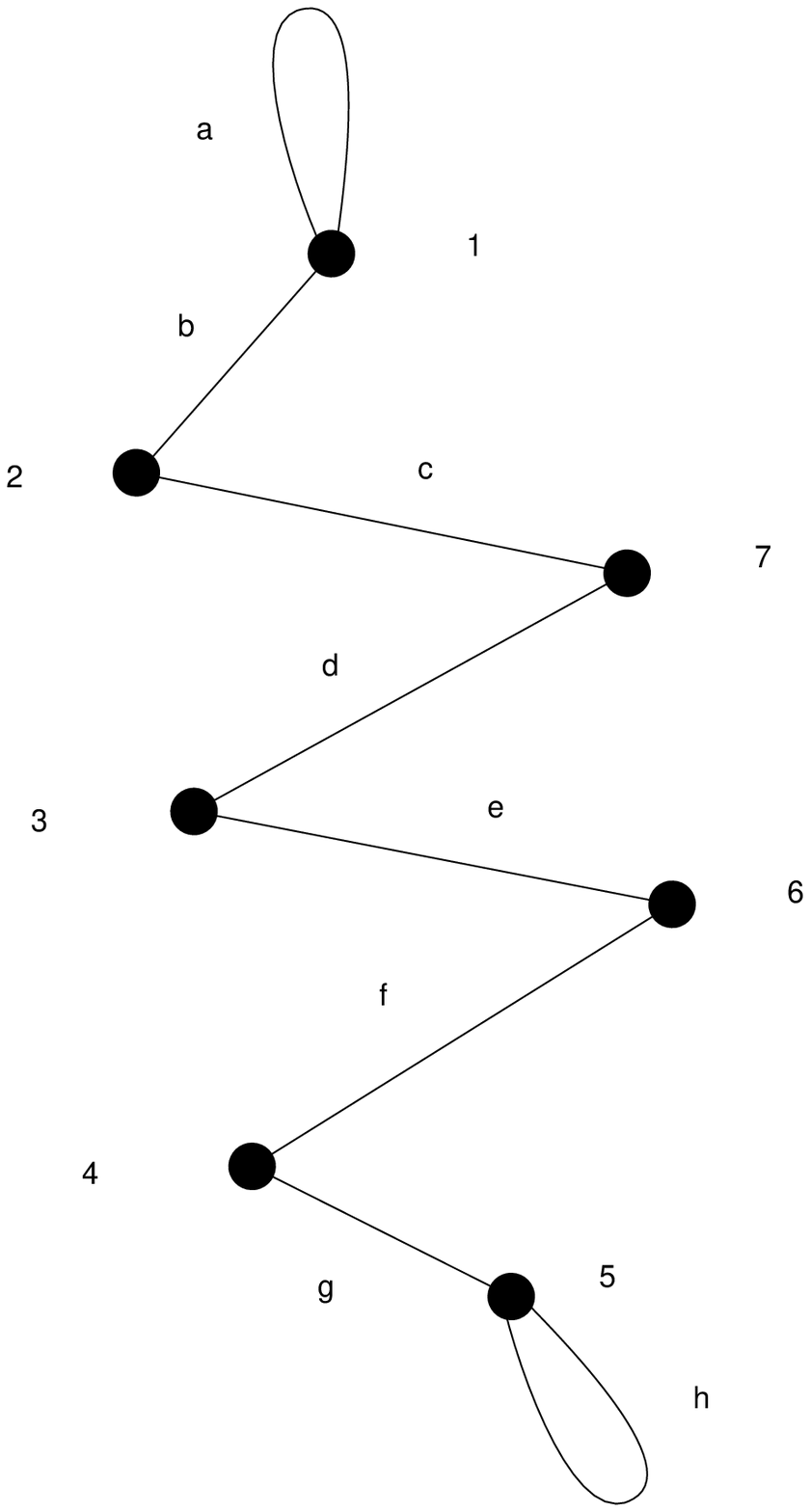}
\end{minipage}
\caption{\label{F:FIGH}The graph showing in which order the blocks
$X_i$ slot together to build $\covcyl$.}
\end{figure}

Now that we know what $\covcyl$  looks like combinatorially, we
describe its embedding in $\mathds{R}^3$ which we will use in the
presentation of the final result. For this purpose it is useful to
notice that the construction of $\covcyl$ defines a permutation
$\tau$ of $1,\ldots,n$, taking $i$ (representing $X_i$) to the
position of $X_i$ on the path from $1$ to $\frac{n+3}{2}$ (one plus
its distance on the graph from the vertex labeled $1$). Thus for
Figure \ref{F:FIGH}:
$$
\tau= \left(\begin{matrix}1 & 2 & 3 & 4 & 5 & 6 & 7\\
1 & 2 & 4 & 6 & 7 & 5 & 3
\end{matrix}\right)
$$

Now, for each $i=1,\ldots,n$, if $\tau(i)$ is even, bend the arms of
the dumbbell (Figure \ref{F:FIGD}) down and place the block $X_i$ in
$\mathds{R}^3$ the position shown in Figure \ref{F:FIGE}. If
$\tau(i)$ is even, bend the arms up and place the result in the
position shown in Figure \ref{F:FIGF}. The reader can observe that
the resulting identifications are exactly those determined by the
representation. Finish the construction by gluing up the four
remaining pairs of cuts--- the cuts next to each other around the
points $(-n-1,0,0)$, $(-1,0,0)$, $(1,0,0)$ and $(n+1,0,0)$.

\begin{figure}[h]
\centering \psfrag{T}[c]{$T$}
\psfrag{1}[c]{\Tiny$C\phantom{^\prime}$}
\psfrag{2}[c]{\Tiny$D\phantom{^\prime}$}
\psfrag{3}[c]{\Tiny$D^\prime$} \psfrag{4}[c]{\Tiny$C^\prime$}
\psfrag{5}[c]{\Tiny$E\phantom{^\prime}$}
\psfrag{6}[c]{\Tiny$F\phantom{^\prime}$}
\psfrag{7}[c]{\Tiny$F^\prime$} \psfrag{8}[c]{\Tiny$E^\prime$}
\psfrag{9}[c]{\Tiny$A^\prime$}\psfrag{0}[c]{\Tiny$B^\prime$}\psfrag{q}[c]{\Tiny$B\phantom{^\prime}$}
\psfrag{w}[c]{\Tiny$A\phantom{^\prime}$}\psfrag{e}[c]{\Tiny$G^\prime$}\psfrag{r}[c]{\Tiny$H^\prime$}
\psfrag{t}[c]{\Tiny$H\phantom{^\prime}$}\psfrag{y}[c]{\Tiny$G\phantom{^\prime}$}
\centering\psfrag{b}[c]{\Tiny$x_1=-\tau(i)-1$}\psfrag{n}[c]{\Tiny$x_1=-\tau(i)$}\psfrag{m}[c]{\Tiny$x_1=\tau(i)$}\psfrag{o}[c]{\Tiny$x_1=\tau(i)+1$}
\psfrag{z}[r]{\Tiny$x_2=-\frac{1}{2}$}\psfrag{x}[r]{\Tiny$x_2=\frac{1}{2}$}\psfrag{c}[r]{\Tiny$x_2=\tau(i)$}\psfrag{v}[r]{\Tiny$x_2=\tau(i)+1$}
\includegraphics[width=3.3in]{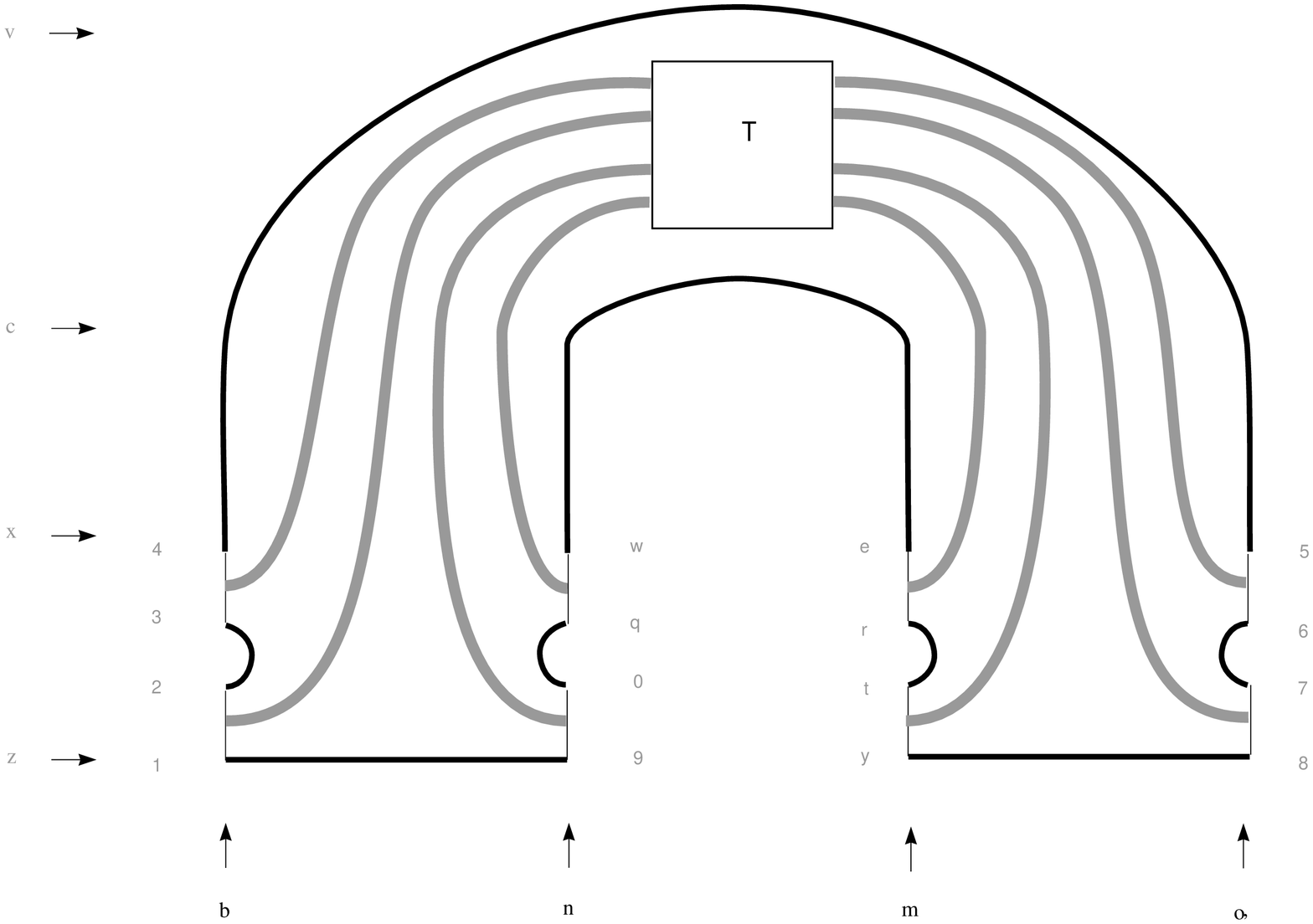}
\caption{$X_i$ with arms bent down}\label{F:FIGE}
\end{figure}

\begin{figure}[h]
\centering \psfrag{T}[c]{$T$}
\psfrag{1}[c]{\Tiny$C\phantom{^\prime}$}
\psfrag{2}[c]{\Tiny$D\phantom{^\prime}$}
\psfrag{3}[c]{\Tiny$D^\prime$} \psfrag{4}[c]{\Tiny$C^\prime$}
\psfrag{5}[c]{\Tiny$E\phantom{^\prime}$}
\psfrag{6}[c]{\Tiny$F\phantom{^\prime}$}
\psfrag{7}[c]{\Tiny$F^\prime$} \psfrag{8}[c]{\Tiny$E^\prime$}
\psfrag{9}[c]{\Tiny$A^\prime$}\psfrag{0}[c]{\Tiny$B^\prime$}\psfrag{q}[c]{\Tiny$B\phantom{^\prime}$}
\psfrag{w}[c]{\Tiny$A\phantom{^\prime}$}\psfrag{e}[c]{\Tiny$G^\prime$}\psfrag{r}[c]{\Tiny$H^\prime$}
\psfrag{t}[c]{\Tiny$H\phantom{^\prime}$}\psfrag{y}[c]{\Tiny$G\phantom{^\prime}$}
\centering\psfrag{b}[c]{\Tiny$x_1=-\tau(i)-1$}\psfrag{n}[c]{\Tiny$x_1=-\tau(i)$}\psfrag{m}[c]{\Tiny$x_1=\tau(i)$}\psfrag{o}[c]{\Tiny$x_1=\tau(i)+1$}
\psfrag{z}[r]{\Tiny$x_2=\frac{1}{2}$}\psfrag{x}[r]{\Tiny$x_2=-\frac{1}{2}$}\psfrag{c}[r]{\Tiny$x_2=-\tau(i)$}\psfrag{v}[r]{\Tiny$x_2=-\tau(i)-1$}
\includegraphics[width=3.3in]{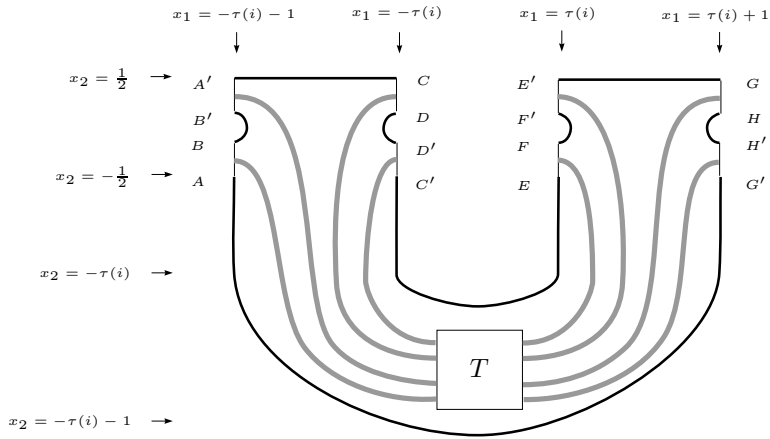}
\caption{$X_i$ with arms bent up}\label{F:FIGF}
\end{figure}

For example, the result for $n=3$ is displayed in Figure
\ref{F:FIGG}.

\begin{figure}[h]
\psfrag{T}[c]{$T$}
\includegraphics[width=300pt]{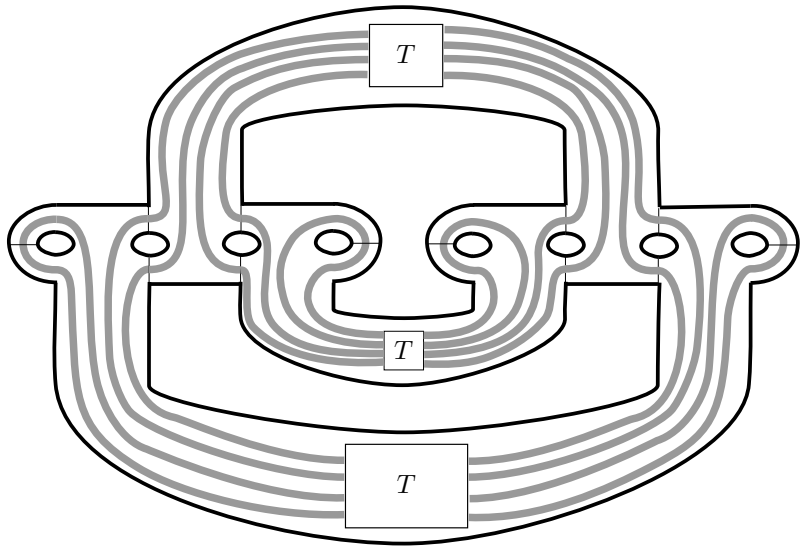}
\caption{\label{F:FIGG}$\widetilde{\Sigma\times[0,1]}$ embedded in
$\mathds{R}^3$ for $n=3$.}
\end{figure}

\subsection{The branching set}
We began this section with a framed link in $\Sigma\times[0,1]$ in
the complement of a coloured knot $\K_k$. Since $\K_k$ happens to be
a $2$--bridge knot, its dihedral covering space is $S^3$ with an
$\frac{n+1}{2}$--component covering link embedded in it (see
\textit{e.g.} \cite{Bir76}). It remains for us to describe this
link, and show how $\covcyl$ embeds into its complement.\par

To present the result we need to introduce some additional notation.
The result will use certain braids on $2(n+1)$ strands. The strands
of the braids will be indexed by the set
\[
I_n=\{-n-1\leq i\leq n+1\,,\, i\in \mathbb{Z}/\{0\}\}. \] The
coordinate $x_3$ will be the vertical coordinate of the braid, and
the projections of the endpoints of the strands to the
$(x_1,x_2)$--plane will be the points $\{(x,0),x\in I_n\}$.
 (Note that these are precisely the
coordinates of the `holes' in the construction we just gave of
$\covcyl$.)

Let $i<j$ be indices from $I_n$. Let $\Jh[i,j]$ denote the braid you
get by putting a clockwise half-twist into the group of strands
starting with the strand at position $i$, up to the strand at
position $j$. For example, if $n=4$, then $\Jh[-2,2]$ denotes the
braid shown in Figure \ref{F:FIGJ}.

\begin{figure}[h]
\psfrag{T}[c]{$T$}
\includegraphics[width=200pt]{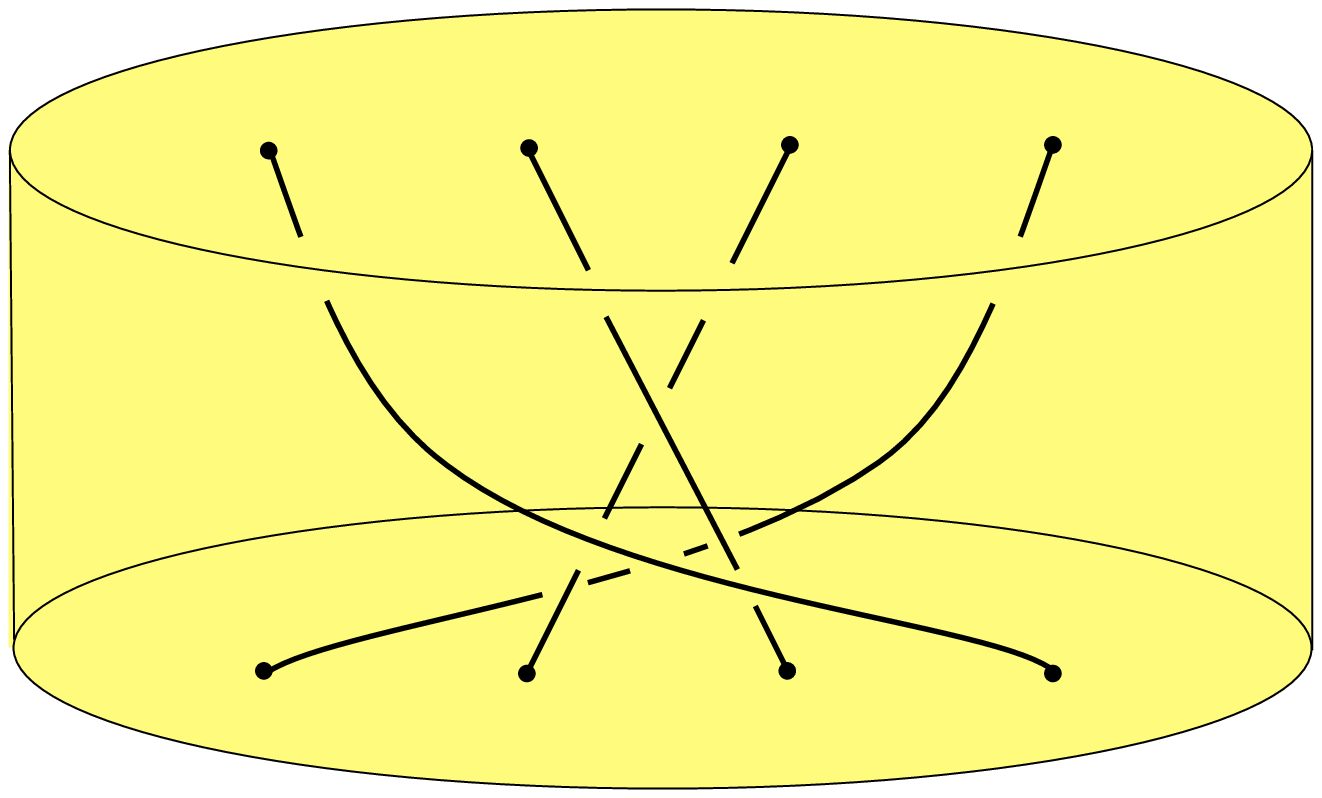}
\caption{\label{F:FIGJ} The half-twist $\Jh[-2,2]$.}
\end{figure}

We can now state the result.
\begin{thm}
Take the construction given earlier of $\covcyl$ as a subset of
$\mathds{R}^3$. The branching set over $\K_k$ lies in its complement
as shown in Figure \ref{F:FIGK}, where $B$ denotes the braid:
\[
\Jh[-n,n]\cdot\Jh[-n+1,n-1]\thinspace \cdots\thinspace
\Jh[-2,2]\cdot \Jh[-1,1].
\]
\end{thm}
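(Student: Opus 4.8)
The plan is to prove the statement by explicitly lifting the branch locus, using the fact that $\K_k$ is a $2$--bridge knot so that its branched dihedral cover is genuinely $S^3$ and the covering link is an honest link in that $S^3$. First I would decompose the complement $\overline{S^3-N(\K_k)}$ into the piece $\Sigma\times[0,1]$, whose cover $\covcyl$ has already been built and embedded in $\mathds{R}^3$ in the construction above, together with the complementary piece $P$ carrying the two pretzel twist regions (the $2kn+1$ half--twists and the $-n$ half--twists of Figure \ref{F:FIGB}). Inside $\Sigma\times[0,1]$ the knot $\K_k$ only threads the four holes of $\Sigma$ without twisting, so in $\covcyl$ these four strands simply lift to vertical strands passing through the holes at the points $(x,0)$, $x\in I_n$. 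Hence all of the nontrivial content of the branch locus is concentrated in the lift of $P$, and the proof reduces to computing that lift.

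The heart of the argument is to lift the twist regions strand by strand. Since a meridian of $\K_k$ maps under $\rho$ to a reflection, the local branching over each arc of $\K_k$ has exactly one unramified sheet (the vertex fixed by the reflection) together with $\frac{n-1}{2}$ doubly--ramified sheets; this is what produces the $2(n+1)$ strand endpoints indexed by $I_n$ and ultimately the $\frac{n+1}{2}$ components of the covering link. The key local computation is the lift of a single half--twist of the pretzel. A half--twist interchanging two adjacent holes of $\Sigma$ acts on the $n$ sheets by the permutation through which $\rho$ sends the corresponding generator, and lifting it rotates the associated block of covering strands; this is exactly a half--twist $\Jh[i,j]$ of the strands running between positions $i$ and $j$. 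Reading the positions off the embedding of $\covcyl$ (the permutation $\tau$ and the placement of blocks with arms bent up or down, Figures \ref{F:FIGE} and \ref{F:FIGF}), the successive half--twists lift to the nested half--twists $\Jh[-n,n],\Jh[-n+1,n-1],\ldots,\Jh[-1,1]$, whose composition in the order dictated by the diagram is
\[
B=\Jh[-n,n]\cdot\Jh[-n+1,n-1]\cdots\Jh[-2,2]\cdot\Jh[-1,1].
\]
One must also check that the $2kn+1$ half--twist region lifts to a braid that can be isotoped into standard position---so that it contributes no extra braiding---which is the reason the branching set is independent of $k$.

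Finally I would reassemble the global picture, gluing the lifted region $P$ back onto $\covcyl$ along the holes at $I_n$ and recording the cores of the solid tori that fill the boundary tori $T_1,\ldots,T_{\frac{n+1}{2}}$ of the unbranched cover as the components of the branching set. Matching the ramification pattern (one fixed strand, the remaining ones paired) against the diagram built from $B$ confirms that the branching set sits in the complement of $\covcyl$ exactly as drawn in Figure \ref{F:FIGK}. The hard part will be the strand--by--strand bookkeeping of the second step: keeping track of which sheet each strand occupies as it passes through a hole, of how $\rho$ reassigns sheets across a half--twist, and of how the orientation of $\K_k$ and the bent--up versus bent--down placement of the blocks fix the sign and the nesting order of each $\Jh[i,j]$. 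Pinning down the indices and the composition order so that they agree with $B$ precisely---rather than with a conjugate or an inverse---is where the genuine work lies; the remainder is either standard $2$--bridge branched--cover theory or routine cut--and--paste already set up by the construction of $\covcyl$.
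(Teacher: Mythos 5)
The paper states this theorem without any proof at all --- it is presented as a direct consequence of the preceding explicit construction of $\covcyl$ --- so there is no argument of the authors' to compare yours against. Your overall strategy (split $\overline{S^3-N(\K_k)}$ into $\Sigma\times[0,1]$ and a complementary piece carrying the twist regions, lift the twist regions, and reassemble) is certainly the intended one. Nevertheless your outline contains a concrete error and defers the one genuinely nontrivial computation.

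The error: you assert that the $2kn+1$ half--twist region ``lifts to a braid that can be isotoped into standard position'' and hence that ``the branching set is independent of $k$.'' This is false, and Figure \ref{F:FIGK} itself says so: besides $B$ it contains the boxes labelled $\Jh[-n-1,-1]$ and $\Jh[-n-1,-1]^{k}$, which are precisely the lift of that twist region (one half--twist lifting to $\Jh[-n-1,-1]$, and the remaining $2kn$ half--twists lifting to the $k$--th power). Topologically the $k$--dependence cannot disappear: the covering links of the knots $\K_k$ are distinguished for different $k$ by their covering--linkage invariants (the pairwise linking numbers of the components), which is the classical Reidemeister/Perko story, and indeed the whole point of the $\K_k$ forming $n$ distinct base-knots. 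A proof following your outline would therefore produce the wrong branching set.

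A second soft spot: your key local lemma --- that each half--twist downstairs lifts to a single half--twist $\Jh[i,j]$ on ``the associated block of covering strands'' --- does not match the strand counts. The preimage of the two strands entering a twist region consists of $n+1$ strands, whereas the factors $\Jh[-j,j]$ of $B$ involve $2j$ strands for $j=1,\dots,n$, and $\Jh[-n,n]$ spans both the positive and the negative hole positions of $I_n$, which lie over \emph{different} holes of $\Sigma$. So the identification of the lift of the $-n$ twist region with $B$ cannot be a naive factor-by-factor correspondence; it requires a global isotopy of the lifted picture into the form of Figure \ref{F:FIGK}, keeping track of the positions $\pm\tau(i),\pm(\tau(i)+1)$ and the up/down bending of the blocks. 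That is exactly the bookkeeping you explicitly postpone, so the core of the theorem remains unproved in your sketch.
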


\begin{figure}[h]
\psfrag{G}[c]{\scalebox{0.75}{$H[-p-1,-1]$}}\psfrag{H}[c]{\scalebox{0.75}{$H[-p-1,-1]^k$}}\psfrag{S}[c]{$\widetilde{\Sigma\times[0,1]}$}\psfrag{B}[c]{$B$}
\psfrag{l}[c]{$\cdots$}\psfrag{m}[c]{$\cdots$}\psfrag{n}[c]{$\cdots$}\psfrag{o}[c]{$\cdots$}\psfrag{p}[c]{$\cdots$}\psfrag{q}[c]{$\vdots$}\psfrag{r}[c]{$\vdots$}
\includegraphics[width=170pt]{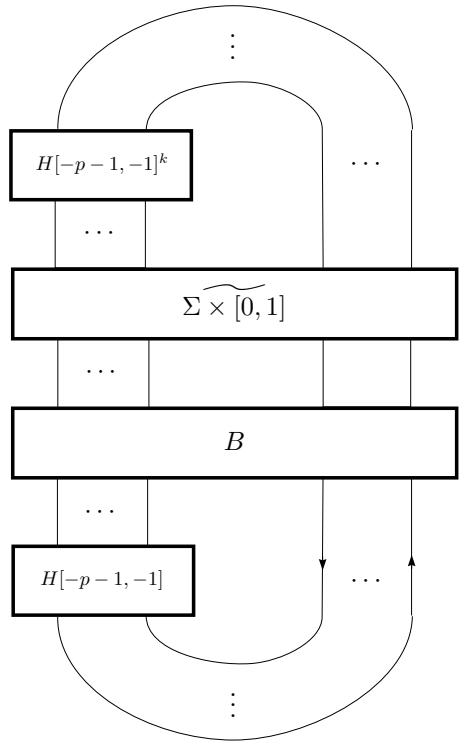}
\caption{\label{F:FIGK} The lifted picture.}
\end{figure}

\section{Odds and Ends}\label{S:oddsandends}

In this section we consider several corollaries to the constructions
given in the previous sections. In Section
\ref{SS:AdditionalBaseKnots} we list some different choices of
complete sets of base knots which we might end up with via the band
projection approach. In Section \ref{SS:Sakuma} we show how one of
these choices leads to a proof that closed $3$-manifolds with
$\Dn$-symmetry have surgery presentations with $\Dn$-symmetry.

\subsection{Different choices for a complete set of base-knots}\label{SS:AdditionalBaseKnots}

Our choice of $(\K_k,\rho_k)$ as a complete set of base-knots was
made because we have an explicit algorithm to reduce any
$\Dn$-coloured knot to one of them by surgery, and because in
addition we know how to explicitly find their branched dihedral
covering spaces, covering links, and the lifts of the surgery
presentations. This set was found by trial and error. Other complete
sets of base-knots are possible of course, and some of these have
advantages over $(\K_k,\rho_k)$.\par

Our starting point is a genus one knot with unknotted bands and with
the surface data given by Equation \ref{E:finalform}, repeated here
for the reader's convenience.

$$
(\M,\vec{v})=
\genusoneknot{kn}{\frac{n-1}{2}}{\frac{n+1}{2}}{\frac{1-n}{2}}{s}{1}
$$

\subsubsection{Linking number zero with the distinguished
component}\label{S:lknumzero} In this section we prove that we may
choose a \sds presentation such that the curves in $\ker\rho$ all
have linking number zero with the distinguished surgery component.
First perform the band slide we did in order to obtain
$(\K_k,\rho_k)$:

$$
(\M,\vec{v})=
\genusoneknot{kn}{\frac{n-1}{2}}{\frac{n+1}{2}}{\frac{1-n}{2}}{s}{1}\mapsto
\genusoneknot{kn+\frac{n+1}{2}}{0}{1}{\frac{1-n}{2}}{s}{s^{-1}}
$$

Perform $\frac{n+1}{2}$ additional surgeries between the bands:

$$
(\M,\vec{v})\mapsto
\genusoneknot{(k+1)n+1}{\frac{n+1}{2}}{\frac{n+3}{2}}{1}{s}{s^{-1}}
$$

Slide $B_2$ over $B_1$ repeatedly $\frac{n+1}{2}$ times:

\begin{figure}
\begin{minipage}{250pt}
\psfrag{L}[c]{\Large$T$}\psfrag{l}[c]{\Huge$\cdots$}\psfrag{k}[c]{\parbox{0.7in}{$k^\prime
n+m^\prime$\\half--twists}}
\psfrag{m}[c]{\rotatebox{270}{$\left\{\rule{0pt}{0.8in}\right.$}}\psfrag{s}[c]{$s$}
\psfrag{a}[c]{$ts$}\psfrag{b}[c]{$t$}
\includegraphics[width=250pt]{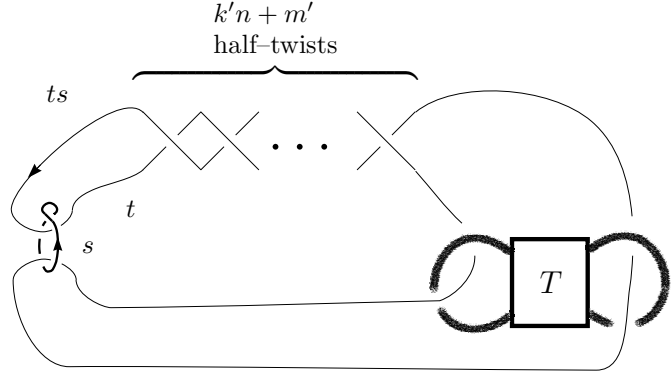}
\end{minipage}\rule{0pt}{1.1in}
\caption{\label{F:twistknot-1}Untying the twist knot.}
\end{figure}

$$
(\M,\vec{v})\mapsto
\genusoneknot{(k+1)n+m+1}{0}{1}{1}{s}{s^{-\frac{n+3}{2}}}
$$
\noindent where $k^\prime=0,\ldots,n-1$ and $m=
\frac{n+1}{2}-2\sum_{i=1}^{\frac{n+1}{2}}i$. If $\frac{n+1}{2}$ is
even, then $m=-\frac{(n+1)^2}{2}$, while if $\frac{n+1}{2}$ is odd
then $m=1-\frac{n^2+1}{2}$. This is the twist knot with $(k+1)n+m+1$
twists. Untie this knot by a single surgery as shown in Figure
\ref{F:twistknot-1}, where we redefine $k^\prime\ass k+1$ and
$m^\prime\ass m+1$. Put this into a \sds presentation by untying the
distinguished surgery component by surgery in $\ker\rho$. We obtain
a \sds presentation where all surgery components in $\ker\rho$ have
linking number zero not only with the knot, but also with the
distinguished surgery component.

\subsubsection{Torus knot presentation}\label{SSS:torusknotpresentation}

By constructing complete sets of base knots with cardinality $n$ in
previous sections, we proved Corollary \ref{C:RhoCorollary} which
states that two knots are $\rho$--equivalent if and only if they
have the same coloured untying invariant. As calculated in
\cite{Mos06b} (see also \cite{LiWal08}), the left-hand
$((2k+1)n,2)$--torus knots of Figure \ref{F:torusknot-2} are
examples of $\Dn$-coloured knots with coloured untying invariant
$k=1,\ldots,n$. Thus we have:

\begin{cor}\label{C:torusknotrepresentative}
The knots depicted in Figure \ref{F:torusknot-2} (the
$((2k+1)n,2)$--torus knots with the given colouring for
$k=1,\ldots,n$) comprise a complete set of base-knots for $\Dn$.
\end{cor}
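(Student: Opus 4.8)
The plan is to obtain this as an immediate consequence of Corollary \ref{C:RhoCorollary} together with the coloured untying invariant computation quoted just before the statement. The key structural point is that a family of $\Dn$-coloured knots forms a complete set of base-knots as soon as it meets every $\rho$-equivalence class: being $\rho$-equivalent means being joined by a sequence of surgeries on $\pm1$-framed unknots in $\ker\rho$, and such surgeries are a special case of surgery in $\ker\rho$, so if $(K,\rho)$ is $\rho$-equivalent to a knot $(T,\rho_T)$ in the family then $(K,\rho)$ is in particular obtained from $(T,\rho_T)$ by surgery in $\ker\rho_T$.

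First I would recall from Corollary \ref{C:RhoCorollary} that there are exactly $n$ $\rho$-equivalence classes of $\Dn$-coloured knots, and that these classes are precisely the fibres of the coloured untying invariant $\mathrm{cu}\in\pZ$, which attains each of its $n$ possible values. Consequently it suffices to verify that, as $k$ runs through $1,\ldots,n$, the invariants $\mathrm{cu}$ of the torus knots of Figure \ref{F:torusknot-2} run through all of $\pZ$.

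Next I would import the calculation recorded in \cite{Mos06b} (see also \cite{LiWal08}): the $((2k+1)n,2)$-torus knot, equipped with the colouring shown (whose arc-labels $t$ and $ts$ generate $\Dn$, so that it is a bona fide $\Dn$-coloured knot), has coloured untying invariant equal to $k$. The invariant is evaluated from a genus-one Seifert matrix $\M$ and its colouring vector via $\mathrm{cu}=\frac{2(\vec{w}^{\,T}\cdot\M\cdot\vec{w})}{n}\bmod n$, exactly as in the band projection approach. Because $1,2,\ldots,n$ reduce modulo $n$ to all $n$ elements of $\pZ$, these knots realize every value of $\mathrm{cu}$.

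Combining the two ingredients finishes the argument: the $n$ torus knots carry $n$ distinct values of $\mathrm{cu}$, hence lie in $n$ distinct $\rho$-equivalence classes, and since there are only $n$ such classes they represent each one exactly once. Thus an arbitrary $\Dn$-coloured knot in $S^3$ is $\rho$-equivalent to precisely one of them, and therefore obtainable from it by surgery in $\ker\rho$, which is the defining property of a complete set of base-knots. The only piece of genuine content is the invariant computation; it is the main obstacle only in the sense that it is cited rather than reproduced here, whereas everything else is a counting argument forced by Corollary \ref{C:RhoCorollary}.
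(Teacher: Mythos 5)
Your argument is correct and is essentially the paper's own: the text immediately preceding the corollary derives it by combining Corollary \ref{C:RhoCorollary} with the computation from \cite{Mos06b} that the $((2k+1)n,2)$--torus knots realize the coloured untying invariant values $k=1,\ldots,n$, exactly as you do. Your added remark that $\rho$--equivalence is a special case of surgery in $\ker\rho$ (so that hitting every $\rho$--equivalence class suffices for completeness) is a point the paper leaves implicit but is the right justification.
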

\begin{figure}
\begin{minipage}{210pt}
\psfrag{l}[c]{\Huge$\cdots$}\psfrag{k}[c]{\parbox{0.7in}{\quad\ \ $(2k+1)n$\\[0.05cm]
half--twists\\}}
\psfrag{m}[c]{\rotatebox{90}{$\left\{\rule{0pt}{0.9in}\right.$}}\psfrag{s}{$ts$}\psfrag{t}{$t$}
\includegraphics[width=210pt]{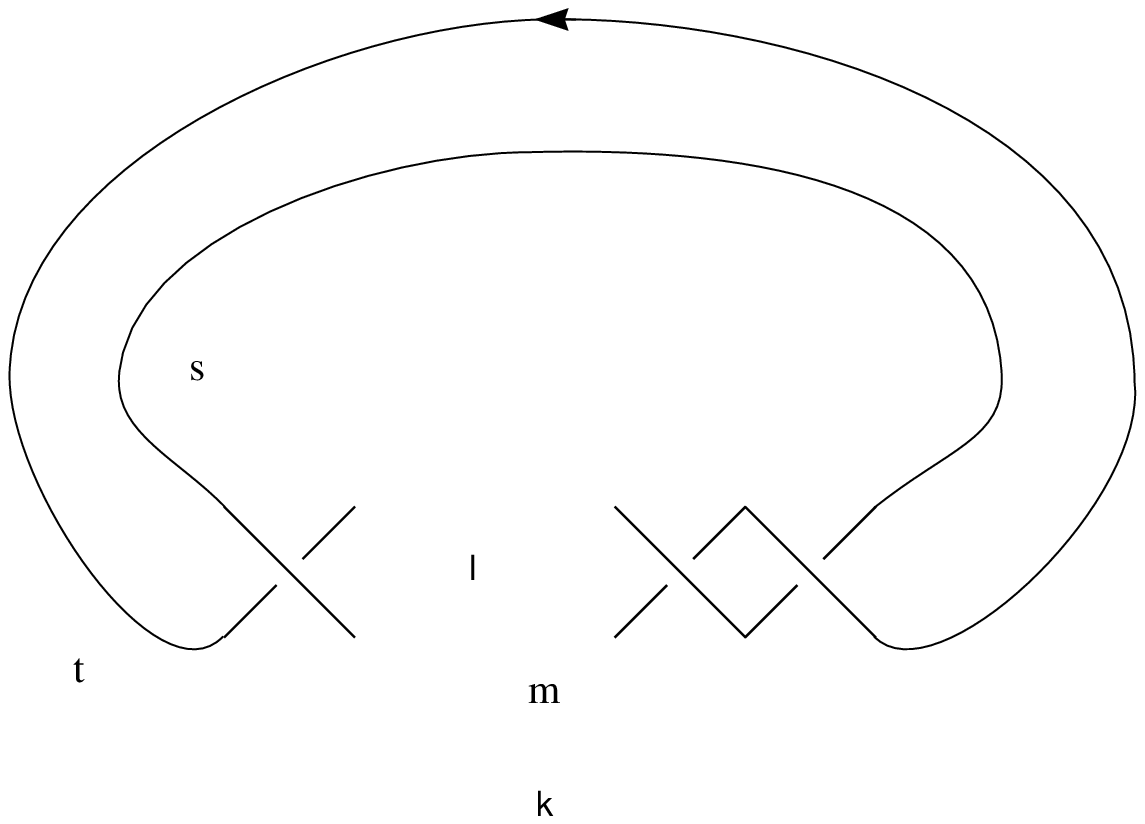}
\end{minipage}\rule{0pt}{1in}
\caption{\label{F:torusknot-2}Torus knots as base knots.}
\end{figure}

The surgery presentation of the branched dihedral covering and of
the covering link which this picture gives is:

$$
\begin{minipage}{150pt}
\psfrag{h}[c]{$B^k$}\psfrag{a}[c]{}
\includegraphics[width=150pt]{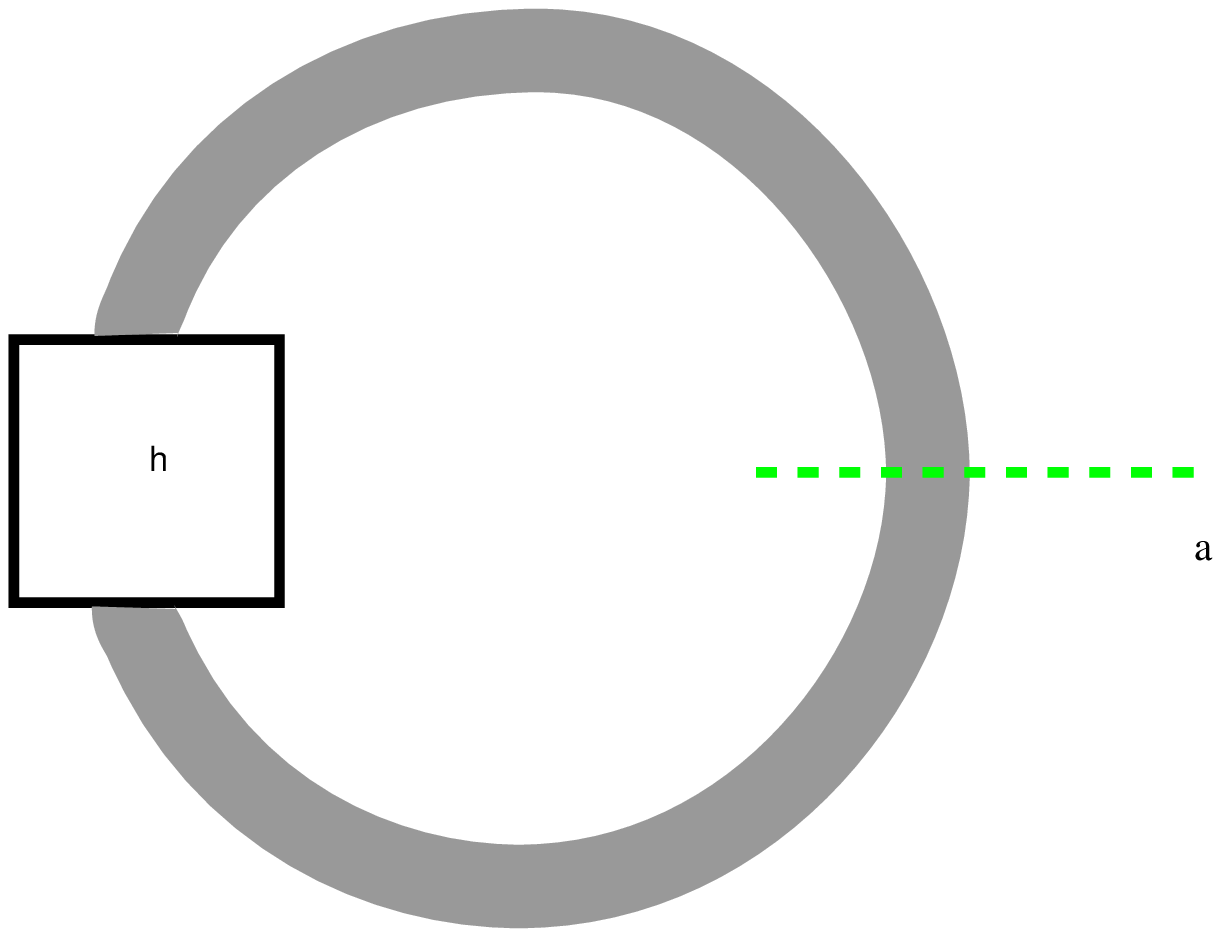}
\end{minipage}
$$
\noindent with the thick line denoting $n+1$ parallel strands and
with

$$
\begin{minipage}{200pt}
\psfrag{1}[c]{L}\psfrag{2}[c]{L}\psfrag{3}[c]{L}\psfrag{4}[c]{L}\psfrag{d}[c]{\Huge$\cdots$}
\includegraphics[width=200pt]{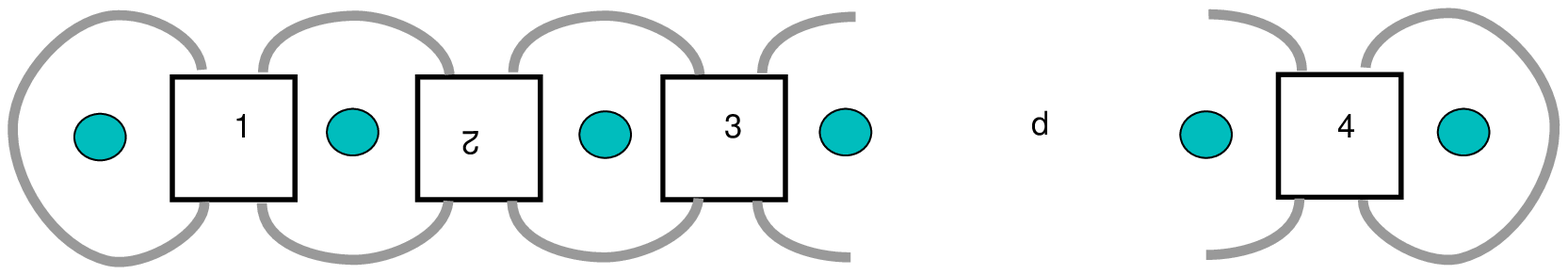}
\end{minipage}
$$
 \noindent being the lift of the covering link, slotted into the lift of the torus knot at the dotted line, where the strands of
 the covering link of the torus knot thread up out of the page
 through the holes indicated.

\subsubsection{One knot, different
colourings}\label{SSS:OneKnotManyColour} We can choose a complete
set of base-knots as a fixed knot $K$ whose colouring varies. Use
\ref{E:RR} to kill $a_{22}$ and then for $a_{11}=kn$ slide $B_2$
over $B_1$ counterclockwise repeatedly $k$ times. We obtain:
\begin{equation}
(\M,\vec{v})\mapsto
\genusoneknot{0}{\frac{n-1}{2}}{\frac{n+1}{2}}{0}{s^{\phantom{k}}}{s^k}
\end{equation}
\noindent Since the coloured Seifert matrix uniquely characterizes a
$\Dn$-coloured knot modulo $\rho$--equivalence, this gives a minimal
complete set of base-knots, as in Figure \ref{F:1knot8col}.

\subsection{Visualizing dihedral actions on
manifolds}\label{SS:Sakuma} The following section deals with an
observation due to Makoto Sakuma, that Corollary
\ref{C:torusknotrepresentative} implies a visualization theorem for
$\Dn$ actions on manifolds. We summarize his argument, essentially
contained in \cite{Sak01}.\par

Let $\Dn$ act on a closed oriented connected $3$--manifold $M$ via
orientation preserving diffeomorphisms $\mathbf{f}:=(f_{t},f_{s})$
where $f_{t}^{2}=f_{s}^{n}=1$, and $f_{t}f_{s}f_{t}=f_{s}^{n-1}$.
Actually the assumption that $f(t)$ and $f(s)$ are smooth may be
replaced by the weaker assumption that they be locally linear
\cite[Remark 2.3]{Sak01}. Viewing the $3$--sphere as a one point
compactification of $\mathds{R}^3$, the claim is then that $M$ has a
surgery presentation $L\subset S^3$ such that $L$ is invariant under
$\frac{2\pi}{n}$ rotation around the $Z$--axis and under $\pi$
rotation around the $X$--axis as a framed link.\par

The proof is by taking the quotient smooth orbifold $\Or\ass M/\Dn$
(see \textit{e.g.} \cite[Section 2.1]{CHK00}), with singular set
$\Sigma$. So $\mathrm{pr}\co M\twoheadrightarrow\Or$ is a $2n$--fold
regular dihedral covering space (see \textit{e.g.} \cite{Rol90})
with monodromy given by a representation $\psi\co
\pi_{1}(\Or-\Sigma)\twoheadrightarrow \Dn$ induced by the action of
$\mathbf{f}$. The idea is to construct a surgery link $\mathcal{L}$
to make the following diagram commute:

\qquad\qquad\qquad\qquad\qquad\qquad
\begin{equation}
\begin{CD}%
\ @. M  @<\mathrm{surg}(\tilde{\mathcal{L}})<< S^{3} @. \hspace{-13pt}\supset\tilde{\mathcal{L}}\\
@. @V\mathrm{pr}_{\psi}VV     @VV\mathrm{pr}_{\rho_\mathfrak{t}}V \\
\Sigma\subset\ @. \Or @<<\mathrm{surg}(\mathcal{L})< S^3 @. \ \ \
\supset \mathcal{L}\cup\mathfrak{t}((2k+1)n,2)
\end{CD}
\end{equation}

\noindent where $\mathrm{surg}(-)$ performs surgery by its argument
(note that this is not a map), and $\Sigma$ and
$\mathfrak{t}((2k+1)n,2)$ are the covering loci. The lifted link
$\tilde{\mathcal{L}}$ will then have the required dihedral symmetry
by construction, inherited from the dihedral symmetry of
$\mathfrak{t}((2k+1)n,2)$ lying symmetrically along a torus.\par

The link $\mathcal{L}$ is constructed as the combination of two
framed links $\mathcal{L}_1\cup\mathcal{L}_2$ such that
\begin{enumerate}
\item The sublink $\mathcal{L}_1$ is in $\ker\rho$, its components are $\pm1$--framed and are unknotted, and
$\mathrm{surg}(\mathcal{L}_1)\co S^3\Too S^3$ takes
$(\mathfrak{t}((2k+1)n,2),\rho_\mathfrak{t})$ to some $\Dn$-coloured
knot
$(K^\prime,\rho^\prime)$.%
\item For the sublink $\mathcal{L}_2$, the procedure $\mathrm{surg}(\mathcal{L}_2)\co
S^3\Too \Or$ takes $(K^\prime,\rho^\prime)$ to $(\Sigma,\psi)$.
\end{enumerate}

The sublink $\mathcal{L}_1$ is given to us by Corollary
\ref{C:torusknotrepresentative}, while $\mathcal{L}_2$ may be
constructed in complete analogy with \cite[Pages 383--384 and
Section 4]{Sak01}.

\bibliographystyle{amsplain}

\begin{thebibliography}{99}

\bibitem{Bir76} J.S. Birman, \emph{A note on the construction of simply-connected $3$--manifolds as branched covering spaces of
${S}^3$}, Proc. Amer. Math. Soc. \textbf{55} (March, 1976), no. 2,
440--442.

\bibitem{BZ03} G. Burde and H. Zieschang, \emph{Knots}, second
revised and extended ed., de Gruyter Studies  in Mathematics, vol.
5, Walter de Gruyter, Berlin and New York, 2003.

\bibitem{CS75} S.E. Cappell and J.L. Shaneson, \emph{Invariants of
$3$--manifolds}, Bull. Amer. Math. Soc. \textbf{81} (1975), no. 3,
559--562.

\bibitem{CS84} \bysame, \emph{Linking numbers in branched covers}, Contemporary Mathematics, vol. 35,
American Mathematical Society, 1984, pp. 165--179.

\bibitem{CHK00} D. Cooper, C. Hodgson and S.P. Kerckhoff, \emph{Three--dimensional orbifolds and
cone--manifolds}, MSJ Memoirs, vol. 5, Mathematical Society of
Japan, 2000.

\bibitem{Fox62} R.H. Fox, \emph{A quick trip through knot theory}, Topology of $3$--Manifolds and Related Topics (Georgia 1961) (M.K. Fort, ed.),
Prentice-Hall, 1962.

\bibitem{Gil93} P. Gilmer, \emph{Classical knot and link
concordance}, Comment. Math. Helvetici \textbf{68} (1993), 1--19.

\bibitem{GK03} S. Garoufalidis and A. Kricker, \emph{A surgery view of boundary links}, Math.
Ann. \textbf{327} (2003), 103--115, arXiv:math.GT/0205328.

\bibitem{GK03b} \bysame, \emph{Finite type invariants of cyclic branched
covers}, Topology \textbf{43} (2004), 1247--1283,
arXiv:math.GT/0107220.

\bibitem{GK04} \bysame, \emph{A rational non-commutative invariant of boundary links}, Geom.
Topol. \textbf{8} (2004), 115--204, arXiv:math.GT/0105028.

\bibitem{Hab00} K. Habiro, \emph{Claspers and finite type invariants of links}, Geom.
Topol. \textbf{4} (2000), 1--83.

\bibitem{LiWal08} R.A. Litherland and S.D. Wallace, \emph{Surgery description of colored
knots}, 2007, arXiv:math.GT/0709.1507.

\bibitem{Mat87} S. Matveev, \emph{Generalized surgeries of three--dimensional manifolds and representations of homology spheres}, Mat.
Zametki, \textbf{42} (1987), no. 2, 268--278.

\bibitem{Mos06b} D. Moskovich, \emph{Surgery untying of coloured
knots}, Algebr. Geom. Topol. \textbf{6} (2006), 673--697,
arXiv:math.GT/0506541.

\bibitem{MN89} H. Murakami and Y. Nakanishi, \emph{On a certain move generating
link--homology}, Math.Ann. \textbf{284} (1989), 75--89.

\bibitem{NS03} S. Naik and T. Stanford, \emph{A move on diagrams that generates S-equivalence of
knots}, J. Knot Theory Ramifications \textbf{12} (2003), no. 5,
717--724.

\bibitem{Per74} K.A. Perko, \emph{On the classification of knots}, Proc. Amer. Math. Soc. \textbf{45} (1974),
262--266.

\bibitem{PrzS01} J.H. Przytycki and M. Sokolov, \emph{Surgeries on periodic links and homology of periodic
$3$--manifolds}, Math. Proc. Cambridge Philos. Soc. \textbf{131}
(2001), no. 2, 295--307, Corrections available from
\verb+http://sokolov.ws/math_page/correction.ps+.

\bibitem{Rei29} K. Reidemeister, \emph{Knoten und verkettungen},
Math. Z. \textbf{29} (1929), 713--729 (German).

\bibitem{Rol90} D. Rolfsen, \emph{Knots and links}, Publish or Perish, Inc.,
Berkeley, 1990, second printing.

\bibitem{Sak01} M. Sakuma, \emph{Surgery description of orientation--preserving periodic maps on compact orientable $3$--manifolds},
Rend. Istit. Mat. Univ. Trieste \textbf{32} (2001), 375--396, Suppl.
1.

\bibitem{ST34} H. Seifert and W. Threlfall, \emph{Lehrbuch der
topologie}, Tuebner, Leipzig, 1934. Translated into English as
\emph{A textbook of topology}, Academic Press, 1980.

\bibitem{Swe04} F.J. Swenton, \emph{Algorithmic construction of Kirby diagrams for branched
covers}, J. Knot Theory Ramifications \textbf{13} (2004), no. 7,
939--945.

\bibitem{Yam02} T. Yamada, \emph{Translation algorithms between the covering presentation and other presentations of $3$--manifolds},
Master's thesis, Tokyo Institute of Technology, 2002.

\end{thebibliography}

\end{document}